\documentclass[a4paper,11pt]{article}

\usepackage[utf8]{inputenc} 
\usepackage[T1]{fontenc}
\usepackage[english]{babel}
\usepackage{amsmath, amsfonts, amsthm,amssymb,here,dsfont, hyperref}
\usepackage{mathtools}
\usepackage{graphicx,color,subfigure,multirow, here}
\usepackage{natbib}
\usepackage{enumitem}
\usepackage{comment}
\usepackage{amsfonts}
\usepackage{float}

\newtheorem{theo}{Theorem}[section]

\newtheorem{prop}[theo]{Proposition}

\newtheorem{coro}[theo]{Corollary}
\newtheorem{lemma}[theo]{Lemma}

\newtheorem{assumption}[theo]{Assumption}

\newcommand{\w}{\widehat}

\title{Nonparametric estimation of the diffusion coefficient of an ergodic diffusion process on non-compact supports under Osgood's conditions}
\author{Eddy-Michel Ella-Mintsa
\\
\small{Institut de Recherche Technologique, CENAREST, Gabon}}

\begin{document}

\maketitle

\begin{abstract}
    In this paper, we study the nonparametric estimation of the squared diffusion coefficient of an ergodic diffusion process on non-compact supports, when the two coefficients of the stochastic differential equation belong to a space larger than the space of H\"older continuous functions. The estimators are constructed by projection onto finite-dimensional spaces, thereby avoiding truncation of the dimension. We establish risk bounds of non adaptive estimators and explicit rates of convergence for bounded and unbounded diffusion coefficients. A model selection procedure is performed, followed by the derivation of risk bounds using Talagrand's inequality. The theoretical results are completed with a numerical study over simulated data.
\end{abstract}

\vspace*{0.25cm} 

\noindent {\bf Keywords}: Nonparametric estimation; Ergodic diffusion processes, Non-compact support; Rates of convergence, Model selection\\

\noindent MSC: 62G05 · 62M05 · 60J60\\

\section{Introduction}\label{sec:intro}

Consider $l \in \mathbb{R}\cup\{-\infty\}$ and a diffusion process $X = (X_t)_{t \geq 0}$ taking values in the interval $(l, +\infty)$ solution of the following stochastic differential equation:
\begin{equation}\label{eq:model}
    dX_t = b(X_t)dt + \sigma(X_t)dW_t, ~~ t \geq 0, 
\end{equation}
where $W = (W_t)_{t \geq 0}$ is the standard Brownian motion and $X_0$ is independent of $W$. The coefficients $b$ and $\sigma$ are assumed to be unknown measurable functions on the interval $(l, +\infty)$, and the process $X$ is defined on a filtered probability space $\left(\Omega, \mathcal{F}, (\mathcal{F}_t^X)_{t \geq 0},  \mathbb{P}\right)$ where $(\mathcal{F}_t^X)_{t \geq 0}$ is its natural filtration. This paper addresses the construction of nonparametric estimators of $\sigma^2$ on non-compact intervals $I = (l, +\infty)$, where the unknown coefficients $b$ and $\sigma$ belong to a space larger than the space of $\alpha$-H\"older continuous functions with $\alpha \in [1/2, 1]$. The estimators are built from a discrete observation $\{X_{k\Delta_n}, ~ k = 0, \ldots, n\}$ of the process $X$, where $\Delta_n$ is the time step such that $\Delta_n \rightarrow 0$ and $n\Delta_n \rightarrow \infty$ as $n \rightarrow \infty$ (high frequency data). We consider projection estimators of $\sigma^2$ avoiding truncation of the dimension of the approximation spaces. This setting requires the research of stable regimes on the dynamics of $X$ for which consistent estimators can be built with explicit rates of convergence, the diffusion coefficient $\sigma$ being bounded or not. These regimes are the result of a balance between the dissipative effect induced by the drift coefficient and the diffusion regime characterized by the diffusion coefficient. 

\subsection{Motivation and related works}

Statistical estimation of the coefficients of a diffusion process is crucial for solving problems in fields such as quantitative finance and economics for the valuation of financial assets, interest rates or value-at-risk (see, \textit{e.g.}, \cite{cox1985theory}, \cite{vasicek1977equilibrium}, \cite{ait1996testing} \cite{ait2007maximum}), biology or ecology to describe population dynamics or population genetics (see, \textit{e.g.}, \cite{dennis1991estimation}, \cite{mao2003asymptotic}, \cite{crow2017introduction},), epidemiology to study disease spread (see, \textit{e.g.}, \cite{Allen2017APO}, \cite{dale2018estimating}, \cite{cai2019stochastic}), or physics for the study of the dynamics of self-propelled particles (see, \textit{e.g.}, \cite{romanczuk2012active}). 

The diffusion coefficient estimation from high frequency observations of an ergodic process is a common topic in the literature. The estimators are built from either kernel approach (see \cite{fan2003reexamination}, \cite{bandi2003fully}, \cite{kutoyants2004statistical}) or by minimizing a least-squares contrast (see \cite{comte2007penalized}, \cite{schmisser2019non}). Focusing on the estimation of the diffusion coefficient on a non-compact domain, the only estimators proposed are constructed using the kernel method (see \cite{bandi2003fully}). The construction of these estimators is made possible by the use of the local time of the process. However, they do not adapt as easily as least squares contrast estimators and are highly dependent on the choice of the local bandwidth. The goal of the present paper is to investigate projection estimation of $\sigma^2$ on non-compact supports. This approach consists in projecting the unknown function $f$ onto a finite-dimensional space spanned by a basis of functions defined on a non-compact interval of $\mathbb{R}$ (see, \textit{e.g.},  \citeauthor{comte2020nonparametric}(\citeyear{comte2020nonparametric}, \citeyear{comte2021drift})), or a basis of compactly supported functions, the estimation being performed on the growing compact interval $[-A_n, A_n]$ with $A_n > 0$ and $A_n \rightarrow \infty$ as $n \rightarrow \infty$, with the supports of the basis functions dilated to $[-A_n, A_n]$, and the estimation risk on $\mathbb{R}\setminus [-A_n, A_n]$ proven to be negligible with respect to the expected rate of convergence (see \cite{denis2024nonparametric}, \citeauthor{ella2024nonparametric}(\citeyear{ella2024nonparametric}, \citeyear{ella2025minimax})). However, a drawback arises when the estimation interval is unbounded; one must control the inverse of the minimum eigenvalue $\|\Psi_m^{-1}\|_{\mathrm{op}}$ of the Gram matrix $\Psi_m$ of the chosen basis of dimension $m$. This control is crucial for deriving the risk bounds of both non-adaptive and adaptive estimators. In the current literature, the required condition on $\|\Psi_m^{-1}\|_{\mathrm{op}}$ is often assumed, leading to estimators constructed via dimension truncation of the approximation spaces (see \citeauthor{comte2020nonparametric}(\citeyear{comte2020nonparametric}, \citeyear{comte2021drift})). More recently, it has been proven under restrictive assumptions on the diffusion model (see \cite{denis2024nonparametric}, \cite{ella2025minimax}), where the approximation of the underlying space is based on compactly supported bases as described above. In the present paper, we establish an upper bound for $\|\Psi_m^{-1}\|_{\mathrm{op}}$ in the case of the Hermite and Laguerre bases by exploiting their respective analytic properties (see \cite{askey1965mean} or \cite{thangavelu1993lectures}). 

\subsection{Main results}

To our knowledge, only \cite{comte2007penalized} studies least squares contrast estimators of the diffusion coefficient of a continuous and ergodic process on a compact interval. The authors derived an optimal rate of order $n^{-s/(2s+1)}$ over a Besov space of smoothness parameter $s > 0$. We extend the study to the estimation of $\sigma^2$ on non-compact supports under weak Osgood-type regularity assumptions on the coefficients, we establish the existence, uniqueness and ergodicity of the underlying diffusion process. The main results of the paper are as follows.
\begin{enumerate}
    \item Establishment of upper bounds of $\mathcal{L}(m)\left\|\Psi_m^{-1}\right\|_{\mathrm{op}}$ where $\Psi_m$ is the Gram matrix of the Hermite basis or the Laguerre basis of dimension $m \geq 1$ and $\mathcal{L}(m) := \underset{x \in (l, +\infty)}{\sup}{\sum_{i=0}^{m-1}\phi_i^2(x)}$, with $(\phi_0, \ldots, \phi_{m-1}$) a generic basis that represents the Hermite basis and the Laguerre basis. This result is crucial as it allows us to avoid dimension truncation in the approximation spaces and to derive risk bounds for projection estimators. The derivation of these bounds requires the diffusion dynamics to lie in a stable regime characterized by a balance between the dissipativity of the drift coefficient $b$ and the diffusion intensity. This leads to the study of both exponential ergodicity and polynomial ergodicity of the diffusion process. 
    \item Derivation of risk bounds and a convergence rate of order $n^{-s/(2s+d_0)}$, with $s>d_0>1$, when $b$ and $\sigma$ are coefficients of a polynomially ergodic process, $\sigma^2$ is globally elliptic and belongs to $\mathcal{W}_{\pi}^s((l,+\infty), R_0) \cap \mathcal{C}^{0,\alpha}((l, +\infty), \mathbb{R})$, where $\mathcal{W}_{\pi}^s((l,+\infty), R_0)$ is a Sobolev space of smoothness parameter $s>0$ and $\mathcal{C}^{0,\alpha}((0, +\infty), \mathbb{R})$ is a space of $\alpha$-H\"older continuous fonctions with $\alpha \in [1/2,1]$. For exponentially ergodic processes with an unbounded diffusion coefficient $\sigma$ that exhibit polynomial growth near infinity, we establish an explicit convergence rate of order $n^{-s/(2s+k_0)}$, where $s>k_0$ and $k_0  >  1$ is a constant. The association of a globally elliptic $\sigma$ with a polynomially ergodic process, and of an unbounded $\sigma$ with an exponentially ergodic process, is motivated by stability considerations in the dynamics of the process, which are required to control $\mathcal{L}(m)\|\Psi_m^{-1}\|_{\mathrm{op}}$.
    \item Model selection and derivation of risk bounds for each ergodicity regime using Talagrand's inequality. 
\end{enumerate}

\subsection{Outline of the paper}

Section~\ref{sec:framework} is devoted to the definition of the general framework for the estimation of $\sigma^2$, and Section~\ref{sec:projection} focuses on the construction of projection estimators of $\sigma^2$ and the derivation of some key results. In Sections~\ref{sec:consistency-rates} and \ref{sec:Adaptive}, the risk bounds of both non-adaptive and adaptive estimators of $\sigma^2$ are established, and Section~\ref{sec:Numerical-study} is devoted to a numerical study. Finally, the proofs of the main results are provided in Section~\ref{sec:proofs} and the appendix.

\section{General framework and assumptions}
\label{sec:framework}

The present section addresses the definition of a statistical setting tailored to our diffusion model. First, we introduce the notation and define the metrics that will be used throughout the paper. Second, we present some key assumptions on the coefficients of our diffusion model that allow us to construct consistent non-adaptive and adaptive estimators of the diffusion coefficient. Finally, we briefly introduce the function spaces that contain the unknown function $\sigma^2$ and the associated approximation spaces.

\subsection{Notation and definitions}
\label{subsec:Def-Not}

We fix the notation and definitions of key objects that are used throughout the paper. For this purpose, we suppose to have $n+1$ observations $X_{k\Delta_n}, ~ k = 0, \ldots, n$ of the unique strong solution $X$ of Equation~\eqref{eq:model}, where $\Delta_n$ tends to zero and $n\Delta_n$ tends to infinity as $n$ tends to infinity. Assume that the process $X$ admits an invariant distribution $\pi_X$. Then for any $h \in \mathbb{L}^2((l,+\infty), \pi_X(dx))$, we have the following metrics:
$$\left\|h\right\|_n^2 := \dfrac{1}{n}\sum_{k=0}^{n-1}h^2(X_{k\Delta_n}), ~~~ \left\|h\right\|_{\pi}^2 := \int_{\mathbb{R}}h^2(x)\pi_X(x)dx.$$
We denote by $\mathbb{P}$ the distribution of the process $X$ and $\mathbb{E}$ its corresponding expectation. We particularly obtain for all $h \in \mathbb{L}^2((l,+\infty), \pi_X(dx)), ~ \mathbb{E}_X\left[\left\|h\right\|_n^2\right] = \left\|h\right\|_{\pi}^2$. For $k \in \mathbb{N}$, we denote by $\mathcal{C}^k(I,\mathbb{R})$ the set of $k$-continuously differentiable functions from the interval $I$ to $\mathbb{R}$. $\mathcal{B}_b(I, \mathbb{R})$ denotes the set of Borel functions from $I$ to $\mathbb{R}$. For any integers $p,q \in \mathbb{N}\setminus\{0\}$ and for any matrix $P \in \mathbb{R}^{p \times q}$, $P^{\prime}$ denotes the transpose matrix of $P$. In addition, when $P \in \mathbb{R}^{p \times p}$ is a symmetrical and positive definite matrix, the operator norm of $P^{-1}$ is defined by
\begin{equation*}
    \left\|P^{-1}\right\|_{\mathrm{op}} := \underset{v \in \mathbb{R}^p\setminus\{0\}}{\sup}\dfrac{\left\|P^{-1}v\right\|_{2,p}}{\|v\|_{2,p}} = \dfrac{1}{\min\{\lambda_1, \ldots, \lambda_r\}},
\end{equation*}
where $\{\lambda_1, \ldots, \lambda_r\} \subset (0, +\infty)$ is the spectrum of $P$ with $r \leq p$. Denote by $\left<.,.\right>$ the standard inner product and $\|.\|_{2,p}$ the euclidean norm in $\mathbb{R}^p$. For any $A,B \in \mathbb{R}$, we denote $A \land B = \min\{A, B\}$ and $A \vee B = \max\{A, B\}$, $\lfloor A \rfloor$, the highest integer strictly smaller than $A$, $\lceil A \rceil$, the smallest integer strictly greater than $A$. For all $p,q \in \mathbb{Z}$ such that $p<q$, we denote by $[\![p, q]\!]$ the set $\{p, p+1, \ldots, q\}$. Finally, consider the functions $f$ and $g$ defined on $I \subset \mathbb{R}$. We say $f(x) = \mathcal{O}(g(x))$ if there exists a constant $C>0$ such that for all $x \in I, ~ |f(x)| \leq C|g(x)|$.

\subsection{Assumptions}
\label{subsec:Assumptions}

In this section, we define an appropriate statistical framework for the construction of non-adaptive and adaptive estimators of $\sigma^2$, and the establishment of theoretical results that validate the performance of the estimators. To this end, consider the following sets:
$$  \mathcal{N}(b) := \left\{x \in (l,+\infty) : ~ b(x) = 0\right\}, ~~~~ \mathcal{N}(\sigma) := \left\{x \in (l,+\infty) : ~ \sigma(x) = 0\right\}$$
and 
$$\mathcal{I}(\sigma) := \left\{x \in (l,+\infty): ~ \int_{-\varepsilon}^{+\varepsilon}\dfrac{dy}{\sigma^2(x + y)} = \infty, ~~ \forall \varepsilon > 0\right\}.$$
We make the following assumptions on the coefficients $b$ and $\sigma$ of the stochastic differential equation \eqref{eq:model}. 

\begin{assumption}\label{ass:NoBlocade}
    The following condition and convention hold.
    \begin{itemize}
        \item $\mathcal{I}(\sigma) \subset \mathcal{N}(b) \cap \mathcal{N}(\sigma)$.
        \item When $\mathcal{N}(b) \cap \mathcal{N}(\sigma) \neq \emptyset$, $b(x)/\sigma^2(x) = 0$ for all $x \in \mathcal{N}(b) \cap \mathcal{N}(\sigma)$.
    \end{itemize}
\end{assumption}

\begin{assumption}\label{ass:drift}
There exists a positive increasing concave function $\kappa : (0, \infty) \rightarrow (0,\infty)$ such that  
\begin{itemize}
    \item $\left|b(x) - b(y)\right| \leq \kappa(|x-y|)$ for all $x, y \in (l, +\infty)$,
    \item for all $\varepsilon > 0, ~~ \int_{0}^{\varepsilon}\kappa^{-1}(u)du = \infty$.
\end{itemize}
\end{assumption}

\begin{assumption}\label{ass:diffusion}
    There exists a positive increasing and continuous function $\rho: (0, \infty) \rightarrow (0, \infty)$ such that
\begin{itemize}
    \item $\left|\sigma(x) - \sigma(y)\right| \leq \rho(|x-y|)$ for all $x, y \in (l,+\infty)$,
    \item for all $\varepsilon > 0, ~~ \int_{0}^{\varepsilon}\rho^{-2}(u)du = \infty$,
    \item $\rho$ has polynomial growth when $x \rightarrow +\infty$. More precisely, there exists $D \geq 1, ~ C>0$ and $A>0$ large enough such that $\rho(x) \leq Cx^D$ for all $x \geq A$.
    \item There exists $\alpha \in [1/2, 1]$ such that $\rho(x) = \mathcal{O}(x^{\alpha})$ when $x \rightarrow 0^+$.
\end{itemize}
\end{assumption}
The above assumptions, which are weak enough to include a sufficiently large range of diffusion models characterized by the diffusion coefficients $b$ and $\sigma$, allow us to define an appropriate statistical setup for the study of projection estimators of $\sigma^2$. Let us start with Assumptions~\ref{ass:drift} and \ref{ass:diffusion} which are the minimum assumptions on the coefficients $b$ and $\sigma$ that ensure the pathwise uniqueness of a solution of Equation~\eqref{eq:model} if any (see \cite{yamada1971uniqueness}, \textit{Theorem 1}). These assumptions require modulus of continuity on coefficients $b$ and $\sigma$, and the Osgood's conditions imposed on positive increasing functions $\kappa$ and $\rho$ imply that 
$\underset{x \rightarrow 0^+}{\lim} \kappa(x) = \underset{x \rightarrow 0^+}{\lim} \rho(x) = 0$, yielding continuity of functions $b$ and $\sigma$. Combining the continuity of functions $b$ and $\sigma$ through Assumptions~\ref{ass:drift} and \ref{ass:diffusion} with Assumption~\ref{ass:NoBlocade}, we also find that $b\sigma^{-2}$ is locally integrable on $\mathcal{I}(\sigma)^{c}$, resulting in the existence of a weak solution of Equation~\eqref{eq:model} (see \cite{engelbert1989strong}, \textit{Theorem 4.53}). Furthermore, coupling the existence of a weak solution with pathwise uniqueness yields the existence of a unique strong solution $X = (X_t)_{t \geq 0}$ (see, \textit{e.g.} \cite{revuzyor1999}, \textit{Chap. IX, Theorem 1.7, p.368}, or \cite{karatzas2014brownian}, \textit{Chap 5, Corollary 5.10, p.338}). One can also obtain the existence of a unique strong solution under Assumptions~\ref{ass:drift} and \ref{ass:diffusion} provided that $\kappa(0) = \rho(0) = 0$ (see \cite{prokhorov1998probability}, \textit{Chap. 2, Theorem 1.4, p. 40-41}). 

We now focus on the study of some key properties of the diffusion process $X$ that are indispensable for the study carried out in this paper. Consider the sequence of random variables $(\zeta_N)_{N \in \mathbb{N}}$ given by
$$\zeta_N := \inf\left\{t \geq 0: ~ X_t \notin (l_N, r_N)\right\}, ~~ N \in \mathbb{N},$$
with $\underset{N \rightarrow \infty}{\lim} l_N = l, ~~ \underset{N \rightarrow \infty}{\lim} r_N = +\infty$, and set $\zeta = \underset{N \rightarrow \infty}{\lim}{\zeta_N}$. $\zeta$ is known as the explosion time of the process $X$ taking values in the interval $(l,+\infty)$. For the purpose of this paper, we want the process $X$ to be non-explosive, that is, $\mathbb{P}(\zeta = +\infty) = 1$, which allows one to establish the ergodicity of the process followed with the existence of a unique invariant probability measure on the support $(l, +\infty)$ of the process $X$. To do this, we make the following additional assumptions.
\begin{assumption}\label{ass:NonDegeneracy}
    The following conditions hold for $b$ and $\sigma$.
    \begin{itemize}
        \item For all $x \in (l, +\infty), ~~ \sigma(x) \neq 0$.
        \item There exists $d > 0$ such that $\dfrac{xb(x)}{\sigma^2(x)} \longrightarrow -d ~~ \mathrm{as} ~~ |x| \rightarrow \infty.$
        \item When $l \in \mathbb{R}$, there exists $\mathfrak{c} \geq 1$ such that for all $\mathfrak{d} \in (l, +\infty)$, $\underset{x \rightarrow l^+}{\lim}{(x-l)^{\mathfrak{c}}\exp\left(-2\int_{\mathfrak{d}}^{x}\dfrac{b(z)}{\sigma^2(z)}dz\right)} = +\infty$.
    \end{itemize}
\end{assumption}
\begin{assumption}\label{ass:StrongAssDrift}
    There exist $r > 0, ~ q \geq 1$ and $R > 0$ such that $xb(x) \leq - r |x|^{q+1}, ~~ \forall ~ x \in (l, +\infty) : ~ |x| > R$.
\end{assumption}
Assumption~\ref{ass:NonDegeneracy} automatically implies Assumption~\ref{ass:NoBlocade} since $\mathcal{I}(\sigma) = \mathcal{N}(b) \cap \mathcal{N}(\sigma) = \emptyset$ and the function $x \mapsto b(x)/\sigma^2(x)$ is well defined on $(l, +\infty)$. Consider the scale function $x \mapsto S(x)$ and the speed measure $\mu(dx)$ related to Equation~\eqref{eq:model} and given as follows (see \cite{revuzyor1999}, \textit{Chap. VII, Exercise 3.20, p.311}):
\begin{align*}
    \forall x \in (l, +\infty), ~ & S(x) = \int_{\mathfrak{d}}^{x}s(y)dy, ~~ \mathrm{where} ~~ s(y) = \exp\left(-2\int_{\mathfrak{d}}^{y}\dfrac{b(z)}{\sigma^2(z)}dz\right),\\
    & \mu(dx) = \dfrac{2dx}{s(x)\sigma^2(x)} = \dfrac{2dx}{\sigma^2(x)}\exp\left(2\int_{\mathfrak{d}}^{x}\dfrac{b(z)}{\sigma^2(z)}dz\right),
\end{align*}
where $\mathfrak{d} \in (l, +\infty)$. The functions $S$ and $m$ are well defined under Assumptions~\ref{ass:drift}, \ref{ass:diffusion} and \ref{ass:NonDegeneracy}. Under Assumption~\ref{ass:StrongAssDrift}, the function $y \in (R, +\infty) \mapsto -2\int_{R}^{y}\frac{b(z)}{\sigma^2(z)}dz$ takes values in $(0, \infty)$, the same remark is applied to the function $y \in (l, -R) \mapsto 2\int_{y}^{-R}\frac{b(z)}{\sigma^2(z)}dz \in (0, +\infty)$ when $l=-\infty$. Then, from Assumptions~\ref{ass:drift}, \ref{ass:diffusion}, \ref{ass:NonDegeneracy} and \ref{ass:StrongAssDrift} we obtain the following for all $l \in \mathbb{R} \cup \{-\infty\}$, 
\begin{equation}\label{eq:S-m}
    S(l) = -\infty, ~~ S(+\infty) = +\infty ~~ \mathrm{and} ~~ \int_{l}^{+\infty}m(x)dx < \infty.
\end{equation} 
The above results have multiple implications for the diffusion process $X$. First, the process $X$ is non-explosive (see, \textit{e.g.} \cite{karatzas2014brownian}, \textit{Chap.5, Proposition 5.22, p.345}). Second, $X$ is positive Harris recurrent and admits an invariant probability measure $\pi_X(dx)$ given by
\begin{equation}\label{eq:InvariantDensity}
    \pi_X(dx) := \dfrac{2dx}{M\sigma^2(x)}\exp\left(2\int_{\mathfrak{d}}^{x}\dfrac{b(z)}{\sigma^2(z)}dz\right), ~~ \mathrm{with} ~~ M = \int_{l}^{+\infty}m(x)dx < \infty
\end{equation}
(see \cite{karatzas2014brownian}, Chap. 5, Exercise 5.40, p.352-353). 
Third, the condition on the function $x \mapsto xb(x)/\sigma^2(x)$ (Assumption~\ref{ass:NonDegeneracy}) governs the dynamical balance between dissipative confinement induced by the drift coefficient and geometric dispersion driven by the diffusion coefficient. The parameter $d$ characterizes the relative strength of the mean-reverting pull of the process toward the origin compared to its volatility. Thus, as $d$ increases, the restoring force becomes dominant relative to the diffusivity, whereas when $d$ is close to zero, this restoring effect becomes negligible. Consequently, stable regimes that lead to a robust estimation of $\sigma^2$ can arise either from the coupling of a weak dissipative effect with a weakly diffusive regime ($\sigma(x)$ essentially varies in $(0,1]$), or from the coupling of a strong but balanced dissipative effect with a sufficiently strong diffusive regime (where $\sigma$ is unbounded or bounded but takes sufficiently large values). Assumption~\ref{ass:NonDegeneracy} also allows us to avoid truncation of the dimension of the approximation spaces and contributes to the derivation of explicit convergence rates when the diffusion coefficient is unbounded, provided that $d$ is large enough. It implies that $\pi_X(x) \rightarrow 0$ as $|x| \rightarrow \infty$ with a polynomial rate, which has several theoretical implications (see Propositions~\ref{prop:Exp-Holder} and \ref{prop:operator-norm-identity}). 

Fourth, since the scale function $S \in \mathcal{C}^2((l, +\infty), \mathbb{R})$ is increasing and the invariant measure $\pi_X(dx)$ satisfies $\pi_X(dx) > 0$, from \cite{itodiffusion}, \textit{Chap. 4, p.106-107, 149 - 158}, the diffusion process $X$ admits a transition density $(t,x,y) \mapsto p_X(t,x,y)$ with respect to the invariant probability measure $\pi_X(dx)$ that is continuous, symmetric, and strictly positive. Consequently, the transition semi-group $P^X: (t, x, f) \mapsto P_{t}^Xf(x)$ given for all $(t, x, f) \in (0, +\infty) \times (l, +\infty) \times \mathcal{B}_b((l, +\infty), \mathbb{R})$ by
$$P_t^Xf(x) := \int_{(l, +\infty)}p_X(t,x,y)f(y)\pi_X(dy)$$
has the strong Feller property. Then $X$ is called a $T$-process in the sense of Meyn \& Tweedie (see \cite{meyn1993stabilityII}, Section 3.2, Theorem 3.3). Fifth, for any $A \subset (l, +\infty)$ such that $\pi_X(A) > 0$, we have 
$$\mathbb{P}\left(X_t \in A | X_0 = x\right) = \int_{A}p_X(t,x,y)\pi_X(dy) > 0 ~~ \forall (t,x) \in (0, +\infty) \times (l, +\infty).$$
So, the process $X$ is $\pi_X$-irreducible (see \cite{meyn1992stabilityI}, Section 1.3), and the invariant probability measure $\pi_X$ is then unique. It follows from Theorem 4.1 in \cite{meyn1993stabilityII} that every compact subset of $(l, +\infty)$ is $\pi_X$-petite. Moreover, recall that the infinitesimal generator $\mathcal{L}$ of the diffusion process $X$ is given for all $f \in \mathcal{C}^2((l, +\infty), \mathbb{R})$ and for all $x \in (l, +\infty)$ by
$$\mathcal{L}f(x) = b(x)f^{\prime}(x) + \dfrac{1}{2}\sigma^2(x)f^{\prime\prime}(x).$$
Consider the function $V_0: x \mapsto (1 + x^{2p})^{1/2p}$ where $p \geq 1$. For all $x \in (l, +\infty)$, we have
$$\mathcal{L}V_0(x) = b(x)V_0^{\prime}(x) + \dfrac{1}{2}\sigma^2(x)V_0^{\prime\prime}(x) = \dfrac{x^{2p-1}b(x)}{(1 + x^{2p})^{(2p-1)/2p}} + \dfrac{(2p-1)x^{2p-2}\sigma^2(x)}{(1 + x^{2p})^{(4p-1)/2p}}.$$
Under Assumptions~\ref{ass:drift}, \ref{ass:diffusion}, \ref{ass:NonDegeneracy} and \ref{ass:StrongAssDrift} and provided that the functions $b$ and $\sigma$ are continuous, there exist constants $c, C > 0$ such that
\begin{equation}\label{eq:Lyapunov-Foster}
    \mathcal{L}V_0(x) \leq -cV_0(x) + C\mathds{1}_{|x| \leq A} \leq -cV_0(x) + C, ~~ x \in (l, +\infty),
\end{equation}
where $A > R$ is large enough with respect to $R$. We deduce from Theorems 5.1 and 6.1 in \cite{meyn1993stabilityIII} that the process $X$ is ergodic and there exist $\rho \in (0,1)$ and $C_0 > 0$ such that
$$\left\|\mathbb{P}\left(X_t \in dy | X_0 = x\right) - \pi_X(dy)\right\|_{TV} \leq C_0V(x)\exp\left(t\log(\rho)\right), ~~ \forall x \in (l, +\infty),$$
where $V = 1 + V_0$, and $\|.\|_{TV}$ is the total variation norm. We can now assume that the process is in the stationary regime for all $t \geq 0$, via the following assumption.
\begin{assumption}\label{ass:stationary}
    $X_0 \sim \pi_X(dx)$.
\end{assumption}
The above assumption on $X_0$ with $\pi_X((l, +\infty)) < \infty$ implies the following result:
\begin{equation}\label{eq:expo-beta-mixing}
    \beta_X(t) \leq C\exp(-\gamma t),
\end{equation}
where $C, \gamma > 0$ are constants and $\beta_X(t)$ is the $\beta$-mixing coefficient of $X_t, ~ t \geq 0$ given by
$$\beta_X(t) = \int_{(l, +\infty)}\pi_X(dx)\left\|\mathbb{P}\left(X_t \in dy | X_0 = x\right) - \pi_X(dy)\right\|_{TV}.$$
Note that under Assumption~\ref{ass:StrongAssDrift}, the drift coefficient $b$ satisfies $b(x) \rightarrow \pm \infty$ as $x \rightarrow \pm \infty$. In these conditions, we show in the next sections that to obtain a consistent projection estimator of $\sigma^2$ without any truncation of dimension, $\sigma^2$ should be unbounded and $\sigma^2(x) \geq c|x|^{q+1}$ when $|x| \rightarrow \infty$, with $c>0$ a constant and $q \geq 1$ given in Assumption~\ref{ass:StrongAssDrift}. The following assumption allows us to consider diffusion models with elliptic diffusion coefficients on $(l, +\infty)$.
\begin{assumption}\label{ass:limit-case-elliptic}
    For all $x \in (l, +\infty), ~ \sigma_{-}^2 \leq \sigma^2(x) \leq \sigma_{+}^2$ where
    $$\sigma_{-}^2 := \underset{x \in (l, +\infty)}{\inf}{~\sigma^2(x)} > 0 ~~ \mathrm{and} ~~ \sigma_{+}^2 := \underset{x \in (l, +\infty)}{\sup}{~\sigma^2(x)} < \infty.$$
    There exist $r \geq 8\sigma_{+}^2 + (\sigma_{+}^2 - \sigma_{-}^2)/2$ and $R>0$ such that
    $$\forall ~ x \in (l, +\infty): ~ |x| > R, ~~ \mathrm{sgn}(x)b(x) \leq -r/|x|,$$
    where $\mathrm{sgn}(x) = x/|x|$ for all $x \neq 0$ and $\mathrm{sgn}(0)=-1$.
\end{assumption}
Under Assumption~\ref{ass:limit-case-elliptic} combined with Assumptions~\ref{ass:drift}, \ref{ass:diffusion}, \ref{ass:NonDegeneracy} and \ref{ass:stationary}, we have a non-explosive process that is positive Harris recurrent, ergodic with a unique invariant probability measure. However, exponential ergodicity is no longer guaranteed as it is not possible to find a norm-like function $V_0$ so that Equation~\eqref{eq:Lyapunov-Foster} holds. We can only rely on the polynomial bound on the $\beta$-mixing coefficient $\beta_X(t)$ established in \cite{veretennikov1997polynomial}, \textit{Theorem 5}, and we obtain
$$\beta_X(t) \leq C(1+t)^{-(k+1)},$$
where $k \in (0, \gamma), ~ \gamma = r_0 - 3/2$ and $r_0 = \left[r - (\sigma_{+}^2 - \sigma_{-}^2)/2\right]/\sigma_{+}^2$. In the sequel, we assume that $r_0$ is large enough (for example $r_0 \geq 16$). The immediate consequence of a polynomial $\beta$-mixing is a slower decay of the dependence between past and future observations of the diffusion process $X$ contrary to the exponential $\beta$-mixing. 

A crucial point in the study of risk bounds for projection estimators of $\sigma^2$ is the control of quantities of the form $\mathbb{E}\left[\left|f(X_{t+h}) - f(X_t)\right|^{p}\right]$ when $h \rightarrow 0$ with $p > 0$ and $f = b, \sigma^2$. We obtain the following result.
\begin{prop}\label{prop:Exp-Holder}
    Under Assumptions~\ref{ass:drift}, \ref{ass:diffusion}, \ref{ass:NonDegeneracy}, \ref{ass:StrongAssDrift} and \ref{ass:stationary}, for all $p \in (0, d/D)$ with $d > 8D$, we have $\mathbb{E}\left[\sigma^{2p}(X_0)\right] < \infty$ and there exist constants $C_{p}, C_{p}^{\prime} > 0$ such that
    \begin{itemize}
        \item[(i)] $\forall ~ h>0, ~~ \mathbb{E}\left[\left|b(X_{t+h}) - b(X_t)\right|^{2p}\right] \leq C_{p}h^{p},$
        \item[(ii)] $\mathbb{E}\left[\left|\sigma(X_{t+h}) - \sigma(X_t)\right|^{2p}\right] \leq C_{p}^{\prime}h^{\alpha p} ~~ \mathrm{when} ~~ h \rightarrow 0^+$.
    \end{itemize}
 Results (i) and (ii) still hold under Assumptions~\ref{ass:drift}, \ref{ass:diffusion}, \ref{ass:NonDegeneracy}, \ref{ass:stationary} and \ref{ass:limit-case-elliptic} for all $p \in (0, d-1)$ with $d>4$. Under Assumption~\ref{ass:limit-case-elliptic}, we have $\mathbb{E}\left[\sigma^{2p}(X_0)\right] < \infty$ for all $p>0$.
\end{prop}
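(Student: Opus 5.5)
The plan has four steps: a moment bound from the tail of the invariant density, moments of increments of $X$, the drift part (i), and the diffusion part (ii).

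\textbf{Step 1: tail of $\pi_X$ and moments of $\sigma^2(X_0)$.} By Assumption~\ref{ass:NonDegeneracy}, for every $\varepsilon>0$ there is $A_\varepsilon$ such that
\[
-(d+\varepsilon)/z \le 2b(z)/\sigma^2(z)\cdot\tfrac12 \le -(d-\varepsilon)/z \quad\text{for } z\ge A_\varepsilon,
\]
and symmetrically near $-\infty$. Hence $\exp(2\int_{\mathfrak d}^x b/\sigma^2)\le C|x|^{-2(d-\varepsilon)}$, and from \eqref{eq:InvariantDensity}
\[
\pi_X(x)\le C|x|^{-2(d-\varepsilon)}/\sigma^2(x).
\]
Near a finite endpoint $l$, the third item of Assumption~\ref{ass:NonDegeneracy} gives $\pi_X(x)\le C(x-l)^{\mathfrak c}/\sigma^2(x)$. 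This is integrable against $\sigma^{2p}$ because $\sigma$ is continuous and nonvanishing, and it is bounded near $l$ since $\sigma$ has a modulus of continuity.

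Next, Assumption~\ref{ass:diffusion} gives $|\sigma(x)|\le|\sigma(0)|+\rho(|x|)\le C(1+|x|^D)$, so $\sigma^{2p}(x)\pi_X(x)\le C|x|^{2D(p-1)-2(d-\varepsilon)}$ at infinity. This is integrable whenever $2Dp<2d+2D-1$, which certainly holds for $p<d/D$. So $\mathbb E[\sigma^{2p}(X_0)]<\infty$, and the same computation gives $\mathbb E[|X_0|^{r}]<\infty$ for all $r<2d-1$.

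Under Assumption~\ref{ass:limit-case-elliptic}, $\sigma^2$ is bounded and the last claim is immediate.

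\textbf{Step 2: moments of increments.} By stationarity it suffices to take $t=0$. Write
\[
X_h-X_0=\int_0^h b(X_s)\,ds+\int_0^h\sigma(X_s)\,dW_s .
\]
For $m\ge1$, Jensen and Burkholder--Davis--Gundy, followed by stationarity, give
\[
\mathbb E|X_h-X_0|^{m}\le C\bigl(h^{m}\mathbb E|b(X_0)|^{m}+h^{m/2}\mathbb E|\sigma(X_0)|^{m}\bigr)\le Ch^{m/2}
\]
for $h\le1$. For $m<1$ use Jensen from the $m=1$ case.

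This requires moments of $b(X_0)$. Since $|b(x)|\le|b(0)|+\kappa(|x|)$ and $\kappa$ is concave, $\kappa(u)\le C(1+u)$, so $|b|$ grows at most linearly. Its moments therefore follow from the moments of $X_0$ in Step 1, and this is where the requirement $d>8D$ (or $d>4$) enters.

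\textbf{Step 3: part (i).} Concavity also gives $\kappa(u)\le C(1+u)$, and in particular $\kappa(u)\le C'$ for $u\le1$ (in fact $\kappa(u)\le cu$ is not available). Instead I would bound $\kappa(u)\le \kappa(1)+\kappa'(1^-)u$, so $\kappa(u)^{2p}\le C(1+u^{2p})$. That only yields $O(1)$, which is not enough, so the real route is this: Osgood's condition together with concavity and $\kappa(0^+)=0$ gives $\kappa(u)\ge cu$ near $0$, which is the wrong direction.

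I therefore expect part (i) to use a Lipschitz-type bound, $\kappa(u)\le Cu$, valid on the relevant range. Concavity with $\kappa(0^+)=0$ gives $\kappa(u)/u$ decreasing, hence $\kappa(u)\le Cu$ for $u\ge1$. Near $0$ one needs $\kappa(u)\lesssim u$, which I expect the author takes from the typical moduli $u\log(1/u)$ at the cost of a logarithm, or implicitly assumes. With $\kappa(u)\le C u$, Step 2 with $m=2p$ gives
\[
\mathbb E|b(X_h)-b(X_0)|^{2p}\le C\,\mathbb E|X_h-X_0|^{2p}\le Ch^{p}.
\]
This near-$0$ behaviour of $\kappa$ is the main obstacle, and I would handle it by splitting on $\{|X_h-X_0|\le h^{1/2-\delta}\}$ and its complement, using Markov's inequality with high moments on the complement.

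\textbf{Step 4: part (ii).} Split the expectation on $\{|X_h-X_0|\le A\}$ and its complement.

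On the first event, Assumption~\ref{ass:diffusion} gives $\rho(u)\le Cu^\alpha$ for $u\le A$, by continuity and monotonicity of $\rho$. Hence the contribution is at most $C\,\mathbb E|X_h-X_0|^{2\alpha p}\le Ch^{\alpha p}$.

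On the complement, $\rho(u)\le Cu^D$. By Cauchy--Schwarz and Markov, the contribution is at most
\[
C\bigl(\mathbb E|X_h-X_0|^{4Dp}\bigr)^{1/2}\,\mathbb P(|X_h-X_0|>A)^{1/2}\le C h^{Dp}\,h^{k/4}
\]
for large $k$. This term is negligible, and it is where the moment condition $p<d/D$ is used.

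In the elliptic case the same splitting works, with all moments of $\sigma$ finite and $b$ controlled by the moments from the $|x|^{-2d}$ tail, where $d>4$ and $p<d-1$.
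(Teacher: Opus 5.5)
\textbf{Part (i) remains unproved, and your proposed repair cannot work.} You are right that concavity gives $\kappa(u)\le\kappa(1)u$ only for $u\ge1$, and the reverse inequality on $(0,1]$. But splitting on $\{|X_{t+h}-X_t|\le h^{1/2-\delta}\}$ does not help, because the loss happens on the good event, not in the tail. For instance, $\kappa(u)=u\log(e^2/u)$ on $(0,1]$ and $\kappa(u)=1+u$ for $u\ge1$ is positive, increasing, concave and satisfies Osgood's condition. On that event you only get $\kappa^{2p}(|X_{t+h}-X_t|)\le\kappa(h^{1/2-\delta})^{2p}\asymp h^{p(1-2\delta)}\log^{2p}(1/h)$. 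More precisely, since $\kappa(u)/u$ is non-increasing, $\kappa(u)\le\kappa(\eta)(1+u/\eta)$ for every $\eta>0$. Taking $\eta=\sqrt h$ and using your Step 2 gives $\mathbb{E}|b(X_{t+h})-b(X_t)|^{2p}\le C_p\,\kappa(\sqrt h)^{2p}$. Conversely, $|X_{t+h}-X_t|\ge c\sqrt h$ with probability bounded below, so $\mathbb{E}[\kappa^{2p}(|X_{t+h}-X_t|)]$ is of exact order $\kappa(\sqrt h)^{2p}$. This is $O(h^p)$ if and only if $\kappa(u)=O(u)$ at $0$, so (i) as stated needs the extra hypothesis that $b$ is Lipschitz. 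The paper reaches $h^p$ only by asserting $\kappa(x)\le\kappa(1)x$ for all $x>0$ ``since $\kappa$ is concave'', which is exactly the invalid direction you flagged. You have located the weak point of the paper's argument, but your proposal does not close it, and no argument that bounds the increment through $\kappa$ can.

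\textbf{Part (ii): truncating on the size of the increment is the right idea, but your moment bookkeeping covers only part of the range.} On $\{|X_{t+h}-X_t|>A\}$, your Cauchy--Schwarz step needs $\mathbb{E}|X_{t+h}-X_t|^{4Dp}$. By your Step 2 this requires moments of $b(X_0)$ and $\sigma(X_0)$ of order $4Dp$, which is close to $4d$ when $p$ is near $d/D$. In the exponentially ergodic case the tail $\pi_X(x)\approx|x|^{-2d}/\sigma^2(x)$ only provides moments up to order about $2d$, and the same cap limits the ``large $k$'' in your Markov bound. In the elliptic case $b$ and $\sigma$ are bounded, so there your argument is fine.

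A clean repair is to replace $\rho$ by the optimal modulus $\omega_\sigma(u)=\sup_{|x-y|\le u}|\sigma(x)-\sigma(y)|\le\rho(u)$. It is subadditive on an interval, so if $\rho(u)\le Cu^\alpha$ on $(0,u_0]$ then $\omega_\sigma(u)\le C'(u^\alpha+u)$ for all $u>0$. Hence
\[
\mathbb{E}|\sigma(X_{t+h})-\sigma(X_t)|^{2p}\le C\bigl(\mathbb{E}|X_{t+h}-X_t|^{2\alpha p}+\mathbb{E}|X_{t+h}-X_t|^{2p}\bigr)\le C h^{\alpha p},\qquad h\le 1,
\]
which uses only the $2p$-th moments from Step 2. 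For $\mathbb{E}|b(X_0)|^{2p}$ up to $2p<2d/D$, use $\sigma^2(x)\ge c|x|^{q+1}$ at infinity, which follows from Assumptions~\ref{ass:NonDegeneracy} and \ref{ass:StrongAssDrift}; your range $r<2d-1$ falls short when $D=1$.

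With this repair your route to (ii) is sounder than the paper's. The paper uses Kolmogorov--\v{C}entsov to claim a deterministic $h_0$ with $|X_{t+h}-X_t|\le\eta$ a.s.\ for all $h\le h_0$. That is false for a nondegenerate diffusion, whose increments are unbounded, yet the paper then applies $\rho(u)\le Cu^\alpha$ to every increment.
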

Proposition~\ref{prop:Exp-Holder} provides a key result allowing establishment of risk bounds for projection estimators of $\sigma^2$ on $(l, +\infty)$. Under Assumption~\ref{ass:limit-case-elliptic}, we can always choose the drift coefficient $b$ so that $\left|xb(x)\right|/\sigma^2(x) \rightarrow d > 4$ as $|x| \rightarrow \infty$. Note that if $\rho(x) = \mathcal{O}(x)$ on the interval $(0, \infty)$, then the above result coincides with that of Proposition A in \cite{gloter2000discrete} with $f \in \{b, \sigma\}$,  assuming that $b$ and $\sigma$ are differentiable and for all $x \in (l, +\infty)$, $\left|b^{\prime}(x)\right| + \left|\sigma^{\prime}(x)\right| = \mathcal{O}\left(1 + |x|^{\gamma}\right)$ with $\gamma \geq 0$. However, the proofs differ, as in the present paper, we make use of Assumptions~\ref{ass:drift} and \ref{ass:diffusion}, while the proof method in \cite{gloter2000discrete} relies on the finite increment inequality. Proposition~\ref{prop:Exp-Holder} extends the result to functions that are not necessarily differentiable. 

In the next section, we define bases of non compactly supported functions suitable for estimating $\sigma^2$ on $(l, +\infty)$.

\subsection{Approximation spaces}
\label{subsec:spaces}

The initial space $\mathbb{L}^2((l, +\infty), \pi_X(dx))$ should be approximated by finite dimensional spaces generated by bases whose Gram matrices are invertible and the operator norm of their inverses can be controlled, the goal being to avoid truncation of the dimension. In this context, the Hermite and the Laguerre bases are suitable candidates that satisfy the conditions required and are presented below.

\paragraph{\textbf{The Hermite basis.}} The Hermite basis of dimension $m \geq 1$ is the orthonormal basis $(h_0, \ldots, h_{m-1})$, where for each $j \in \{0, \ldots, m-1\}$, $x \mapsto h_j(x) = c_jH_j(x)\mathrm{e}^{-x^2/2}$ belongs to the space $\mathbb{L}^2(\mathbb{R})$ with
$$ H_j(x) := (-1)^j\mathrm{e}^{x^2}\dfrac{d^j}{dx^j}\left(\mathrm{e}^{-x^2}\right) ~~ \mathrm{and} ~~ c_j := \dfrac{1}{\sqrt{2^j j! \sqrt{\pi}}}.$$
For all $j \geq 0, ~ \|h_j\|_{\infty} < \infty$ and we deduce the following: 
$$\mathcal{L}(m) := \underset{x \in \mathbb{R}}{\sup}{\sum_{j=0}^{m-1}h_j^2(x)} \leq \underset{j \in \{0,\ldots, m-1\}}{\max}\|h_j\|_{\infty}^2m.$$ 
Moreover, for all $j \geq 0$, there exist constants $c,c_0>0$ such that $|h_j(x)| \leq c|x|\exp\left(-c_0x^2\right)$ for all $x \in \mathbb{R}$ satisfying $x^2 \geq 2(2j+1)$ (see, \textit{e.g.}, \cite{askey1965mean}). This property is essential for the control of $\mathcal{L}(m)\|\Psi_m^{-1}\|_{\mathrm{op}}$ (see the proof of Proposition~\ref{prop:operator-norm-identity}).

\paragraph{\textbf{The Laguerre basis.}} The Laguerre basis of dimension $m \geq 1$ is the orthonormal basis $(\ell_0, \ldots, \ell_{m-1})$ where the functions $x \mapsto \ell_j(x) = \sqrt{2}L_j(2x)\mathrm{e}^{-x}\mathds{1}_{x \geq 0}, ~ j \in \{0, \ldots, m-1\}$ belong to the space $\mathbb{L}^2([0, +\infty))$ with
$$ L_j(x) := \sum_{k=0}^{j}(-1)^k\binom{j}{k}\dfrac{x^k}{k!}.$$
Similarly, we have $\|\ell_j\|_{\infty} < \infty$ and we obtain
$$\mathcal{L}(m) := \underset{x \in \mathbb{R}}{\sup}{\sum_{j=0}^{m-1}\ell_j^2(x)} \leq \underset{j \in \{0,\ldots, m-1\}}{\max}\|\ell_j\|_{\infty}^2m.$$ 
We also obtain for each $j \geq 0$ that there exist constants $c, \gamma > 0$ such that $|\ell_j(x)| \leq c\exp\left(-\gamma x\right)$ for all $x \geq 3(2j+1)$ (see, \textit{e.g.}, \cite{askey1965mean}). 

\paragraph{\textbf{Spaces of approximation.}} Let $m \geq 1$ be an integer. We consider the $m$-dimensional space $\mathcal{S}_m := \mathrm{Span}(\phi_0, \ldots, \phi_{m-1})$ spanned by a generic orthonormal basis $(\phi_0, \ldots, \phi_{m-1})$ on the interval $(l, +\infty)$ that represents the Hermite basis ($l = - \infty$) or the Laguerre basis ($l = 0$). The collection $(\mathcal{S}_m)_{m \geq 1}$ of approximation sub-spaces is nested, that is, for all $m,m^{\prime} \geq 1$ such that $m \leq m^{\prime}$, we have $\mathcal{S}_m \subset \mathcal{S}_{m^{\prime}}$. We impose a control on the coordinate vectors of elements of $\mathcal{S}_m$ and obtain the following space:
$$\mathcal{S}_{m,L} := \left\{f = \sum_{i = 0}^{m-1}a_i\phi_i \in \mathcal{S}_m: ~ \sum_{i=0}^{m-1}a_i^2 \leq mL\right\},$$
where $L>0$ is a real number. The collection of models $(\mathcal{S}_{m,L})_{m \geq 1}$ is also nested ($\forall ~ m,m^{\prime} \geq 1, ~~ m \leq m^{\prime} \Rightarrow \mathcal{S}_{m,L} \subset \mathcal{S}_{m^{\prime}, L}$). The $\ell^2$-constraint imposed on the coordinate vectors of elements of the sub-spaces allows the construction of nonparametric ridge estimators of $\sigma^2$ and the derivation of their risk bounds. The real number $L>0$ may depend on $n$ with logarithmic growth.\\
The next section is devoted to the construction of projection estimators of $\sigma^2$ on the Hermite and Laguerre bases.

\section{Projection estimators of the diffusion coefficient}
\label{sec:projection}
\label{subsec:ProjectionEstimators}

We define projection estimators of the squared diffusion coefficient $\sigma^2$ on non-compact supports, built from the observation points $X_{k\Delta_n}, ~ k = 0, \ldots, n$.

\subsection{Projection estimators of the diffusion coefficient}
\label{subsec:estimator-sigma}

The definition of an efficient regression model for the construction of estimators of $\sigma^2$ requires the following additional assumption on $\sigma$.
\begin{assumption}\label{ass:Regular-Sigma}
    $\sigma^2 \in \mathcal{C}^2((l, +\infty), \mathbb{R})$, and there exist $C>0$ and $\theta \in [0,D]$ such that $|\sigma^{\prime}(x)| + |\sigma^{\prime\prime}(x)| \leq C(1+|x|^{\theta})$ for all $x \in (l,+\infty)$, where $D\geq 1$ is defined in Assumption~\ref{ass:diffusion}.
\end{assumption}
Then, under Assumptions~\ref{ass:NoBlocade}, \ref{ass:drift}, \ref{ass:diffusion} and \ref{ass:Regular-Sigma}, the projection estimators of $\sigma^2$ are built from the following regression model:
\begin{equation*}
     U_{k\Delta_n} := \dfrac{\left(X_{(k+1)\Delta_n} - X_{k\Delta_n}\right)^2}{\Delta_n} = \sigma^2(X_{k\Delta_n}) + \xi_{k\Delta_n} + R_{k\Delta_n}, ~~ k \in \{0, \ldots, n-1\},
\end{equation*}
where $U_{k\Delta_n}$ is the response variable, $\xi_{k\Delta_n} = \xi_{k\Delta_n}^{(1)} + \xi_{k\Delta_n}^{(2)} + \xi_{k\Delta_n}^{(3)}$ is the error term with
\begin{equation}\label{eq:Error-term}
    \begin{aligned}
        \xi_{k\Delta_n}^{(1)} := &~ \dfrac{1}{\Delta_n}\left[\left(\int_{k\Delta_n}^{(k+1)\Delta_n}\sigma(X_s)dW_s\right)^2 - \int_{k\Delta_n}^{(k+1)\Delta_n}\sigma^2(X_s)ds\right],\\
        \xi_{k\Delta_n}^{(2)} := &~ \dfrac{2}{\Delta_n}\int_{k\Delta_n}^{(k+1)\Delta_n}((k+1)\Delta_n - s)\sigma^{\prime}(X_s)\sigma^2(X_s)dW_s ~~ \mathrm{and} ~~ \xi_{k\Delta_n}^{(3)} := 2b(X_{k\Delta_n})\int_{k\Delta_n}^{(k+1)\Delta_n}\sigma(X_s)dW_s,
    \end{aligned}
\end{equation}
and $R_{k\Delta_n} = R_{k\Delta_n}^{(1)} + R_{k\Delta_n}^{(2)} + R_{k\Delta_n}^{(3)}$ is a negligible residual with
\begin{equation}\label{eq:Residual}
    \begin{aligned}
        R_{k\Delta_n}^{(1)} := &~ \dfrac{1}{\Delta_n}\left(\int_{k\Delta_n}^{(k+1)\Delta_n}b(X_s)ds\right)^2, ~~~~ R_{k\Delta_n}^{(2)} := \dfrac{1}{\Delta_n}\int_{k\Delta_n}^{(k+1)\Delta_n}((k+1)\Delta_n - s)\Gamma(X_{k\Delta_n}))ds,\\
        R_{k\Delta_n}^{(3)} := &~ \dfrac{2}{\Delta_n}\int_{k\Delta_n}^{(k+1)\Delta_n}\left(b(X_s) - b(X_{k\Delta_n})\right)ds\int_{k\Delta_n}^{(k+1)\Delta_n}\sigma(X_s)dW_s.
    \end{aligned}
\end{equation}
where $\Gamma = 2b\sigma\sigma^{\prime} + \left[\sigma\sigma^{\prime\prime} + (\sigma^{\prime})^2\right]\sigma^2$. Consider the contrast function $\gamma_n : f \in \mathcal{S}_n \mapsto \gamma_n(f)$ given by
    \begin{equation}\label{eq:gamma-function}
        \gamma_n(f) := \dfrac{1}{n}\sum_{k=0}^{n-1}\left(U_{k\Delta_n} - f(X_{k\Delta_n})\right)^2.
    \end{equation}
    The projection estimators of $\sigma^2$ are defined by
    \begin{equation}\label{eq:NA-estimator}
        \widehat{\sigma}_{m}^2 := \underset{f \in \mathcal{S}_{m, L}}{\arg\min}{~\gamma_n(f)}, ~~ m \geq 1.
    \end{equation}
    Let $\Psi_m$ be the Gram matrix of the generic basis $(\phi_0, \ldots, \phi_{m-1})$, and $\widehat{\Psi}_m$ its empirical counterpart defined by
\begin{equation*}
    \begin{aligned}
        \Psi_m := &~ \left(\int_{(l, +\infty)}\phi_{i}(x)\phi_{j}(x)\pi_X(dx)\right)_{0 \leq i,j \leq m-1}, ~~~ \widehat{\Psi}_m := \dfrac{1}{n}\widehat{\Phi}_m^{\prime}\widehat{\Phi}_m,
    \end{aligned}
\end{equation*}
where $\widehat{\Phi}_m = \left(\phi_{j}(X_{k\Delta_n})\right)_{0 \leq k \leq n-1, ~ 0 \leq j \leq m-1} \in \mathbb{R}^{n \times m}$.  Consequently, for each $m \geq 1$, the projection estimator $\widehat{\sigma}_m^2$ is given for all $x \in (l, +\infty)$ by
$$\widehat{\sigma}_m^2(x) = \left<\widehat{\mathbf{a}}, (\phi_0(x), \ldots, \phi_{m-1}(x))^{\prime}\right> = \sum_{i = 0}^{m-1}\widehat{a}_i\phi_{i}(x),$$ 
where the vector $\widehat{\mathbf{a}} = (\widehat{a}_0, \ldots, \widehat{a}_{m-1})$ is given by
$$\widehat{\mathbf{a}} := \underset{\|\mathbf{a}\|_2 \leq mL}{\arg\min}{~\left\|\mathbf{U} - \widehat{\Phi}_m\mathbf{a}\right\|_2^2},$$
with $\mathbf{U} = \left(U_{k\Delta_n}, ~ k = 0, \ldots, n-1\right) \in \mathbb{R}^n$. If we assume that $\widehat{\Psi}_m$ is a.s. invertible, then 
\begin{equation}\label{eq:Coordinate-a}
    \widehat{\mathbf{a}} := \dfrac{1}{n}\widehat{\Psi}_m^{-1}\widehat{\Phi}_m^{\prime}\mathbf{U},
\end{equation}
otherwise,
\begin{equation}\label{eq:Coordinate-b}
    \widehat{\mathbf{a}} := \dfrac{1}{n}\left(\widehat{\Psi}_m + \widehat{\lambda} I_m\right)^{-1}\widehat{\Phi}_m^{\prime}\mathbf{U},
\end{equation} 
where $\widehat{\lambda}$ is the unique solution of $\left\|\widehat{\mathbf{a}}_{\lambda}\right\|_2^2 = mL$ with $\widehat{\mathbf{a}}_{\lambda} = (1/n)\left(\widehat{\Psi}_m + \lambda I_m\right)^{-1}\widehat{\Phi}_m^{\prime}\mathbf{U}$.
In the next section, we establish a key result on the Gram matrix $\Psi_m$ that is crucial for the study of risk bounds of the non-adaptive estimators of $\sigma^2$.

\subsection{Key properties on the Gram matrix}
\label{subsec:GramMatrix}

Establishing risk bounds for projection estimators of $\sigma^2$ requires a control of the quantity $\mathcal{L}(m)\left\|\Psi_m^{-1}\right\|_{\mathrm{op}}$ where the Gram matrix $\Psi_m$ is proven to be invertible. In \cite{comte2021drift}, it is assumed that
$$\mathcal{L}(m)\left\|\Psi_m^{-1}\right\|_{\mathrm{op}} \leq \dfrac{\mathfrak{c}n\Delta_n}{\log^2(n\Delta_n)},$$
where $\mathfrak{c} > 0$ is a numerical constant. This assumption leads to the construction of a truncated estimator conditional on event $\left\{\mathcal{L}(m)\|\widehat{\Psi}_m^{-1}\|_{\mathrm{op}} \leq \mathfrak{c}n\Delta_n/\log^2(n\Delta_n)\right\}$. To avoid this truncation through $\widehat{\Psi}_m$, we derive the following results.
\begin{prop}\label{prop:operator-norm-identity}
    Under Assumptions~\ref{ass:drift}, \ref{ass:diffusion}, \ref{ass:NonDegeneracy}, \ref{ass:StrongAssDrift} and \ref{ass:stationary} and for $m$ large enough, the following holds:
    $$\mathcal{L}(m)\left\|\Psi_m^{-1}\right\|_{\mathrm{op}} \leq C\left(\underset{j \in [\![0,m-1]\!]}{\max}{\left\|\phi_j\right\|_{\infty}^2}\right)m^{D+d+3/2},$$
where $d>8D$ and $C>0$ is a constant depending on $b, \sigma, D$ and $d$, and $(\phi_0, \ldots, \phi_{m-1})$ is the Hermite basis or the Laguerre basis. Moreover, under Assumptions~\ref{ass:drift}, \ref{ass:diffusion}, \ref{ass:NonDegeneracy}, \ref{ass:stationary} and \ref{ass:limit-case-elliptic}, there exists a constant $C>0$ such that
$$\mathcal{L}(m)\left\|\Psi_m^{-1}\right\|_{\mathrm{op}} \leq C\left(\underset{j \in [\![0,m-1]\!]}{\max}{\left\|\phi_j\right\|_{\infty}^2}\right)m^{d+3/2},$$
where $d>4$.
\end{prop}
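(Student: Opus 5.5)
Since $\mathcal{L}(m)\le \max_j\|\phi_j\|_\infty^2\, m$ (Section~\ref{subsec:spaces}), it suffices to bound the smallest eigenvalue of $\Psi_m$ from below: we aim for $\|\Psi_m^{-1}\|_{\mathrm{op}}\le C m^{D+d+1/2}$ under Assumption~\ref{ass:StrongAssDrift} and $\le C m^{d+1/2}$ under Assumption~\ref{ass:limit-case-elliptic}. For $\mathbf a\in\mathbb R^m$ with $\|\mathbf a\|_{2,m}=1$ put $f=\sum_j a_j\phi_j$, so that $\mathbf a'\Psi_m\mathbf a=\int f^2\,\pi_X(dx)$ and, by orthonormality in $\mathbb L^2((l,+\infty),dx)$, $\int f^2dx=1$. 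The strategy is to restrict the integral to a window $K_m=[-A_m,A_m]$ (Hermite) or $[0,A_m]$ (Laguerre) that carries at least half of the Lebesgue mass of $f$, and then use a lower bound for the invariant density on that window:
$$\mathbf a'\Psi_m\mathbf a\ \ge\ \Big(\inf_{x\in K_m}\pi_X(x)\Big)\int_{K_m}f^2(x)\,dx\ \ge\ \tfrac12\inf_{x\in K_m}\pi_X(x).$$

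\textbf{Step 1 (tail mass of $f$).} Use the decay properties recalled in Section~\ref{subsec:spaces}. For Hermite, $|h_j(x)|\le c|x|e^{-c_0x^2}$ when $x^2\ge 2(2j+1)$; for Laguerre, $|\ell_j(x)|\le ce^{-\gamma x}$ when $x\ge 3(2j+1)$. By Cauchy--Schwarz, $f^2(x)\le\sum_{j<m}\phi_j^2(x)$. Integrating over $|x|>A_m$ with $A_m=c_1\sqrt m$ (Hermite) or $A_m=c_1m$ (Laguerre), $c_1$ large, gives $\int_{K_m^c}f^2\le C m\,e^{-c'A_m}$ or better. This is at most $1/2$ for $m$ large, which is where the condition ``$m$ large enough'' comes from. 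The constants $c,c_0,\gamma$ must be uniform in $j$ for this step; that is how I would read the cited Askey--Wainger bounds.

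\textbf{Step 2 (lower bound on $\pi_X$ on $K_m$).} Use formula \eqref{eq:InvariantDensity}. By the second item of Assumption~\ref{ass:NonDegeneracy}, for $|z|\ge A$ we have $b(z)/\sigma^2(z)\ge -(d+\varepsilon)/z$ for $z>0$, and symmetrically for $z<0$. Hence $\exp\big(2\int_{\mathfrak d}^x b/\sigma^2\big)\ge c|x|^{-2(d+\varepsilon)}$. The prefactor $1/\sigma^2(x)$ is bounded below by $c|x|^{-2D}$, using $\sigma(x)\le \sigma(0)+\rho(|x|)\le C(1+|x|^D)$ from Assumption~\ref{ass:diffusion}. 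Under Assumption~\ref{ass:limit-case-elliptic} the prefactor is simply $\ge 1/\sigma_+^2$. This yields $\pi_X(x)\ge c(1+|x|)^{-2D-2d-\epsilon}$ or $c(1+|x|)^{-2d-\epsilon}$ respectively. On the compact part, continuity and positivity of $\pi_X$ give a positive lower bound. In the Laguerre case with $l=0$, the boundary behaviour near $0$ is controlled by the third item of Assumption~\ref{ass:NonDegeneracy}; there one needs $f^2$ not concentrated near $0$ (for instance by shrinking the window to $[\delta_m,A_m]$ with $\int_0^{\delta_m}f^2\le m\,\delta_m\max_j\|\phi_j\|^2_\infty$ small), and I expect this to cost only constants or logarithms.

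\textbf{Step 3 (combine) and main obstacle.} Inserting $A_m\asymp\sqrt m$ in the Hermite case gives $\inf_{K_m}\pi_X\gtrsim m^{-(D+d)-\epsilon/2}$. Absorbing the $\epsilon$ into the extra $m^{1/2}$ factor yields $\|\Psi_m^{-1}\|_{\mathrm{op}}\le Cm^{D+d+1/2}$, and multiplying by $\mathcal L(m)$ gives the claim; the elliptic case is identical with $D$ removed. The main obstacle is matching the exponents exactly in the Laguerre case, where $A_m\asymp m$ would give $m^{2(D+d)}$. I would first try the sharper Laguerre localisation: the oscillatory region of $\ell_j$ is $[0,4j+2]$, but one can exploit the $L^2$-mass bound $\int_{x>\nu m}\sum_j\ell_j^2$ together with the weight $\pi_X$ decaying only polynomially. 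If that fails, I would accept a window of size $m^{1/2}$ obtained via the Plancherel--Rotach asymptotics, which is the delicate step.
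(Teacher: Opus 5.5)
Your Hermite argument is correct and is essentially the paper's proof. You restrict $\int f^2\,d\pi_X$ to a window $|x|\lesssim\sqrt m$, outside of which the uniform Gaussian decay of the $h_j$ leaves at most half of the Lebesgue mass of $f$. You then bound $\pi_X$ below on that window by $c\,m^{-(D+d+1/2)}$, using $xb(x)/\sigma^2(x)\to-d$ and $\sigma^2(x)\le C(1+|x|^{2D})$. Your uniform bound on $\sigma^2$ is actually cleaner than the paper's step, which evaluates $\sigma^2$ at $\pm\sqrt{4m+2}$ as though $\sigma^2$ and $\pi_X$ were monotone in $|x|$.

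The Laguerre case is a genuine gap. In the exponentially ergodic regime it cannot be closed at all, because the first bound of the proposition is false for the Laguerre basis.

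Your diagnosis is right: $\ell_j$ decays exponentially only for $x\gtrsim j$, so the window must have length $\asymp m$, and this gives an exponent of order $2(D+d)$. The paper reaches $D+d+1/2$ only through an error. It takes $a_m=\sqrt{3(2m+1)}$ while invoking the decay bound $\ell_j(x)\le c_0e^{-\gamma x}$, which holds only for $x\ge 3(2m+1)$. It also ignores the left endpoint, bounding $\inf_{0<y\le a_m}\pi_X(y)$ through values at $\pm\sqrt{6m+3}$.

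No sharper localisation can rescue the exponent. The space $\mathcal S_m=\{p(x)e^{-x}\mathds{1}_{x\ge0}:\deg p\le m-1\}$ contains $f_m(x)=x^{m-1}e^{-x}$. By orthonormality of $(\ell_j)$ in $\mathbb L^2(dx)$, its coefficient vector $\mathbf a$ satisfies
\[
\frac{\mathbf a'\Psi_m\mathbf a}{\|\mathbf a\|_{2,m}^2}=\int_0^\infty\pi_X(x)\,g_m(x)\,dx,\qquad g_m(x)=\frac{2^{2m-1}x^{2m-2}e^{-2x}}{(2m-2)!},
\]
and $g_m$ is increasing on $[0,m-1]$. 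Split at $m/4$ and use $\int\pi_X=1$. Use also the paper's own bound $\pi_X(x)\le Cx^{-(2d-1)}/\sigma^2(x)$ for $x\ge A$, together with $\sigma^2(x)\ge cx^{q+1}\ge cx^2$ (from Assumptions~\ref{ass:NonDegeneracy} and \ref{ass:StrongAssDrift}). This gives
\[
\lambda_{\min}(\Psi_m)\le g_m(m/4)+\sup_{x\ge m/4}\pi_X(x)\le e^{-cm}+C\,m^{-(2d+1)}.
\]
Hence $\|\Psi_m^{-1}\|_{\mathrm{op}}\ge c\,m^{2d+1}$. For the Laguerre basis, however, $\mathcal L(m)=2m$ and $\max_j\|\ell_j\|_\infty^2=2$, so the proposition would force $\|\Psi_m^{-1}\|_{\mathrm{op}}\le C\,m^{D+d+1/2}$. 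Since $d>8D$, this is impossible for large $m$. Nondegenerate examples exist, e.g. $b(x)=1-x$, $\sigma^2(x)=(x+x^2)/10$ on $(0,\infty)$.

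The same example explains why your Plancherel--Rotach fallback fails. $f_m^2$ has essentially all its mass near $x\approx m$, so no window of length $\sqrt m$ can carry half the mass of every unit $f\in\mathcal S_m$. Likewise, the oscillatory range of $\ell_{m-1}$ is $[0,\approx 2m]$, not $[0,\sqrt m]$.

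Your window-$\asymp m$ bound is therefore essentially the true order, but it still needs a lower bound on $\pi_X$ near $0^+$, and the assumptions provide none. The third item of Assumption~\ref{ass:NonDegeneracy} only bounds $\pi_X$ from above there, so your expectation that the boundary costs only constants or logarithms is unsupported.
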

Proposition~\ref{prop:operator-norm-identity} is a key ingredient in the derivation of sharp risk bounds for projection estimators of $\sigma^2$. The bound on $\mathcal{L}(m)\left\|\Psi_m^{-1}\right\|_{\mathrm{op}}$ was established for exponentially ergodic processes where $\sigma$ is unbounded and $\sigma(x) \rightarrow \infty$ as $|x| \rightarrow \infty$. This condition on $\sigma$ is indispensable to ensure that $|xb(x)|/\sigma^2(x) \rightarrow d > 8D$ as $|x| \rightarrow \infty$ since, under Assumption~\ref{ass:StrongAssDrift} and for $|x| \geq R > 0, ~ |b(x)| \geq r|x|^q$. For polynomially ergodic processes, we obtain a sharper bound on $\mathcal{L}(m)\left\|\Psi_m^{-1}\right\|_{\mathrm{op}}$ due to the ellipticity condition on $\sigma^2$ under Assumption~\ref{ass:limit-case-elliptic}. In other words, the parameter $D$ characterizes the boundlessness of $\sigma^2$ when Assumption~\ref{ass:StrongAssDrift} is granted. Then, since the functions $\phi_j, ~ j \in [\![0,m-1]\!]$ are bounded, setting $s_0 = D + d + 2$ and $m = \left\lfloor (n\Delta_n)^{2/(2s+k_0)} \right\rfloor, ~ s \geq s_0$ (or $s_0 = 2d + 3$ and $m = \left\lfloor (n\Delta_n)^{2/(2s+d_0)} \right\rfloor, ~ s \geq s_0$ under Assumption~\ref{ass:limit-case-elliptic}), there exists a constant $C>0$ such that
\begin{equation}\label{eq:operator-norm-identity}
    \mathcal{L}(m)\left\|\Psi_m^{-1}\right\|_{\mathrm{op}} \leq C(n\Delta_n)^{\beta(s)},
\end{equation}
with $\beta(s) := (2D+2d+3)/(2s+1) < 1$ under Assumption~\ref{ass:StrongAssDrift} and $\beta(s) := (2d+3)/(2s+1)<1/2$ under Assumption~\ref{ass:limit-case-elliptic}. Consider the random set $\Omega_{n,m}$ in which the metrics $\|.\|_n^2$ and $\|.\|_{\pi}^2$ are equivalent and given as follows:
$$ \Omega_{m} := \bigcap_{f \in \mathcal{S}_{m}\setminus\{0\}}\left\{\left|\dfrac{\|f\|_n^2}{\|f\|_{\pi}^2} - 1\right|\leq \dfrac{1}{2}\right\} = \left\{\underset{f \in \mathcal{S}_m, ~ \|f\|_{\pi}^2 = 1}{\sup}{\left|\|f\|_n^2 - 1\right|} \leq \dfrac{1}{2}\right\}.$$
Then, in the event $\Omega_{m}$ and for $f \in \mathcal{S}_{m}$, we have $(1/2)\|f\|_{\pi}^2 \leq \|f\|_n^2 \leq (3/2)\|f\|_{\pi}^2$. The derivation of optimal upper bounds on the risk of estimation requires an upper bound on $\mathbb{P}(\Omega_{m}^{c})$ that is sufficiently sharp with respect to the expected rates. We obtain the following result.
\begin{lemma}\label{lm:omega-comp}
    Suppose that $n \rightarrow \infty$. We have the following results. 
    \begin{itemize}
        \item Suppose that $m = \left\lfloor (n\Delta_n)^{2/(2s+k_0)} \right\rfloor$ with $k_0 = D+d+3/2$, $n\Delta_n^2 = \varepsilon_0\log^4(n)$, where $\varepsilon_0 \in (0, 10^{-1})$ is a fixed numerical constant. Under Assumptions~\ref{ass:drift}, \ref{ass:diffusion}, \ref{ass:NonDegeneracy}, \ref{ass:StrongAssDrift} and \ref{ass:stationary}, there exists a constant $C>0$ such that
        $$\mathbb{P}(\Omega_m^c) \leq C\exp\left(-\dfrac{\gamma}{4}\sqrt{\varepsilon_0}\log^2(n)\right),$$
        where the constant $\gamma>0$ is found in Equation~\eqref{eq:expo-beta-mixing}.
        \item Suppose that $m = \left\lfloor (n\Delta_n)^{2/(2s+d_0)} \right\rfloor$ with $d_0 = d+3/2$, and $n\Delta_n^2 = 1$. Under Assumptions~\ref{ass:drift}, \ref{ass:diffusion}, \ref{ass:NonDegeneracy}, \ref{ass:stationary} and \ref{ass:limit-case-elliptic}, there exists a constant $C>0$ such that
        $$\mathbb{P}(\Omega_m^c) \leq Cn^{-2}.$$
    \end{itemize}
\end{lemma}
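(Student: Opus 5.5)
The plan is to reduce the event $\Omega_m^c$ to a deviation of the empirical Gram matrix from $\Psi_m$, and then use a matrix concentration inequality for $\beta$-mixing sequences, via Berbee coupling.

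\textbf{Reduction to a matrix deviation.} For $f=\sum_i a_i\phi_i\in\mathcal{S}_m$ we have $\|f\|_n^2=\mathbf{a}'\widehat{\Psi}_m\mathbf{a}$ and $\|f\|_\pi^2=\mathbf{a}'\Psi_m\mathbf{a}$. Writing $\mathbf{b}=\Psi_m^{1/2}\mathbf{a}$ gives
$$\sup_{f\in\mathcal{S}_m,\ \|f\|_\pi=1}\left|\|f\|_n^2-1\right|=\left\|\Psi_m^{-1/2}\widehat{\Psi}_m\Psi_m^{-1/2}-I_m\right\|_{\mathrm{op}}.$$
Hence $\Omega_m^c$ is the event that this operator norm exceeds $1/2$. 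The summands are $K_k=\Psi_m^{-1/2}\vec\phi(X_{k\Delta_n})\vec\phi(X_{k\Delta_n})'\Psi_m^{-1/2}-I_m$, where $\vec\phi=(\phi_0,\dots,\phi_{m-1})'$. They are centred by Assumption~\ref{ass:stationary}, and they satisfy
$$\|K_k\|_{\mathrm{op}}\le \mathcal{L}(m)\|\Psi_m^{-1}\|_{\mathrm{op}}+1.$$
By Proposition~\ref{prop:operator-norm-identity}, together with the choice of $m$ (equation \eqref{eq:operator-norm-identity}), this is at most $C(n\Delta_n)^{\beta(s)}$, with $\beta(s)<1$ in the exponential case and $\beta(s)<1/2$ in the polynomial case. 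The variance proxy $\|\sum\mathbb{E}K_k^2\|_{\mathrm{op}}$ is bounded similarly by $\mathcal{L}(m)\|\Psi_m^{-1}\|_{\mathrm{op}}$ per term.

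\textbf{Blocking and coupling.} Next I split $\{0,\dots,n-1\}$ into $2p_n$ alternating blocks of length $q_n$, so $n=2p_nq_n$. Berbee's lemma replaces the blocks by independent copies $X^*$, at a cost
$$\mathbb{P}(X\neq X^*)\le p_n\,\beta_X(q_n\Delta_n).$$
On the coupled blocks, matrix Bernstein (Tropp) applies to the independent block sums $B_j=\sum_{k\in\text{block }j}K_k$. Their norms are at most $q_n(\mathcal{L}(m)\|\Psi_m^{-1}\|_{\mathrm{op}}+1)$, and the variance term is bounded by $nq_n\mathcal{L}(m)\|\Psi_m^{-1}\|_{\mathrm{op}}$. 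This gives
$$\mathbb{P}\left(\Big\|\tfrac1n\sum K_k\Big\|_{\mathrm{op}}>\tfrac12\right)\le 2m\exp\!\left(-\frac{c\,n}{q_n\,\mathcal{L}(m)\|\Psi_m^{-1}\|_{\mathrm{op}}}\right)+2p_n\beta_X(q_n\Delta_n).$$

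\textbf{Choice of block length in the two regimes.} In the exponential case I would take $q_n\Delta_n=\sqrt{\varepsilon_0}\log^2(n)/(4\Delta_n)\cdot\Delta_n$. Concretely, I choose $q_n$ so that $\gamma q_n\Delta_n\approx(\gamma/2)\sqrt{\varepsilon_0}\log^2 n$. Since $n\Delta_n^2=\varepsilon_0\log^4 n$ gives $\Delta_n=\sqrt{\varepsilon_0}\log^2 n/\sqrt n$, this means $q_n\asymp\sqrt n$. Then \eqref{eq:expo-beta-mixing} bounds the mixing term by
$$n\exp\!\left(-\tfrac{\gamma}{2}\sqrt{\varepsilon_0}\log^2 n\right)\le C\exp\!\left(-\tfrac{\gamma}{4}\sqrt{\varepsilon_0}\log^2 n\right),$$
where the factor $n$ is absorbed because $\log^2 n\gg\log n$. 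For the Bernstein term I use $n/q_n\asymp\sqrt n$ and $\mathcal{L}(m)\|\Psi_m^{-1}\|_{\mathrm{op}}\le C(n\Delta_n)^{\beta(s)}$ with $n\Delta_n\asymp\sqrt n\log^2 n$. This makes the exponent of order $n^{(1-\beta(s))/2}$ up to logarithms, which is far smaller than the target, and the factor $m$ is harmless.

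In the polynomial case, $n\Delta_n^2=1$ gives $\Delta_n=n^{-1/2}$. Here $\beta_X(t)\le C(1+t)^{-(k+1)}$ with $k$ large, since $r_0\ge16$. I take $q_n=n^{a}$ with $a\in(1/2,1)$. The mixing term is then $p_n(q_n\Delta_n)^{-(k+1)}=n^{1-a}n^{-(a-1/2)(k+1)}$, which is at most $n^{-2}$ once $k$ is large enough. The Bernstein exponent is $n^{1-a}/n^{\beta(s)/2}$ up to constants, and this is polynomially large, giving a super-polynomially small term, provided $a<1-\beta(s)/2$. That is compatible with $a>1/2$ because $\beta(s)<1/2$.

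\textbf{Main obstacle.} The main obstacle is balancing $q_n$ in the polynomial regime, so that the mixing decay beats $n^{-2}$ while the Bernstein exponent still diverges. This is exactly where the sharper bound $\beta(s)<1/2$ and the requirement that $r_0$ be large are used. In that regime I would first try $a=3/4-\beta(s)/4$.
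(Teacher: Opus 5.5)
Your proposal is correct and follows the paper's route: the same reduction to $\|\Psi_m^{-1/2}\widehat{\Psi}_m\Psi_m^{-1/2}-I_m\|_{\mathrm{op}}$, Berbee blocking with $n=2p_nq_n$, and a matrix concentration bound of the form $Cm\exp(-c\,p_n/(\mathcal{L}(m)\|\Psi_m^{-1}\|_{\mathrm{op}}))+2p_n\beta_X(q_n\Delta_n)$, with $q_n\asymp\sqrt n$ in the exponential regime and $q_n\asymp n^{a}$, $a\in(1/2,1)$, in the polynomial one; the paper imports this bound from Comte and Genon-Catalot's Chernoff-type argument, whereas you rederive it via matrix Bernstein.

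Two small points. First, your opening formula for $q_n\Delta_n$ (with the factor $1/4$) would leave an unabsorbed $\sqrt n$, so keep your ``concretely'' choice $q_n\Delta_n\approx\sqrt{\varepsilon_0}\log^2(n)/2$, i.e.\ $q_n=\sqrt n/2$ as in the paper. Second, your trial value $a=3/4-\beta(s)/4$ needs $k+1\ge(9+\beta(s))/(1-\beta(s))$, which is close to $19$ when $\beta(s)$ is near $1/2$; the paper's choice $q_n=\lceil(n\Delta_n)^{5/(r_0-1)}\Delta_n^{-1}\rceil$ (i.e.\ $a=1/2+5/(2(r_0-1))$, with $k+1=r_0-1$) already works for $r_0\ge16$.
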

The choice of dimension $m$ is highly sensible as it directly impacts the order of the Bayes term in the risk bound of projection estimators of $\sigma^2$. Then, the choice on $m$ is convenient, given that $n\Delta_n^2$ is either constant or of logarithmic growth. In addition, for the case of polynomially ergodic processes under Assumption~\ref{ass:limit-case-elliptic}, the result of Lemma~\ref{lm:omega-comp} remains valid if we assume that $n\Delta_n^2$ is a constant independent of $n$, $n\Delta_n^2 = 1$ being a particular case that we consider in the sequel for simplicity and without loss of generality. For the case of exponentially ergodic processes, the condition $n\Delta_n^2 = \varepsilon_0\log^4(n)$ is crucial for the derivation of a sharper bound in $\mathbb{P}\left(\Omega_m^c\right)$ and for the study of the risk bounds of adaptive estimators of $\sigma^2$ (see Section~\ref{sec:Adaptive}, proof of Theorem~\ref{thm:adaptation}). The numerical constant $\varepsilon_0 \in (0,10^{-1})$ is essential in a practical situation, used as a leverage to counter the explosion of $\log^4(n)$ when the numerical values of $n$ are not large enough, so that the time step $\Delta_n$ can remain close enough to zero (see Section~\ref{sec:Numerical-study} for more details). In the next section, we study the risk bounds of non-adaptive estimators of $\sigma^2$ considering both exponential ergodicity and polynomial ergodicity of the diffusion process.

\section{Rates of convergence of non-adaptive estimators}
\label{sec:consistency-rates}

This section is devoted to the study of convergence rates of non-adaptive estimators of $\sigma^2$ under each type of ergodic property on the diffusion process. In its current form, the results derived in this section follow from the technical results provided by Propositions~\ref{prop:Exp-Holder} and \ref{prop:operator-norm-identity} and Lemma~\ref{lm:omega-comp}. In this context, we define the set of possible values with respect to $n$ of the dimension $m$ of approximation spaces by
$$\mathcal{M}_n := \left\{1, \ldots, \left\lceil (n\Delta_n)^{2/(2s_0+1)}\right\rceil\right\}, ~~ n \rightarrow \infty,$$
where $s_0 = D+d+2$ for exponentially ergodic diffusion processes and $s_0 = 2d+3$ for polynomially ergodic processes. Recall that $\sigma^2$ is estimated on the interval $I = (l, +\infty)$ and we set $\sigma_I^2 = \sigma^2\mathds{1}_{I}$. The following theorem provides risk bounds for non-adaptive estimators of $\sigma_I^2$ for each type of ergodic property of the diffusion process. 

\begin{theo}\label{thm:upper-limit-NAE}
    Grant Assumptions~\ref{ass:drift}, \ref{ass:diffusion}, \ref{ass:NonDegeneracy} with $d > 8D$, \ref{ass:stationary} and \ref{ass:Regular-Sigma}. Moreover, assume that $L = \log(n)$ and $n \rightarrow \infty$. We have the following results.
    \begin{itemize}
        \item [(i)] Suppose that $n\Delta_n^2= \varepsilon_0\log^{4}(n) $. Under Assumption~\ref{ass:StrongAssDrift},  there exists a constant $C>0$ such that for all $m \in \mathcal{M}_n$,
        $$\mathbb{E}\left[\left\|\w{\sigma}_m^2 - \sigma_I^2\right\|_n^2\right] \leq 3\underset{f \in \mathcal{S}_{m,L}}{\inf}{\left\|f - \sigma_I^2\right\|_{\pi}^2} + C\left(\dfrac{m^{k_0}}{n} + m\log(n)\exp\left(-\frac{\gamma}{8}\sqrt{\varepsilon_0}\log^2(n)\right) + \Delta_n^{2}\right),$$
where $k_0 = D+d+3/2$.
        \item [(ii)] Assume that $n\Delta_n^2 = 1$. Under Assumption~\ref{ass:limit-case-elliptic}, there exists a constant $C>0$ such that for all $m \in \mathcal{M}_n$,
        $$\mathbb{E}\left[\left\|\w{\sigma}_m^2 - \sigma_I^2\right\|_n^2\right] \leq 3\underset{f \in \mathcal{S}_{m,L}}{\inf}{\left\|f - \sigma_I^2\right\|_{\pi}^2} + C\left(\dfrac{m^{d_0}}{n} + \dfrac{m\log(n)}{n^{2}} + \Delta_n^{2}\right),$$
where $d_0 = d+3/2$.
    \end{itemize}
\end{theo}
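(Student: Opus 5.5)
The argument follows the usual least-squares contrast route, in the style of Comte et al., split on the event $\Omega_m$ and its complement. Since $\w{\sigma}_m^2$ minimizes $\gamma_n$ over $\mathcal{S}_{m,L}$, for every $f\in\mathcal{S}_{m,L}$ we have $\gamma_n(\w{\sigma}_m^2)\le\gamma_n(f)$. Using the regression decomposition $U_{k\Delta_n}=\sigma^2(X_{k\Delta_n})+\xi_{k\Delta_n}+R_{k\Delta_n}$, this gives
\begin{equation*}
\|\w{\sigma}_m^2-\sigma_I^2\|_n^2\le\|f-\sigma_I^2\|_n^2+2\nu_n(\w{\sigma}_m^2-f)+\frac{2}{n}\sum_{k=0}^{n-1}R_{k\Delta_n}(\w{\sigma}_m^2-f)(X_{k\Delta_n}),
\end{equation*}
where $\nu_n(g)=\frac1n\sum_k\xi_{k\Delta_n}g(X_{k\Delta_n})$. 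We then apply $2xy\le x^2/8+8y^2$ to the residual term, and bound the linear term by $\|\w{\sigma}_m^2-f\|_\pi\sup_{g\in\mathcal{S}_m,\|g\|_\pi=1}\nu_n(g)$.

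On $\Omega_m$ the norms $\|\cdot\|_\pi$ and $\|\cdot\|_n$ are equivalent within $\mathcal{S}_m$. This lets us absorb the $\|\w{\sigma}_m^2-f\|^2$ contributions into the left-hand side, leaving the constant $3$ in front of $\|f-\sigma_I^2\|_\pi^2$ once we take expectations and use stationarity, $\mathbb{E}\|h\|_n^2=\|h\|_\pi^2$.

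\textbf{Variance term.} We bound $\mathbb{E}[\sup_{\|g\|_\pi=1}\nu_n^2(g)]$ by expanding $g=\sum_j a_j\phi_j$ with $\|\Psi_m^{1/2}a\|_2=1$. Cauchy--Schwarz then gives
\begin{equation*}
\sup_{\|g\|_\pi=1}\nu_n^2(g)\le\|\Psi_m^{-1}\|_{\mathrm{op}}\sum_j\nu_n^2(\phi_j).
\end{equation*}
For $\xi^{(1)}$ and $\xi^{(2)}$, which are martingale increments with respect to $\mathcal{F}_{(k+1)\Delta_n}$, the cross terms vanish. Itô isometry together with Burkholder--Davis--Gundy and Proposition~\ref{prop:Exp-Holder} (finiteness of $\mathbb{E}[\sigma^{2p}(X_0)]$ and moment bounds) gives $\mathbb{E}[\nu_n^2(\phi_j)]\le C\,\mathbb{E}[\phi_j^2(X_0)\sigma^4(X_0)]/n$. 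The term $\xi^{(3)}$ is also a martingale increment, of order $\Delta_n b^2\sigma^2/n$ after normalization. Summing over $j$ gives a bound of order $\mathcal{L}(m)\|\Psi_m^{-1}\|_{\mathrm{op}}/n$. Proposition~\ref{prop:operator-norm-identity} then yields $m^{k_0}/n$ in case (i) and $m^{d_0}/n$ in case (ii), where the factor $\max_j\|\phi_j\|_\infty^2$ is bounded uniformly for the Hermite and Laguerre bases.

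\textbf{Residual term.} Here we show $\mathbb{E}[R_{k\Delta_n}^2]\le C\Delta_n^2$. For $R^{(1)}$ this follows from Jensen's inequality and the moments of $b$. For $R^{(2)}$ it follows from Assumption~\ref{ass:Regular-Sigma} with polynomial moments, where the condition $d>8D$ guarantees integrability of $\Gamma^2$ under $\pi_X$. For $R^{(3)}$ we combine Cauchy--Schwarz, Proposition~\ref{prop:Exp-Holder}(i) (giving $\mathbb{E}|b(X_s)-b(X_{k\Delta_n})|^4\le C\Delta_n^2$) and BDG, which yields $C\Delta_n^{2}$.

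\textbf{Complement of $\Omega_m$.} We use the constraint $\|\w{\mathbf a}\|_2^2\le mL$, so that $\|\w{\sigma}_m^2\|_\infty^2\le\mathcal{L}(m)mL$. Together with $\mathcal{L}(m)\le Cm$, Cauchy--Schwarz and finite fourth moments of $\sigma^2(X_0)$, this gives
\begin{equation*}
\mathbb{E}\bigl[\|\w{\sigma}_m^2-\sigma_I^2\|_n^2\mathds 1_{\Omega_m^c}\bigr]\le C\,(m^2L+1)\,\mathbb{P}(\Omega_m^c)^{1/2}.
\end{equation*}
Lemma~\ref{lm:omega-comp} provides $\mathbb{P}(\Omega_m^c)^{1/2}\le C\exp(-\frac{\gamma}{8}\sqrt{\varepsilon_0}\log^2 n)$ in case (i), which explains the exponent $\gamma/8$ in the statement, and $Cn^{-1}$ in case (ii).

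\textbf{Main obstacle.} The hard step is the complement bound in case (ii): it must produce $m\log(n)/n^2$, so the crude estimate $m^2L\,n^{-1}$ is insufficient. I would first try to bound $\|\w{\sigma}_m^2\|_n^2$ directly by $\frac1n\sum_k U_{k\Delta_n}^2$, using that $\w{\mathbf a}$ is a constrained least-squares projection so $\|\widehat\Phi_m\w{\mathbf a}\|_2\le\|\mathbf U\|_2$. I would then use Hölder's inequality with a high moment, which is available since $\mathbb{E}[\sigma^{2p}]<\infty$ for all $p$ under Assumption~\ref{ass:limit-case-elliptic}, to obtain the factor $\mathbb{P}(\Omega_m^c)^{1-\epsilon}$. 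Lemma~\ref{lm:omega-comp} must hold uniformly over $m\in\mathcal{M}_n$, which is ensured by the choice of $s_0$ through \eqref{eq:operator-norm-identity}.
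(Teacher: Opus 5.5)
Your proposal is correct and follows the paper's proof essentially step by step. The shared steps are:
\begin{itemize}
\item the same basic inequality and absorption on $\Omega_m$ with constant $3$;
\item the same bound $\|\Psi_m^{-1}\|_{\mathrm{op}}\sum_j\nu_n^2(\phi_j)$, using martingale orthogonality, for the variance term;
\item the same $O(\Delta_n^2)$ residual estimates;
\item Lemma~\ref{lm:omega-comp} for the complement.
\end{itemize}
Comparing with $f\in\mathcal{S}_{m,L}$ rather than with $P_m(\sigma^2)$, as the paper does, is actually the cleaner choice, since $P_m(\sigma^2)$ need not satisfy the $\ell^2$ constraint.

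The only place you diverge is the $\Omega_m^c$ term, and the ``main obstacle'' there is self-inflicted. The bound $\|\w{\sigma}_m^2\|_n^2\le\mathcal{L}(m)mL$ holds deterministically, so you should not pass that part through Cauchy--Schwarz. The paper writes $2mL\,\mathbb{P}(\Omega_m^c)+2\sqrt{\mathbb{E}[\sigma^4(X_0)]\,\mathbb{P}(\Omega_m^c)}$: a full power of $\mathbb{P}(\Omega_m^c)$ on the estimator part, and a square root only on the $\sigma_I^2$ part. With your (correct) factor $\mathcal{L}(m)\le Cm$, the estimator part gives $Cm^2\log(n)/n^2\le Cm^{d_0}/n$ in case (ii), because $d_0>2$. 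The $\sigma_I^2$ part gives $C/n=C\Delta_n^2$, since $n\Delta_n^2=1$. So the target was never $m\log(n)/n^2$ specifically: any $O(1/n)$ is absorbed. Likewise, in case (i) your extra factor $m$ is swallowed by $m^{k_0}/n$.

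Your alternative fix also works. The constrained minimizer makes $\widehat{\Phi}_m\w{\mathbf a}$ the projection of $\mathbf U$ onto a compact convex set containing $0$, so $\|\w{\sigma}_m^2\|_n^2\le n^{-1}\sum_k U_{k\Delta_n}^2$. Hölder with $p\ge 2$ then gives $O(\mathbb{P}(\Omega_m^c)^{1-1/p})=O(n^{-1})$. This route does not use the $\ell^2$ constraint at all. Note, however, that the uniform moment bound on $U_{k\Delta_n}$ comes from the increment estimate $\mathbb{E}|X_{t+h}-X_t|^{4p}\le Ch^{2p}$ of Proposition~\ref{prop:Exp-Holder}. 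That estimate involves $b$ as well as $\sigma$, so finiteness of $\mathbb{E}[\sigma^{2p}(X_0)]$ alone is not enough.
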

The above theorem provides risk bounds for non-adaptive estimators $\widehat{\sigma}_m^2, ~ m \in \mathcal{M}_n$ for both exponentially and polynomially ergodic processes. In each case, the first term in the risk bound is the approximation error, the second is the estimation error, which is of order $m^{d_0}/n$ for polynomially ergodic processes and $m^{k_0}/n$ for exponentially ergodic processes. These estimation errors are established in the event $\Omega_m$. The risk in its complementary $\Omega_m^c$ is controlled by the third term in the risk bounds. Finally, the last term is the cost of time  discretization. Note that the obtained estimation errors with $k_0,d_0 > 1$ are not standard. These results are a direct consequence of the control of $\mathcal{L}(m)\|\Psi_m^{-1}\|_{\mathrm{op}}$ to avoid dimension truncation of approximation spaces. The following corollary extends the results of Theorem~\ref{thm:upper-limit-NAE} to the risks of estimation defined with metric $\|.\|_{\pi}$.

\begin{coro}\label{cor:upper-limit-NAE}
    Grant Assumptions~\ref{ass:drift}, \ref{ass:diffusion}, \ref{ass:NonDegeneracy} with $d > 8D$, \ref{ass:stationary}, \ref{ass:Regular-Sigma} and assume that $L = \log(n)$ and $n \rightarrow \infty$.  The results are as follows.
    \begin{itemize}
        \item [(i)] Suppose that $n\Delta_n^2 = \varepsilon_0\log^{4}(n)$. Under Assumption~\ref{ass:StrongAssDrift},  there exists a constant $C>0$ such that for all $m \in \mathcal{M}_n$,
        $$\mathbb{E}\left[\left\|\w{\sigma}_m^2 - \sigma_I^2\right\|_{\pi}^2\right] \leq 9\underset{f \in \mathcal{S}_{m,L}}{\inf}{\left\|f - \sigma_I^2\right\|_{\pi}^2} + C\left(\dfrac{m^{k_0}}{n} + m\log(n)\exp\left(-\frac{\gamma}{8}\sqrt{\varepsilon_0}\log^2(n)\right) + C\Delta_n^{2}\right),$$
where $k_0 = D+d+3/2$.
        \item [(ii)] Assume that $n\Delta_n^2 = 1$. Under Assumption~\ref{ass:limit-case-elliptic}, there exists a constant $C>0$ such that for all $m \in \mathcal{M}_n$,
        $$\mathbb{E}\left[\left\|\w{\sigma}_m^2 - \sigma_I^2\right\|_{\pi}^2\right] \leq 9\underset{f \in \mathcal{S}_{m,L}}{\inf}{\left\|f - \sigma_I^2\right\|_{\pi}^2} + C\left(\dfrac{m^{d_0}}{n} + \dfrac{m\log(n)}{n^{2}} + C\Delta_n^{2}\right),$$
where $d_0 = d+3/2$.
    \end{itemize}
\end{coro}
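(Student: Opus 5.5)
We derive Corollary~\ref{cor:upper-limit-NAE} from Theorem~\ref{thm:upper-limit-NAE} by passing from the empirical norm $\|\cdot\|_n$ to $\|\cdot\|_\pi$, using the norm equivalence on $\Omega_m$ and Lemma~\ref{lm:omega-comp} on $\Omega_m^c$. Fix $m\in\mathcal{M}_n$ and let $\sigma_m^2$ denote the $\|\cdot\|_\pi$-orthogonal projection of $\sigma_I^2$ onto $\mathcal{S}_{m,L}$ (or any near-minimiser $f_m$ of $\|f-\sigma_I^2\|_\pi^2$ over $\mathcal{S}_{m,L}$). We start from the decomposition
\begin{equation*}
\|\w\sigma_m^2-\sigma_I^2\|_\pi^2\le 2\|\w\sigma_m^2-f_m\|_\pi^2+2\|f_m-\sigma_I^2\|_\pi^2 ,
\end{equation*}
and split $\mathbb{E}[\|\w\sigma_m^2-f_m\|_\pi^2]$ according to $\Omega_m$ and $\Omega_m^c$.

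On $\Omega_m$, since $\w\sigma_m^2-f_m\in\mathcal{S}_m$, we have $\|\w\sigma_m^2-f_m\|_\pi^2\le 2\|\w\sigma_m^2-f_m\|_n^2\le 4\|\w\sigma_m^2-\sigma_I^2\|_n^2+4\|f_m-\sigma_I^2\|_n^2$. Taking expectations and using stationarity (Assumption~\ref{ass:stationary}), $\mathbb{E}\|f_m-\sigma_I^2\|_n^2=\|f_m-\sigma_I^2\|_\pi^2$, while $\mathbb{E}[\|\w\sigma_m^2-\sigma_I^2\|_n^2\mathds{1}_{\Omega_m}]$ is bounded by Theorem~\ref{thm:upper-limit-NAE}. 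This crude bookkeeping gives a constant larger than $9$ in front of the bias. To reach the stated constant $9$, we instead reuse the intermediate inequality from the proof of Theorem~\ref{thm:upper-limit-NAE} on $\Omega_m$. That inequality has the form $\|\w\sigma_m^2-\sigma_I^2\|_n^2\le 3\|f-\sigma_I^2\|_n^2+\text{(noise terms)}$, and we would apply it with the triangle inequality weighted as $(1+\epsilon)$, $(1+\epsilon^{-1})$. Combined with the factor $3/2$ from $\Omega_m$ this gives $3\cdot 3=9$. The noise terms contribute $Cm^{k_0}/n+C\Delta_n^2$ (respectively $Cm^{d_0}/n+C\Delta_n^2$) exactly as in the theorem, since they were controlled on $\Omega_m$ through Proposition~\ref{prop:operator-norm-identity} and Proposition~\ref{prop:Exp-Holder}.

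On $\Omega_m^c$, we use the $\ell^2$ constraint defining $\mathcal{S}_{m,L}$. Orthonormality in $\mathbb{L}^2$ and boundedness of the basis give $\|g\|_\pi^2\le\|g\|_\infty^2\le \mathcal{L}(m)\|\mathbf a\|_2^2\le C m\cdot mL$ for $g\in\mathcal{S}_{m,L}$. A sharper route is $\|g\|_\pi^2\le \|\pi_X\|_\infty\|g\|_{\mathbb{L}^2}^2\le C mL$, which uses boundedness of $\pi_X$, following from its explicit form \eqref{eq:InvariantDensity}. Hence
\begin{equation*}
\mathbb{E}\bigl[\|\w\sigma_m^2-f_m\|_\pi^2\mathds{1}_{\Omega_m^c}\bigr]\le C m\log(n)\,\mathbb{P}(\Omega_m^c).
\end{equation*}
Lemma~\ref{lm:omega-comp} then yields $Cm\log(n)\exp(-\tfrac{\gamma}{4}\sqrt{\varepsilon_0}\log^2 n)$ in case (i), which is dominated by the stated exponential term with $\gamma/8$, and $Cm\log(n)n^{-2}$ in case (ii). For $m\in\mathcal{M}_n$ not equal to the specific value in Lemma~\ref{lm:omega-comp}, we would note that its proof, a Bernstein/Berbee coupling argument using $\mathcal{L}(m)\|\Psi_m^{-1}\|_{\mathrm{op}}$, is monotone in $m$. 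The bound therefore holds uniformly over $\mathcal{M}_n$, just as is implicitly used in Theorem~\ref{thm:upper-limit-NAE}.

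The main obstacle is the bookkeeping of constants on $\Omega_m$, namely obtaining exactly $9$ rather than a larger constant. This requires working with the intermediate inequality from the theorem's proof rather than its final statement. The other delicate point is justifying the uniform bound on $\mathbb{P}(\Omega_m^c)$ over $m\in\mathcal{M}_n$.
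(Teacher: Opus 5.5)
Your architecture matches the paper's: norm equivalence on $\Omega_m$, Theorem~\ref{thm:upper-limit-NAE} there, and the $\ell^2$ constraint plus Lemma~\ref{lm:omega-comp} on $\Omega_m^c$. However, the step that is supposed to produce the constant $9$ fails.

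\textbf{The gap.} Two things go wrong.
\begin{itemize}
\item The norm-equivalence factor you need on $\Omega_m$ is $2$, from $\tfrac12\|g\|_\pi^2\le\|g\|_n^2$. The factor $3/2$ bounds $\|g\|_n^2$ by $\|g\|_\pi^2$, which is the wrong direction for a $\pi$-risk.
\item Your opening split $\|\widehat\sigma_m^2-\sigma_I^2\|_\pi^2\le2\|\widehat\sigma_m^2-f_m\|_\pi^2+2\|f_m-\sigma_I^2\|_\pi^2$ already doubles both pieces.
\end{itemize}
Pushing the coefficient $3$ of the intermediate inequality through a $(1+\epsilon,1+\epsilon^{-1})$ split then gives a bias coefficient $2+4\bigl(3(1+\epsilon)+1+\epsilon^{-1}\bigr)=18+12\epsilon+4\epsilon^{-1}\ge18+8\sqrt3$. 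Taking $r$ large in that inequality, so its coefficient $(1+6/r)/(1-6/r)$ tends to $1$, still leaves you at $18$ or more. So ``$3\cdot3=9$'' is not the output of any such chain.

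\textbf{The paper's route.} The paper uses $P_m(\sigma_I^2)$, the orthogonal projection onto the linear space $\mathcal{S}_m$ (a projection onto the ball $\mathcal{S}_{m,L}$ would give no such identity). Since $\widehat\sigma_m^2-P_m(\sigma_I^2)\in\mathcal{S}_m$, Pythagoras gives $\|\widehat\sigma_m^2-\sigma_I^2\|_\pi^2=\|\widehat\sigma_m^2-P_m(\sigma_I^2)\|_\pi^2+\|P_m(\sigma_I^2)-\sigma_I^2\|_\pi^2$. The bias thus enters once with coefficient $1$, and $\inf_{\mathcal{S}_m}\le\inf_{\mathcal{S}_{m,L}}$. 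On $\Omega_m$ the paper then uses $\|\widehat\sigma_m^2-P_m(\sigma_I^2)\|_n^2\le2\|\widehat\sigma_m^2-\sigma_I^2\|_n^2+2\|P_m(\sigma_I^2)-\sigma_I^2\|_n^2$, stationarity, and the final bound of Theorem~\ref{thm:upper-limit-NAE}, counting $2\cdot3+2+1=9$. That count, however, silently drops the factor $2$ when passing from $\|\cdot\|_\pi^2$ to $\|\cdot\|_n^2$ on $\Omega_m$. Kept, the same route gives $2(2\cdot3+2)+1=17$.

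\textbf{How to get close to $9$.} Your weighted-split idea works once combined with Pythagoras. Use $P_m(\sigma_I^2)$ for the $\pi$-decomposition, and your $f_m\in\mathcal{S}_{m,L}$ as the competitor in the intermediate inequality. Take $r$ large there, so its coefficient is $c_r=(1+6/r)/(1-6/r)$. The bias coefficient becomes $1+2\bigl(c_r(1+\epsilon)+1+\epsilon^{-1}\bigr)$, which tends to $9$ at $\epsilon=1$. This yields $9+\delta$ for any $\delta>0$, with noise constants depending on $\delta$. Neither your chain nor the paper's, done correctly, gives exactly $9$.

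\textbf{The rest is fine.} Your treatment of $\Omega_m^c$ works via $\|g\|_\pi^2\le\|\pi_X\|_\infty\|\mathbf a\|_2^2\le CmL$. If boundedness of $\pi_X$ at a finite endpoint is in doubt, the cruder $\mathcal{L}(m)$ bound costs an extra factor $m$, which is absorbed. Extending Lemma~\ref{lm:omega-comp} to all $m\in\mathcal{M}_n$ via monotonicity of $\mathcal{L}(m)\|\Psi_m^{-1}\|_{\mathrm{op}}$ (Cauchy interlacing) is also correct, and there you are more careful than the paper.
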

Let $R_0>0$, $s \geq s_0$ and $I \in (l, +\infty)$ with $l \in \mathbb{R} \cup \{-\infty\}$ and consider the space $$\mathcal{W}_{\pi}^s(I, R_0) = \left\{f \in \mathbb{L}^2(I, \pi_X(dx)), ~~ \forall m \geq 1, ~~ \left\|f - P_m(f)\right\|_{\pi}^2 \leq R_0m^{-2s}\right\},$$
where $P_m(f)$ is the orthogonal projection of $f$ onto $\mathcal{S}_m$. We also consider the space $\mathcal{C}^{0,\alpha}(I, \mathbb{R})$ of $\alpha$-H\"older continuous functions on $I$. Suppose that $\sigma^2 \in \mathcal{W}_{\pi}^s(I,R_0) \cap \mathcal{C}^{0,\alpha}(I, \mathbb{R})$ with $\alpha \in [1/2, 1]$. From Corollary~\ref{cor:upper-limit-NAE} and the assumptions therein, and using the bias-variance tradeoff, we obtain, under Assumption~\ref{ass:StrongAssDrift} and for $n\Delta_n^2 = \varepsilon_0\log^4(n)$, $m = \lfloor (n\Delta_n)^{2/(2s+k_0)}\rfloor \in \mathcal{M}_n, ~ s \geq s_0 = k_0 + 1/2$ and
$$\mathbb{E}\left[\left\|\w{\sigma}_m^2 - \sigma_I^2\right\|_{\pi}^2\right] \leq 9R_0m^{-2s} +  C\left(\dfrac{m^{k_0}}{n} + \Delta_n^{2}\right) \leq C\log^4(n)n^{-2s/(2s+k_0)},$$
where $C>0$ is a new constant. Under Assumption~\ref{ass:limit-case-elliptic} with $m = \lfloor (n\Delta_n)^{2/(2s+d_0)}\rfloor, ~ s \geq s_0 = 2d+3$ and $n\Delta_n^2 = 1$, we obtain the following:
$$\mathbb{E}\left[\left\|\w{\sigma}_m^2 - \sigma_I^2\right\|_{\pi}^2\right] \leq 9R_0m^{-2s} +  C\left(\dfrac{m^{d_0}}{n} + \Delta_n^{2}\right) \leq Cn^{-2s/(2s+d_0)},$$
where $C>0$ is another constant. Then, we obtain an explicit rate of order $\log^2(n)n^{-s/(2s+k_0)}$, with $s \geq k_0+1/2$, for exponentially ergodic diffusion processes with an unbounded diffusion coefficient. The derivation of an explicit rate was possible due to the polynomial growth of modulus $\rho$, then of the diffusion coefficient $\sigma$ in the neighborhood of infinity. For polynomially ergodic diffusion processes, we obtain a rate of order $n^{-s/(2s+d_0)}$ with $s \geq d_0 + 3/2$. The difference between the convergence rates obtained in the two ergodicity regimes is mainly due to the assumptions on the diffusion coefficient $\sigma$. In particular, the possibility that $\sigma$ is unbounded naturally leads to slower convergence. Intuitively, this can be understood through the diffusive effect on the dynamics of the process, as a growing $\sigma$ without bound leads to stronger stochastic fluctuations far from the origin (the strength of this effect being quantified by $D$), making excursions toward infinity more likely and thus slowing the return mechanism responsible for ergodic behavior.\\

Note that the above results are established under a stronger regularity assumption on the diffusion coefficient. If we are in a more general setting where $b$ and $\sigma$ are only continuous on $(l, +\infty)$, then we consider the following regression model:    
$$ U_{k\Delta_n} = \sigma^2(X_{k\Delta_n}) + \zeta_{k\Delta_n} + \widetilde{R}_{k\Delta_n}, ~~ k \in \{0, \ldots, n-1\},$$
where and $\zeta_{k\Delta_n} + \widetilde{R}_{k\Delta_n}$ is the error term with
\begin{equation}\label{eq:Error-term2}
    \zeta_{k\Delta_n} := \dfrac{1}{\Delta_n}\sigma^2(X_{k\Delta_n})\left[\left(W_{(k+1)\Delta_n} - W_{k\Delta_n}\right)^2 - \Delta_n\right],
\end{equation}
and $\widetilde{R}_{k\Delta_n} = \widetilde{R}_{k\Delta_n}^1 + \widetilde{R}_{k\Delta_n}^2 + \widetilde{R}_{k\Delta_n}^3$ with
\begin{equation}\label{eq:Residual2}
    \begin{aligned}
        \widetilde{R}_{k\Delta_n}^{(1)} := &~ \dfrac{1}{\Delta_n}\left(\int_{k\Delta_n}^{(k+1)\Delta_n}b(X_s)ds\right)^2 + \dfrac{1}{\Delta_n}\left(\int_{k\Delta_n}^{(k+1)\Delta_n}(\sigma(X_s) - \sigma(X_{k\Delta_n}))dW_s\right)^2,\\
        \widetilde{R}_{k\Delta_n}^{(2)} := &~ \dfrac{2}{\Delta_n}\int_{k\Delta_n}^{(k+1)\Delta_n}\left(b(X_s) - b(X_{k\Delta_n})\right)ds\int_{k\Delta_n}^{(k+1)\Delta_n}\sigma(X_s)dW_s,\\
        \widetilde{R}_{k\Delta_n}^{(3)} := &~ \dfrac{2}{\Delta_n}\sigma(X_{k\Delta_n})\left(W_{(k+1)\Delta} - W_{k\Delta_n}\right)\int_{k\Delta_n}^{(k+1)\Delta_n}(\sigma(X_s) - \sigma(X_{k\Delta_n}))dW_s.
    \end{aligned}
\end{equation}
We then obtain the following result.
\begin{theo}\label{thm:upper-limit2-NAE}
    Grant Assumptions~\ref{ass:drift}, \ref{ass:diffusion}, \ref{ass:NonDegeneracy} and \ref{ass:stationary} and assume that $L = \log(n)$ and $n \rightarrow \infty$. We have the following results.
    \begin{itemize}
        \item [(i)] Suppose that $n\Delta_n^2= \varepsilon_0\log^{4}(n)$ and $d > 8D$. Under Assumption~\ref{ass:StrongAssDrift},  there exists a constant $C>0$ such that for all $m \in \mathcal{M}_n$,
        \begin{equation*}
    \begin{aligned}
        \mathbb{E}\left[\left\|\w{\sigma}_m^2 - \sigma_I^2\right\|_{\pi}^2\right] \leq &~ 9\underset{f \in \mathcal{S}_{m,L}}{\inf}{\left\|f - \sigma_I^2\right\|_{\pi}^2} + C\left(\dfrac{m^{k_0}}{n} + m\log(n)\exp\left(-\frac{\gamma}{8}\sqrt{\varepsilon_0}\log^2(n)\right) + \Delta_n^{\alpha}\right),
    \end{aligned}
\end{equation*}
where $k_0 = D+d+3/2$ and $\alpha \in [1/2,1]$.
        \item [(ii)] Assume that $n\Delta_n^2 = 1$ and $d > 4$. Under Assumption~\ref{ass:limit-case-elliptic}, there exists a constant $C>0$ such that for all $m \in \mathcal{M}_n$,
        $$\mathbb{E}\left[\left\|\w{\sigma}_m^2 - \sigma_I^2\right\|_{\pi}^2\right] \leq 9\underset{f \in \mathcal{S}_{m,L}}{\inf}{\left\|f - \sigma_I^2\right\|_{\pi}^2} + C\left(\dfrac{m^{d_0}}{n} + \dfrac{m\log(n)}{n^{2}} + \Delta_n^{\alpha}\right), ~~ \alpha \in [1/2, 1].$$
    \end{itemize}
\end{theo}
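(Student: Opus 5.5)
The proof will mirror that of Theorem~\ref{thm:upper-limit-NAE} and Corollary~\ref{cor:upper-limit-NAE}, replacing the decomposition \eqref{eq:Error-term}--\eqref{eq:Residual} by the decomposition \eqref{eq:Error-term2}--\eqref{eq:Residual2}. No $\mathcal{C}^2$-regularity (Assumption~\ref{ass:Regular-Sigma}) is used there. The price is a discretization term $\Delta_n^{\alpha}$ in place of $\Delta_n^2$.

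First, I start from the minimisation property $\gamma_n(\w{\sigma}_m^2)\le\gamma_n(f)$ for all $f\in\mathcal{S}_{m,L}$. Writing $U=\sigma^2(X)+\zeta+\widetilde R$, this gives
\[
\|\w{\sigma}_m^2-\sigma_I^2\|_n^2\le\|f-\sigma_I^2\|_n^2+2\nu_n(\w{\sigma}_m^2-f)+\frac{2}{n}\sum_{k}\widetilde R_{k\Delta_n}(\w{\sigma}_m^2-f)(X_{k\Delta_n}),
\]
where $\nu_n(g)=\frac1n\sum_k\zeta_{k\Delta_n}g(X_{k\Delta_n})$. I then split along $\Omega_m$ and $\Omega_m^c$.

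On $\Omega_m$, I use $2xy\le x^2/8+8y^2$ together with the norm equivalence $\|g\|_\pi^2\le2\|g\|_n^2$ for $g\in\mathcal{S}_m$. The empirical-process term is bounded by $\sup_{g\in\mathcal{S}_m,\|g\|_\pi=1}\nu_n(g)^2\le\sum_j\nu_n(\psi_j)^2$, where $(\psi_j)$ is a $\pi$-orthonormal basis of $\mathcal{S}_m$. The $\zeta_{k\Delta_n}$ are martingale increments with conditional variance $2\sigma^4(X_{k\Delta_n})$. Hence
\[
\mathbb{E}\sum_j\nu_n(\psi_j)^2\le \frac{2}{n}\,\mathbb{E}\Bigl[\sigma^4(X_0)\sum_j\psi_j^2(X_0)\Bigr]\le \frac{C}{n}\,\mathcal{L}(m)\|\Psi_m^{-1}\|_{\mathrm{op}}.
\]
This uses $\mathbb{E}[\sigma^{2p}(X_0)]<\infty$ from Proposition~\ref{prop:Exp-Holder} (with $p=2<d/D$) and the bound $\sum_j\psi_j^2\le\|\Psi_m^{-1}\|_{\mathrm{op}}\sum_j\phi_j^2$. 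Proposition~\ref{prop:operator-norm-identity} then yields the variance terms $m^{k_0}/n$ and $m^{d_0}/n$. Making this rigorous requires the $\sigma^4$ weight to be handled through Cauchy--Schwarz with $\|\sum_j\psi_j^2\|_\infty$; I accept the constant this produces. The residual term is bounded by $C\,\frac1n\sum_k\mathbb{E}[\widetilde R_{k\Delta_n}^2]$.

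The residual bound is the main obstacle. For $\widetilde R^{(1)}$, the drift part is $O(\Delta_n)$ in $\mathbb{L}^2$, using moments of $b(X_0)$; this in turn uses the fact that $b$ has at most a modulus growth controlled by the concavity of $\kappa$. For the second part, I apply Burkholder--Davis--Gundy and Proposition~\ref{prop:Exp-Holder}(ii):
\[
\mathbb{E}\Bigl[\Delta_n^{-2}\Bigl(\int(\sigma(X_s)-\sigma(X_{k\Delta_n}))dW_s\Bigr)^4\Bigr]\le C\Delta_n^{-2}\Delta_n\int_{k\Delta_n}^{(k+1)\Delta_n}\mathbb{E}|\sigma(X_s)-\sigma(X_{k\Delta_n})|^4ds\le C\Delta_n^{2\alpha}.
\]
For $\widetilde R^{(2)}$, Cauchy--Schwarz combined with Proposition~\ref{prop:Exp-Holder}(i) gives order $\Delta_n\cdot\Delta_n^{1/2}\cdot\Delta_n^{1/2}/\Delta_n\cdot\Delta_n^{1/2}$, which is negligible. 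For $\widetilde R^{(3)}$, Hölder's inequality with moments of $\sigma(X_0)$ and again Proposition~\ref{prop:Exp-Holder}(ii) give $\mathbb{E}[(\widetilde R^{(3)})^2]\le C\Delta_n^{\alpha}$. This term is the dominant one, and it is where the exponent drops from $2$ to $\alpha$; the moment conditions $d>8D$ and $d>4$ are needed here to keep the Hölder exponents admissible.

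On $\Omega_m^c$, the constraint $\|\widehat{\mathbf a}\|^2\le mL$ gives $\|\w{\sigma}_m^2\|_\infty^2\le mL\,\mathcal{L}(m)/m$. Therefore
\[
\mathbb{E}\bigl[\|\w{\sigma}_m^2-\sigma_I^2\|_n^2\mathds 1_{\Omega_m^c}\bigr]\le C\bigl(m\log n+\mathbb{E}[\sigma^8(X_0)]^{1/2}\bigr)\mathbb{P}(\Omega_m^c)^{1/2}.
\]
Lemma~\ref{lm:omega-comp} then gives $m\log(n)\exp(-\frac{\gamma}{8}\sqrt{\varepsilon_0}\log^2 n)$ in case (i) and $m\log(n)/n$ in case (ii). In case (ii), the stated $m\log(n)/n^2$ follows if one applies the lemma with a higher polynomial order; this is available since $r_0\ge16$ gives faster $\beta$-mixing.

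Finally, I pass from $\|\cdot\|_n$ to $\|\cdot\|_\pi$ exactly as in Corollary~\ref{cor:upper-limit-NAE}. On $\Omega_m$, I write $\|\w{\sigma}_m^2-\sigma_I^2\|_\pi^2\le2\|\w{\sigma}_m^2-f\|_\pi^2+2\|f-\sigma_I^2\|_\pi^2$ and use $\|\cdot\|_\pi^2\le2\|\cdot\|_n^2$ on $\mathcal{S}_m$. This produces the constant $9$ in front of the bias.
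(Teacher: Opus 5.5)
Your proposal follows the paper's proof in substance. It uses the same decomposition \eqref{eq:Error-term2}--\eqref{eq:Residual2} and the same martingale-increment variance bound of order $\mathcal{L}(m)\|\Psi_m^{-1}\|_{\mathrm{op}}/n$ fed into Proposition~\ref{prop:operator-norm-identity}. The residual orders are the same: $\Delta_n^{2\alpha}$, $\Delta_n^{2}$, and the dominant $\Delta_n^{\alpha}$ from $\widetilde{R}^{(3)}$. So are the split along $\Omega_m$ with Lemma~\ref{lm:omega-comp} and the passage to $\|\cdot\|_\pi$ of Corollary~\ref{cor:upper-limit-NAE}. Comparing with an arbitrary $f\in\mathcal{S}_{m,L}$ rather than $P_m(\sigma^2)$ is, if anything, cleaner, since $P_m(\sigma^2)$ need not lie in $\mathcal{S}_{m,L}$.

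\textbf{The $\Omega_m^c$ term.} This is the step to fix.
\begin{itemize}
\item Cauchy--Schwarz gives $\|\w{\sigma}_m^2\|_\infty^2\le\|\widehat{\mathbf a}\|_{2,m}^2\,\mathcal{L}(m)\le mL\,\mathcal{L}(m)\le Cm^2\log(n)$; there is no division by $m$.
\item Since this bound is deterministic, you may multiply it by $\mathbb{P}(\Omega_m^c)$ itself rather than its square root. Only the $\sigma^4$ part needs $\sqrt{\mathbb{E}[\sigma^8(X_0)]\,\mathbb{P}(\Omega_m^c)}$.
\item Your remedy for case (ii), a ``higher polynomial order'' version of Lemma~\ref{lm:omega-comp}, is not justified. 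With $\beta_X(t)\le C(1+t)^{1-r_0}$ and $r_0=16$, guaranteeing $2p_n\beta_X(q_n\Delta_n)\le n^{-4}$ needs $p_n\le(n\Delta_n)^{7/16}$. But $p_n$ must also dominate $\mathcal{L}(m)\|\Psi_m^{-1}\|_{\mathrm{op}}$, whose only available bound for $m$ near $m_{\max}$ is of order $(n\Delta_n)^{(2d+3)/(4d+7)}$, and $(2d+3)/(4d+7)>7/16$.
\item The remedy is also unnecessary for this theorem. In case (ii), $m\le m_{\max}=\lceil n^{1/(2s_0+1)}\rceil$ with $s_0=2d+3>11$. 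So even the crude $Cm^2\log(n)\sqrt{\mathbb{P}(\Omega_m^c)}\le Cm^2\log(n)/n$, and the $C/n$ term, are $o(n^{-1/2})=o(\Delta_n^{\alpha})$ and are absorbed into $C\Delta_n^{\alpha}$.
\item In case (i), any polynomial factor in $m$ is negligible against $\exp(-\frac{\gamma}{8}\sqrt{\varepsilon_0}\log^2(n))$.
\end{itemize}

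\textbf{The constant $9$.} Your split $\|\w{\sigma}_m^2-\sigma_I^2\|_\pi^2\le 2\|\w{\sigma}_m^2-f\|_\pi^2+2\|f-\sigma_I^2\|_\pi^2$ does not yield $9$; it gives a constant several times larger. The paper instead uses the Pythagorean identity with $P_m(\sigma^2)$ in \eqref{eq:pi-norm-risk}. Even there, $9$ is reached only by dropping the factor $2$ in $\|\cdot\|_\pi^2\le 2\|\cdot\|_n^2$ on $\Omega_m$, so the leading constant is bookkeeping rather than substance.
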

Suppose that $\sigma^2 \in \mathcal{W}_{\pi}^s(I,R_0) \cap \mathcal{C}^{0,\alpha}(I, \mathbb{R})$ with $\alpha \in [1/2,1]$. For exponentially ergodic diffusion processes with $n\Delta_n^2 = \varepsilon_0\log^4(n)$ and $m = \lfloor (n\Delta_n)^{2/(2s+k_0)}\rfloor \in \mathcal{M}_n, ~ s \geq s_0 = d+D+2$, there exists a constant $C>0$ such that
$$\mathbb{E}\left[\left\|\w{\sigma}_m^2 - \sigma_I^2\right\|_{\pi}^2\right] \leq C\log^{2\alpha}(n)n^{-\alpha/2}, ~~ \alpha \in [1/2,1].$$ 
The obtained bound reveals that $\widetilde{R}_{k\Delta_n}$ is the real error term, the other terms being negligible residuals (see Proof of Theorem~\ref{thm:upper-limit2-NAE}). Similarly, for polynomially ergodic diffusion processes with $n\Delta_n^2 = 1$ and $m = \lfloor (n\Delta_n)^{2/(2s+d_0)} \rfloor \in \mathcal{M}_n, ~ s \geq s_0 = 2d+3$, there exists a constant $C>0$ such that 
$$\mathbb{E}\left[\left\|\w{\sigma}_m^2 - \sigma_I^2\right\|_{\pi}^2\right] \leq Cn^{-\alpha/2}, ~~ \alpha \in [1/2,1].$$
In both cases, we obtain convergence rates that depend on the smoothness parameter $\alpha \in [1/2,1]$ of $\mathcal{C}^{0,\alpha}((l,+\infty), \mathbb{R})$. These are slower rates compared to that of Theorem~\ref{thm:upper-limit-NAE} obtained under stronger assumptions on $b$ and $\sigma$. 

In the next section, we proceed to model selection for practical situations and the derivation of risk bounds for adaptive estimators. 

\section{Adaptive estimation - Model selection}
\label{sec:Adaptive}

We proceed to the selection of the optimal dimension of the approximation space, crucial for practical situations. Formally, we consider the adaptive estimator $\widehat{\sigma}_{\widehat{m}}^2$ of $\sigma_I^2$ built from the diffusion path $(X_{k\Delta_n})_{k = 0,\ldots,n}$, the optimal dimension $\widehat{m}$ being selected as follows:
\begin{equation}\label{eq:select-dimension}
    \widehat{m} := \underset{m \in \mathcal{M}_n}{\inf}\left\{\gamma_n(\widehat{\sigma}_m^2) + \mathrm{pen}(m)\right\},
\end{equation}
where for each $m \in \mathcal{M}_n = \left\{1, \ldots, \left\lceil (n\Delta_n)^{2/(2s_0+1)} \right\rceil\right\}$, $\widehat{\sigma}_m^2$ is a non-adaptive estimator of $\sigma_I^2$ built from $(X_{k\Delta_n})_{k=0, \ldots, n}$ and $m \mapsto \mathrm{pen}(m)$ is the penalty function derived using Talagrand's inequality. The following theorem provides risk bounds for adaptive estimators of $\sigma_I^2$ for exponentially and polynomially ergodic diffusion processes.  
\begin{theo}\label{thm:adaptation}
    Grant Assumptions~\ref{ass:drift}, \ref{ass:diffusion}, \ref{ass:NonDegeneracy}, \ref{ass:stationary}, \ref{ass:Regular-Sigma} and suppose that $d> 8D, ~ L = \log(n)$ and $n \rightarrow \infty$. We obtain the following results.
    \begin{enumerate}
        \item Suppose that $n\Delta_n^2 = \varepsilon_0\log^4(n)$ with $\varepsilon_0 \in (0, 10^{-1})$. Under Assumption~\ref{ass:StrongAssDrift}, there exists a constant $C>0$ such that
    \begin{equation*}
        \begin{aligned}
           \mathbb{E}\left[\left\|\w{\sigma}_{\w{m}}^2 - \sigma_I^2\right\|_{\pi}^2\right] \leq &~ 9\underset{m \in \mathcal{M}_n}{\inf}\left\{\underset{f \in \mathcal{S}_{m,L}}{\inf}{\left\|f - \sigma^2\right\|_{\pi}^2} + \mathrm{pen}_1(m)\right\} + C\dfrac{\log^4(n)}{n},
        \end{aligned}
    \end{equation*}
    where $\mathrm{pen}_1: m \mapsto \tau_1 m^{k_0}/n$ is the penalty function with $\tau_1 > 0$ a constant depending on $\sigma$.
    \item Suppose that $n\Delta_n^2 = 1$. Under Assumption~\ref{ass:limit-case-elliptic}, there exists a constant $C>0$ such that
    \begin{equation*}
        \begin{aligned}
           \mathbb{E}\left[\left\|\w{\sigma}_{\w{m}}^2 - \sigma_I^2\right\|_{\pi}^2\right] \leq &~ 9\underset{m \in \mathcal{M}_n}{\inf}\left\{\underset{f \in \mathcal{S}_{m,L}}{\inf}{\left\|f - \sigma^2\right\|_{\pi}^2} + \mathrm{pen}_2(m)\right\} + \dfrac{C}{n},
        \end{aligned}
    \end{equation*}
    where $\mathrm{pen}_2: m \mapsto \tau_2 m^{d_0}/n$ is the penalty function with $\tau_2 > 0$ a constant depending on $\sigma$.
    \end{enumerate}
\end{theo}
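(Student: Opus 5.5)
Proof plan. We follow the standard penalized least-squares scheme of \cite{comte2007penalized}, adapted to the two ergodicity regimes. Both cases are treated the same way. Only the bounds on $\mathbb{P}(\Omega_m^c)$ (Lemma~\ref{lm:omega-comp}) and on $\mathcal{L}(m)\|\Psi_m^{-1}\|_{\mathrm{op}}$ (Proposition~\ref{prop:operator-norm-identity}) change.

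\textbf{Basic inequality.} Fix $m\in\mathcal{M}_n$ and $f\in\mathcal{S}_{m,L}$. By definition of $\widehat{m}$ and $\widehat{\sigma}_m^2$, we have $\gamma_n(\widehat{\sigma}_{\widehat m}^2)+\mathrm{pen}(\widehat m)\le \gamma_n(f)+\mathrm{pen}(m)$. Expanding with $U_{k\Delta_n}=\sigma^2(X_{k\Delta_n})+\xi_{k\Delta_n}+R_{k\Delta_n}$ gives
$$\|\widehat{\sigma}_{\widehat m}^2-\sigma_I^2\|_n^2\le \|f-\sigma_I^2\|_n^2+2\nu_n(\widehat{\sigma}_{\widehat m}^2-f)+\frac{2}{n}\sum_k R_{k\Delta_n}(\widehat{\sigma}_{\widehat m}^2-f)(X_{k\Delta_n})+\mathrm{pen}(m)-\mathrm{pen}(\widehat m),$$
where $\nu_n(t)=\frac1n\sum_k \xi_{k\Delta_n}t(X_{k\Delta_n})$. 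Because the collection is nested, $\widehat{\sigma}_{\widehat m}^2-f\in\mathcal{S}_{m\vee\widehat m}$. We then apply $2xy\le x^2/8+8y^2$ and, on $\Omega:=\bigcap_{m'\in\mathcal{M}_n}\Omega_{m'}$, use $\|t\|_\pi^2\le 2\|t\|_n^2$. This bounds the $\nu_n$ term by $\frac18\|\widehat{\sigma}_{\widehat m}^2-f\|_\pi^2+8\sup_{t\in B_{m,\widehat m}}\nu_n^2(t)$, where $B_{m,m'}$ is the $\|\cdot\|_\pi$-unit ball of $\mathcal{S}_{m\vee m'}$. The residual term is bounded by $\frac18\|\cdot\|_n^2+\frac{C}{n}\sum_k R_{k\Delta_n}^2$. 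By Proposition~\ref{prop:Exp-Holder} and Assumption~\ref{ass:Regular-Sigma}, $\mathbb{E}[R_{k\Delta_n}^2]\le C\Delta_n^2$. Since $\Delta_n^2=\varepsilon_0\log^4(n)/n$ in case~1 and $\Delta_n^2=1/n$ in case~2, this contributes $C\log^4(n)/n$ and $C/n$ respectively.

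\textbf{Talagrand step (main obstacle).} We must find a function $p(m,m')$ such that
$$\sum_{m'\in\mathcal{M}_n}\mathbb{E}\Big[\big(\sup_{t\in B_{m,m'}}\nu_n^2(t)-p(m,m')\big)_+\mathds{1}_\Omega\Big]\le C\log^4(n)/n\quad(\text{resp. } C/n),$$
and then set $\mathrm{pen}(m)\ge 8p(m,m)$ so that $8p(m,\widehat m)\le \mathrm{pen}(m)+\mathrm{pen}(\widehat m)$. We split $\xi=\xi^{(1)}+\xi^{(2)}+\xi^{(3)}$.
\begin{itemize}
\item The terms $\xi^{(2)}$ and $\xi^{(3)}$ have conditional variance of order $\Delta_n$. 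Their suprema are controlled by the crude bound $\sup_{B}\nu^2\le \sum_j\nu^2(\varphi_j)$ with $\varphi_j$ a $\pi$-orthonormal basis of $\mathcal{S}_{m\vee m'}$. Summed over $\mathcal{M}_n$, this gives a negligible contribution.
\item For $\xi^{(1)}$, a martingale increment, we first truncate $\sigma^2$ at level $\log(n)$ with moments from Proposition~\ref{prop:Exp-Holder}. We then apply the Bernstein-type/Talagrand inequality for martingales in the form used in \cite{comte2007penalized}. We use
$$\mathbb{E}\big[\sup_B\nu_n^2\big]\le \frac{C}{n}\sum_j\mathbb{E}\big[\sigma^4(X_0)\varphi_j^2(X_0)\big]\le \frac{C\,\mathbb{E}[\sigma^4(X_0)]}{n}\,\mathcal{L}(m\vee m')\|\Psi_{m\vee m'}^{-1}\|_{\mathrm{op}},$$
and $\sup_B\|t\|_\infty^2\le \mathcal{L}\|\Psi^{-1}\|_{\mathrm{op}}$. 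Proposition~\ref{prop:operator-norm-identity} then yields $H^2\asymp (m\vee m')^{k_0}/n$ (resp. $(m\vee m')^{d_0}/n$). Hence we take $p(m,m')=\kappa\,(m\vee m')^{k_0}/n$, and $\mathrm{pen}_1(m)=\tau_1m^{k_0}/n$, $\mathrm{pen}_2(m)=\tau_2 m^{d_0}/n$ with $\tau_i\propto \mathbb{E}[\sigma^4(X_0)]$, which explains the dependence on $\sigma$.
\item The exponential remainders of Talagrand's bound are summable over $|\mathcal{M}_n|\le (n\Delta_n)^{2/(2s_0+1)}$. This is where the restriction of $\mathcal{M}_n$ (via $s_0$) is used, together with the bound \eqref{eq:operator-norm-identity} $\mathcal{L}\|\Psi^{-1}\|_{\mathrm{op}}\le C(n\Delta_n)^{\beta(s_0)}$ with $\beta<1$.
\end{itemize}
The dependence between observations is handled by Berbee coupling with blocks of length $\asymp \log^2(n)$: exponential mixing \eqref{eq:expo-beta-mixing} in case~1, and polynomial mixing with $r_0$ large in case~2. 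This is the delicate point.

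\textbf{Conclusion.} On $\Omega$, gathering the bounds and using $\|\widehat{\sigma}_{\widehat m}^2-f\|_\pi^2\le 2\|\widehat{\sigma}_{\widehat m}^2-\sigma_I^2\|_\pi^2+2\|f-\sigma_I^2\|_\pi^2$, absorbing the $\frac18$ terms, and passing from $\|\cdot\|_n$ to $\|\cdot\|_\pi$ as in Corollary~\ref{cor:upper-limit-NAE}, we get the constant $9$. On $\Omega^c$, the constraint $\|\mathbf a\|_2^2\le mL$ gives $\|\widehat{\sigma}_{\widehat m}^2\|_\pi^2\le C\,m\log(n)$. Combined with $\mathbb{E}[\sigma^4]<\infty$ and Cauchy--Schwarz, this yields a contribution of order $|\mathcal{M}_n|\,m\log(n)\,\mathbb{P}(\Omega_{m}^c)^{1/2}$. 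By Lemma~\ref{lm:omega-comp}, this is of order $\exp(-\frac{\gamma}{8}\sqrt{\varepsilon_0}\log^2 n)\cdot\mathrm{poly}(n)=o(\log^4(n)/n)$ in case~1, and $O(n^{-1})$ in case~2, provided the lemma's $n^{-2}$ is uniform over $\mathcal{M}_n$ with some slack in the powers. Finally we take the infimum over $f$ and $m$.
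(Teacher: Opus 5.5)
Your architecture matches the paper's. You use the comparison inequality, crude $\|\Psi^{-1}\|_{\mathrm{op}}$-bounds for $\xi^{(2)},\xi^{(3)}$ (these give $m_{\max}^{k_0}\Delta_n/n\lesssim\Delta_n^2$), and a concentration step for $\xi^{(1)}$ yielding $\mathrm{pen}\propto\mathcal{L}(m)\|\Psi_m^{-1}\|_{\mathrm{op}}/n$. You then use Lemma~\ref{lm:omega-comp} for $\Omega^c$ and pass to $\|\cdot\|_\pi$. Comparing with a deterministic $f\in\mathcal{S}_{m,L}$ is actually more careful than the paper's use of $P_m(\sigma^2)$, which need not lie in $\mathcal{S}_{m,L}$.

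\textbf{Main gap: the truncation in the step you flag.} Truncating $\sigma^2$ at level $\log(n)$ fails for two reasons.
\begin{itemize}
\item It does not make the summands bounded. $\xi^{(1)}_{k\Delta_n}$ still contains $\Delta_n^{-1}\bigl(\int_{k\Delta_n}^{(k+1)\Delta_n}\sigma(X_s)\,dW_s\bigr)^2$, so the sup-norm hypothesis of Talagrand's inequality is not met.
\item More seriously, Proposition~\ref{prop:Exp-Holder} only gives $\mathbb{E}[\sigma^{2p}(X_0)]<\infty$ for $p<d/D$. So the discarded part has second moment of order $(\log n)^{-(p-2)}$ at best. The natural bound on its contribution is then $m_{\max}^{k_0}(\log n)^{-(p-2)}/n$, with $m_{\max}^{k_0}=(n\Delta_n)^{k_0/(k_0+1)}$ a positive power of $n$. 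This is not $O(\log^4(n)/n)$. An expectation bound also cannot be absorbed into $\mathrm{pen}(\widehat m)$, because $\widehat m$ is random.
\end{itemize}
The level must therefore be polynomial in $n$. The paper truncates $\Delta_n\xi^{(1)}_{k\Delta_n}$ itself at $a_n=n^{1/4}\Delta_n$ and uses Markov's inequality with eight moments (this is where $d>8D$ enters) to get \eqref{eq:adaptive2-3}.

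\textbf{Caveat on the paper's version.} The level must also keep the sup-norm remainder of the inequality small, and the paper's version does not achieve this. In the proof of Lemma~\ref{lm:lemma-exponential}, the bound $\sup_f\|\Gamma_{n,f}\|_\infty\le a_n\sup_f\|f\|_\infty$ drops the factor $\Delta_n^{-1}$ coming from the normalisation $1/(2q_n\Delta_n)$. With it restored, $p_nH/M\asymp p_n\Delta_n/(\sqrt{n}\,a_n)\asymp n^{-1/4}$, and that remainder is no longer small.

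Since the $\xi^{(1)}_{k\Delta_n}$ are martingale increments, a Bernstein/Freedman inequality for martingales avoids blocks. You would combine it with a polynomial truncation and a net argument on the finite-dimensional ball, and Berbee coupling is then needed only for $\Omega$. This route, however, needs something your sketch lacks: a bound on the predictable variation $n^{-2}\sum_k\sigma^4(X_{k\Delta_n})t^2(X_{k\Delta_n})$ that holds uniformly over $t\in B_{m,m'}$, with $\sigma$ unbounded.

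\textbf{Case~2: block length.} Blocks of time-length $\asymp\log^2(n)$ are too short under polynomial mixing. With $\Delta_n=n^{-1/2}$ one has $p_n\asymp\sqrt{n}/\log^2(n)$. The available bound $2p_n\beta_X(q_n\Delta_n)\lesssim\sqrt{n}(\log n)^{-2(k+2)}$ then diverges. You need polynomially long blocks, e.g.\ $q_n\Delta_n=(n\Delta_n)^{5/(r_0-1)}$ as in Lemma~\ref{lm:omega-comp}.

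\textbf{Case~2: the $\Omega^c$ term.} Your bound $|\mathcal{M}_n|\,m\log(n)\,\mathbb{P}(\Omega_m^c)^{1/2}$ is too crude: with $\mathbb{P}(\Omega_m^c)\le Cn^{-2}$ it is not $O(1/n)$. The fix has three parts.
\begin{itemize}
\item By nestedness, $\Omega=\Omega_{m_{\max}}$, so no factor $|\mathcal{M}_n|$ arises.
\item The estimator part is bounded deterministically via $\|\widehat{\mathbf a}\|_2^2\le m_{\max}L$. This gives $\mathrm{poly}(m_{\max})\,L\,\mathbb{P}(\Omega_{m_{\max}}^c)$ with no square root. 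Only the $\sigma^4(X_{k\Delta_n})$ term needs Cauchy--Schwarz.
\item Lemma~\ref{lm:omega-comp} must be re-run at $m_{\max}$ rather than at the rate-optimal $m$. This works because $\mathcal{L}(m_{\max})\|\Psi_{m_{\max}}^{-1}\|_{\mathrm{op}}\le C(n\Delta_n)^{2d_0/(4d_0+1)}$, with exponent below $1/2$ (resp.\ $C(n\Delta_n)^{k_0/(k_0+1)}$ in case~1).
\end{itemize}
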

The above results still hold for the estimation risks defined by the empirical pseudo-norm $\|.\|_n$. The robustness of Talagrand’s inequality leads to penalty terms having the same order as the corresponding estimation errors. Other methods, such as the chaining technique developed in \cite{baraud2001model}, often lead to penalty terms that match the corresponding variance terms up to an additional logarithmic factor (see, \textit{e.g.}, \cite{denis2021ridge}, \cite{ella2024nonparametric}). The constants of the penalty functions are numerically calibrated in the appendix. Then, from Theorem~\ref{thm:adaptation} and the assumptions therein, for exponentially ergodic diffusion processes, we obtain:
$$\mathbb{E}\left[\left\|\w{\sigma}_{\w{m}}^2 - \sigma_I^2\right\|_{\pi}^2\right] = \mathcal{O}\left(\log^4(n)n^{-2s/(2s+k_0)}\right).$$
Similarly, for polynomially ergodic diffusion processes, the adaptive estimator of $\sigma_I^2$ satisfies
$$\mathbb{E}\left[\left\|\w{\sigma}_{\w{m}}^2 - \sigma_I^2\right\|_{\pi}^2\right] = \mathcal{O}\left(n^{-2s/(2s+d_0)}\right).$$
The result of Theorem~\ref{thm:adaptation} is obtained under a stronger regularity assumption on the diffusion coefficient $\sigma$ (see Assumption~\ref{ass:Regular-Sigma}). Relaxing this regularity assumption leads to the following result.
\begin{theo}\label{thm:adaptation2}
    Grant Assumptions~\ref{ass:drift}, \ref{ass:diffusion}, \ref{ass:NonDegeneracy} and \ref{ass:stationary} and suppose that $L = \log(n)$ and $n \rightarrow \infty$. We obtain the following results.
    \begin{enumerate}
        \item Suppose that $n\Delta_n^2 = \varepsilon_0\log^4(n)$ with $\varepsilon_0 \in (0, 10^{-1})$. Under Assumption~\ref{ass:StrongAssDrift}, there exists a constant $C>0$ such that
    \begin{equation*}
        \begin{aligned}
           \mathbb{E}\left[\left\|\w{\sigma}_{\w{m}}^2 - \sigma_I^2\right\|_{\pi}^2\right] \leq &~ 9\underset{m \in \mathcal{M}_n}{\inf}\left\{\underset{f \in \mathcal{S}_{m,L}}{\inf}{\left\|f - \sigma^2\right\|_{\pi}^2} + \mathrm{pen}_1(m)\right\} + C\dfrac{\log^{2\alpha}(n)}{n^{\alpha/2}},
        \end{aligned}
    \end{equation*}
    where $\mathrm{pen}_1: m \mapsto \tau_1 m^{k_0}/n$ is the penalty function with $\tau_1 > 0$ a constant depending on $\sigma$.
    \item Suppose that $n\Delta_n^2 = 1$. Under Assumption~\ref{ass:limit-case-elliptic}, there exists a constant $C>0$ such that
    \begin{equation*}
        \begin{aligned}
           \mathbb{E}\left[\left\|\w{\sigma}_{\w{m}}^2 - \sigma_I^2\right\|_{\pi}^2\right] \leq &~ 9\underset{m \in \mathcal{M}_n}{\inf}\left\{\underset{f \in \mathcal{S}_{m,L}}{\inf}{\left\|f - \sigma^2\right\|_{\pi}^2} + \mathrm{pen}_2(m)\right\} + \dfrac{C}{n^{\alpha/2}},
        \end{aligned}
    \end{equation*}
    where $\mathrm{pen}_2: m \mapsto \tau_2 m^{d_0}/n$ is the penalty function with $\tau_2 > 0$ a constant depending on $\sigma$.
    \end{enumerate}
\end{theo}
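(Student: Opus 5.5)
We follow the model selection argument behind Theorem~\ref{thm:adaptation}, using the regression model with error term $\zeta_{k\Delta_n}$ in \eqref{eq:Error-term2} and residual $\widetilde{R}_{k\Delta_n}$ in \eqref{eq:Residual2}. Both terms only involve continuity of $b,\sigma$ together with Proposition~\ref{prop:Exp-Holder}, so Assumption~\ref{ass:Regular-Sigma} is not needed.

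\emph{Step 1 (basic inequality).} Fix $m\in\mathcal{M}_n$ and $f_m\in\mathcal{S}_{m,L}$. By definition of $\widehat{m}$ and $\widehat{\sigma}_{\widehat m}^2$ we have $\gamma_n(\widehat{\sigma}_{\widehat m}^2)+\mathrm{pen}(\widehat m)\le \gamma_n(f_m)+\mathrm{pen}(m)$. Expanding $\gamma_n$ with $U_{k\Delta_n}=\sigma^2(X_{k\Delta_n})+\zeta_{k\Delta_n}+\widetilde R_{k\Delta_n}$ gives
\begin{equation*}
\|\widehat{\sigma}_{\widehat m}^2-\sigma_I^2\|_n^2\le \|f_m-\sigma_I^2\|_n^2+2\nu_n(\widehat{\sigma}_{\widehat m}^2-f_m)+\frac{2}{n}\sum_{k}\widetilde R_{k\Delta_n}(\widehat{\sigma}_{\widehat m}^2-f_m)(X_{k\Delta_n})+\mathrm{pen}(m)-\mathrm{pen}(\widehat m),
\end{equation*}
where $\nu_n(t)=n^{-1}\sum_k \zeta_{k\Delta_n}t(X_{k\Delta_n})$. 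Since the spaces are nested, $\widehat{\sigma}_{\widehat m}^2-f_m\in\mathcal{S}_{m\vee\widehat m}$.

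\emph{Step 2 (residual term).} We bound the residual term by $\frac18\|\widehat{\sigma}_{\widehat m}^2-f_m\|_n^2+8\|\widetilde R\|_n^2$, where $\|\widetilde R\|_n^2 := n^{-1}\sum_k \widetilde R_{k\Delta_n}^2$. We then control $\mathbb{E}[\widetilde R_{k\Delta_n}^2]$ term by term, using Burkholder--Davis--Gundy, Cauchy--Schwarz, stationarity and Proposition~\ref{prop:Exp-Holder}(ii) with the H\"older exponent $\alpha$:
\begin{itemize}
\item the dominant pieces are $\Delta_n^{-1}(\int(\sigma(X_s)-\sigma(X_{k\Delta_n}))dW_s)^2$ and $\widetilde R^{(3)}$, each of order $\Delta_n^{\alpha}$ in the appropriate moment;
\item the drift pieces are $O(\Delta_n)$ by Proposition~\ref{prop:Exp-Holder}(i) and the moment bounds on $b$ and $\sigma$.
\end{itemize}
Hence $\mathbb{E}\|\widetilde R\|_n^2\le C\Delta_n^{\alpha}$. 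With $n\Delta_n^2=\varepsilon_0\log^4 n$ this is $C\log^{2\alpha}(n)n^{-\alpha/2}$ in case 1, and with $n\Delta_n^2=1$ it is $Cn^{-\alpha/2}$ in case 2. This is exactly the remainder claimed, and it is what replaces the $\log^4(n)/n$ and $1/n$ remainders of Theorem~\ref{thm:adaptation}.

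\emph{Step 3 (Talagrand step).} This step should be almost verbatim the one in Theorem~\ref{thm:adaptation}, and it is the main technical point. We write $2\nu_n(t)\le \frac18\|t\|_\pi^2+8\sup_{t\in B_{m,\widehat m}}\nu_n^2(t)$, where $B_{m,m'}$ is the $\pi$-unit ball of $\mathcal{S}_{m\vee m'}$. Because $\zeta_{k\Delta_n}=\sigma^2(X_{k\Delta_n})\Delta_n^{-1}[(\Delta W_k)^2-\Delta_n]$ is a martingale increment with explicit chi-square conditional law, the analysis has three parts:
\begin{itemize}
\item use Berbee's coupling with the $\beta$-mixing bound (exponential, or polynomial under Assumption~\ref{ass:limit-case-elliptic}) to reduce to independent blocks;
\item truncate $\sigma^2(X)$ at a level controlled through the moments from Proposition~\ref{prop:Exp-Holder};
\item apply Talagrand's inequality with $H^2\asymp \mathcal{L}(m')\|\Psi_{m'}^{-1}\|_{\mathrm{op}}/n$, bounded through Proposition~\ref{prop:operator-norm-identity}.
\end{itemize}
With this choice, $p(m,m')=\tau m'^{k_0}/n$ (respectively $m'^{d_0}/n$) yields $\mathbb{E}[(\sup\nu_n^2-p(m,\widehat m))_+]\le C/n$. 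We choose $\mathrm{pen}(m)\ge 8p(m,m)$ so that $8p(m,\widehat m)\le \mathrm{pen}(m)+\mathrm{pen}(\widehat m)$. The delicate point is the truncation of the unbounded $\sigma^2$ (case 1): the tail must be $O(n^{-1})$, which uses $d>8D$ and moments of order $p<d/D$.

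\emph{Step 4 (conclusion).} On $\Omega:=\Omega_{m_{\max}}$ (with $m_{\max}=\max\mathcal{M}_n$) the norms $\|\cdot\|_n$ and $\|\cdot\|_\pi$ are equivalent up to the factors $1/2,3/2$ on the largest model, and hence on all nested models. Gathering terms there yields the factor $9$ in front of the oracle bias plus penalty. On $\Omega^c$ we use the $\ell^2$ constraint: $\|\widehat{\sigma}_{\widehat m}^2\|_\pi^2\le m L\max_j\|\phi_j\|_\infty^2\lesssim m\log n$, together with Lemma~\ref{lm:omega-comp} applied at $m_{\max}$, which gives a contribution negligible compared with $n^{-\alpha/2}$. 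Taking the infimum over $m$ and $f_m$ concludes.
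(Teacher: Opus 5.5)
Your outline follows the paper's proof. The paper simply reruns the argument of Theorem~\ref{thm:adaptation} with the noise $\zeta_{k\Delta_n}$ and residual $\widetilde R_{k\Delta_n}$ of \eqref{eq:Error-term2}--\eqref{eq:Residual2}:
the residual is bounded by $C\Delta_n^{\alpha}$ as in Theorem~\ref{thm:upper-limit2-NAE};
the noise is handled by noting that $\zeta_{k\Delta_n}$ is $\xi^{(1)}_{k\Delta_n}$ with $\sigma(X_s)$ frozen at $\sigma(X_{k\Delta_n})$, so Lemmas~\ref{lm:lemma-exponential} and \ref{lm:lemma-polynomial} are invoked verbatim;
and $\Omega_{m_{\max}}^c$ is treated with Lemma~\ref{lm:omega-comp}.
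Your Steps 1, 2 and 4 reproduce this. Taking an arbitrary deterministic $f_m\in\mathcal S_{m,L}$ is actually cleaner than the paper's use of $P_m(\sigma^2)$, which need not lie in $\mathcal S_{m,L}$.

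The step that does not work as you describe it is the truncation in Step 3. Truncating $\sigma^2(X_{k\Delta_n})$ alone leaves the factor $(W_{(k+1)\Delta_n}-W_{k\Delta_n})^2/\Delta_n-1$, which is unbounded. The block variables then have no sup-norm bound $M$, so the bounded-class Talagrand inequality cannot be applied. You would have to truncate the whole increment $\Delta_n\zeta_{k\Delta_n}$ and recentre it conditionally, as in \eqref{eq:Z-1-2}, or also cut the Gaussian factor at a $\sqrt{\log n}$ level.

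Even then, the claimed $C/n$ bound is fragile. A block variable is bounded only by $(a_n/\Delta_n)\|f\|_\infty$; the paper's proof of Lemma~\ref{lm:lemma-exponential} writes $a_n\|f\|_\infty$. With $p_n\asymp\sqrt n$ blocks, the second term of Talagrand's bound is then of order $M^2/p_n^2$ rather than $O(1/n)$.

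For this theorem, however, you can drop coupling, truncation and Talagrand entirely. Note that $\{f\in\mathcal S_{m\vee\widehat m}:\|f\|_\pi=1\}\subset B_{\max}:=\{f\in\mathcal S_{m_{\max}}:\|f\|_\pi=1\}$. The $\zeta_{k\Delta_n}$ are martingale increments with conditional variance $2\sigma^4(X_{k\Delta_n})$. So the computation \eqref{eq:term00}--\eqref{eq:kappa}, combined with \eqref{eq:Gram1}--\eqref{eq:Gram2-2}, gives
\[
\mathbb{E}\Big[\sup_{f\in B_{\max}}\nu_n^2(f)\Big]\le C\,\frac{m_{\max}\|\Psi_{m_{\max}}^{-1}\|_{\mathrm{op}}}{n}\le C\,\frac{m_{\max}^{k_0}}{n}\le C\,\frac{(n\Delta_n)^{k_0/(k_0+1)}}{n}\le C\Delta_n ,
\]
because $2s_0+1=2k_0+2$. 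In case 2 the same quantity is at most $Cm_{\max}^{d_0}/n\le Cn^{-3/4}$, since $(2d+3)/(4d+7)<1/2$ and $n\Delta_n=\sqrt n$. Both bounds are below the remainder $\Delta_n^{\alpha}$ you already pay in Step 2. Hence $2\nu_n(t)\le\tfrac18\|t\|_\pi^2+8\sup_{B_{\max}}\nu_n^2$, together with $-\mathrm{pen}(\widehat m)\le 0$, closes the argument.

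Two smaller points.
First, your $\tfrac18$-splits do not yield the factor $9$; they give a larger absolute constant. The paper's $9$ itself comes from dropping the factor $2$ in $\|\cdot\|_\pi^2\le 2\|\cdot\|_n^2$ on $\Omega_m$.
Second, on $\Omega^c$ the crude bound is $\|\widehat\sigma^2_{\widehat m}\|_\pi^2\le \widehat m L\,\mathcal L(\widehat m)\lesssim m_{\max}^2\log n$ rather than $m\log n$, which is still negligible.
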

The proofs of these results follow from the arguments used in the proofs of Theorems~\ref{thm:upper-limit2-NAE}. The penalty terms remain the same as that provided by Theorem~\ref{thm:adaptation}, the only contrast coming from the control of the error term $\widetilde{R}_{k\Delta_n}$ given in Equation~\eqref{eq:Residual2}. The next section provides a numerical study based on simulated data.

\section{Numerical study}
\label{sec:Numerical-study}

We investigate the numerical performance of estimators of $\sigma^2$ on the interval $I = (l,+\infty)$, where $l \in \{-\infty,0\}$. The numerical study is performed in \texttt{R}, using the \texttt{orthopolynom} package for the implementation of the Hermite and Laguerre bases. We also use the \texttt{R}-function \texttt{uniroot} or \texttt{optimize} for the selection of $\widehat{\lambda}$ given in Equation~\eqref{eq:Coordinate-b}. 

\subsection{Models and simulation} 

We consider the following models:
\begin{itemize}
    \item Model 1: $b(x) = -d\log(2+x^2)x, ~~ \sigma(x) = \sqrt{(1+x^2)\log(2+x^2)}$, ~~~ $d=10$,
    \item Model 2: $b(x) = -d\left(3+\sqrt{1+x^2}\right)^2x/9\left(1+x^2\right)^2, ~~ \sigma(x) = 1/3+1/\sqrt{1+x^2}$, ~~~ $d=9/2$,
    \item Model 3: $b(x) = 1/x - dx/4, ~~ \sigma(x) = \sqrt{x} + x/2$, ~~~ $d \in \{10, 100\}$,
    \item Model 4: $b(x) = -d(2+\cos x)x/(1+x^2), ~~ \sigma(x) = \sqrt{2+\cos x}$, ~~~ $d=9/2$,
    \item Model 5: $b(x) = 1/x - dx^3, ~~ \sigma(x) = \sqrt{x} + x^2$, ~~~ $d \in \{10, 100\}$.
\end{itemize}
Model 1 satisfies Assumptions~\ref{ass:drift}, \ref{ass:diffusion} with $D=2$, Assumptions~\ref{ass:NonDegeneracy} and \ref{ass:StrongAssDrift}. The process takes values in $\mathbb{R}$ (i.e., $l = -\infty$), is exponentially ergodic and admits a unique invariant distribution. Models 2 and 4 satisfy Assumptions~\ref{ass:drift}, \ref{ass:diffusion}, Assumption~\ref{ass:NonDegeneracy} and \ref{ass:limit-case-elliptic}, and their respective processes are polynomially ergodic and take values in $\mathbb{R}$. Finally, for Models 3 and 5, Assumptions~\ref{ass:diffusion}, \ref{ass:NonDegeneracy} and \ref{ass:StrongAssDrift} are satisfied. Assumption~\ref{ass:drift} is satisfied on the intervals $[a, +\infty)$ with $a>0$ for Model 3, and on any compact interval of $(0, +\infty)$ for Model 5, and the scale function satisfies $S(0) = -\infty$ in both cases. The unique strong solution $X$ with $X_0>0$, is positive and never hits $0$, being non-explosive. We choose $X_0 = 1$ for the numerical experiment. 

For each of the five diffusion models, the data points $X_{k\Delta_n}, ~ k=1, \ldots, n$ are simulated using the Euler scheme, that is,
\begin{equation*}
    X_{k\Delta_n} = X_{(k-1)\Delta_n} + \Delta_nb(X_{(k-1)\Delta_n}) + \sigma(X_{(k-1)\Delta_n})(W_{k\Delta_n} - W_{(k-1)\Delta_n}), ~~ k=1, \ldots, n.
\end{equation*}
The sample paths $(X_{k\Delta_n})_{k = 0, \ldots, n}$ are generated for $n \in \mathcal{N} = \{100000, 1000000\}$. For exponentially ergodic processes, the time step $\Delta_n$ is chosen so that $\Delta_n = \sqrt{\varepsilon_0}\log^2(n)/\sqrt{n}$ with $\varepsilon_0 = 10^{-4}$. We choose $\varepsilon_0$ sufficiently small to compensate for the growth of $\log^2(n)$ so that $\Delta_n$ is close enough to $0$. For polynomially ergodic processes, $\Delta_n = 1/\sqrt{n}$. The Hermite basis is used for Models 1, 2, and 4, and the Laguerre basis for Models 3 and 5. Finally, we set $\mathcal{M} = \{1, \ldots, 10\}$ the set of possible values of the dimension of approximation spaces. 

\subsection{Numerical results} 

The penalty functions $m \in \mathcal{M}_n \mapsto \mathrm{pen}_1(m) = \tau_1 m^{k_0}/n$ and $m \in \mathcal{M}_n \mapsto \mathrm{pen}_2(m) = \tau_2 m/n$ respectively for exponentially ergodic processes and polynomially ergodic processes contain constants $\tau_1>0$ and $\tau_2>0$ that depend on the diffusion coefficients. These constants are numerically calibrated in the appendix, and we obtain $\widehat{\tau}_1 = 10^{-4}$ and $\widehat{\tau}_2 = 100$. Then, the numerical assessment of the risks of projection estimators of $\sigma_I^2$ is performed by repeating $100$ times the following steps:
\begin{enumerate}
    \item [(i)] Simulate $X^{\mathrm{train}} = (X_0, \ldots, X_{n\Delta_n})$ and $X^{\mathrm{test}} = (X_0, \ldots, X_{q\Delta_{q}})$ with $n \in \mathcal{N}$ and $q= 10000$.
    \item [(ii)] For each $m \in \mathcal{M}$, compute $\widehat{\sigma}_m^2$ from the diffusion path $X^{\mathrm{train}}$ using Equations~\eqref{eq:Coordinate-a} and \eqref{eq:Coordinate-b}.
    \item [(iii)] Select the optimal dimension $\widehat{m}$ in the set $\mathcal{M}$ using Equation~\eqref{eq:select-dimension}.
    \item [(iv)] From the path $X^{\mathrm{test}}$, compute the empirical estimation risk $\|\widehat{\sigma}_{\widehat{m}}^2 - \sigma_I^2\|_n^2$.
\end{enumerate}
The risks of estimation are deduced from the average value of $\|\widehat{\sigma}_{\widehat{m}}^2 - \sigma_I^2\|_n^2$ over the $100$ repetitions. 

\begin{table}
\centering
\begin{tabular}{lccc}
\hline
 & $n = 100 000$ & $n = 1000 000$ \\
\hline
Model 1 & $0.0014 ~~ (0.00057)$ & $0.00098 ~~ (0.00096)$ \\
Model 2 & $0.0016 ~~ (0.00051)$ & $0.00008 ~~ (0.00004)$ \\
Model 3, ~ $d=100$ & $0.017 ~~ (0.00086)$ & $0.016 ~~ (0.0024)$ \\
Model 4 & $0.0053 ~~ (0.0037)$ & $0.00043 ~~ (0.00022)$ \\
Model 5, ~ $d=100$ & $0.033 ~~ (0.0014)$ & $0.031 ~~ (0.0019)$ \\
\hline
\end{tabular}
\caption{Numerical assessment of the risks of estimation of adaptive estimators of $\sigma_I^2$ on $I = (l, +\infty)$ for each of the five models.}
\label{tab:estimation_risk}
\end{table}

The numerical results are reported in Table~\ref{tab:estimation_risk}. We have average risks of estimation for each model, with the standard derivation in parentheses. From Table~\ref{tab:estimation_risk}, we note that Models 3 and 5 with the respective unbounded diffusion coefficients $\sigma(x) = \sqrt{x} + x/2$ and $\sigma(x) = \sqrt{x} + x^2$ are more complex compared to other models. The corresponding theoretical rates are of order $\log(n)n^{-1/8}$ provided by Theorem~\ref{thm:upper-limit2-NAE} since Assumption~\ref{ass:Regular-Sigma} is not satisfied in these two cases and $\alpha = 1/2$. The diffusive regime seems to be dominant with respect to the dissipative effect induced by the drift coefficient. For Model 1, with the non-Lipschitz diffusion coefficient $\sigma(x) = \sqrt{(1+x^2)\log(2+x^2)}$ satisfying Assumption~\ref{ass:Regular-Sigma}, which implies a faster theoretical rate compared to Models 3 and 5, resulting in an improved numerical performance. The same analysis is performed for Models 2 and 4, where Model 4 has a continuous sinusoidal diffusion coefficient satisfying Assumption~\ref{ass:Regular-Sigma} with $\theta = 0$, and Model 2 has a Lipschitz diffusion coefficient that also satisfies Assumption~\ref{ass:Regular-Sigma} with $\theta=0$. 

\begin{table}
\centering
\begin{tabular}{lcccc}
\hline
 & $\varepsilon_0 = 0.01$ & $\varepsilon_0 = 0.001$ & $\varepsilon_0 = 0.0001$ & $\varepsilon_0 = 0.0001$ \\
\hline
Model 1 & $0.10 ~~ (0.0080)$ & $0.0081 ~~ (0.0012)$ & $0.0013 ~~ (0.0013)$ & $0.0011 ~~ (0.0011)$ \\
Model 3 & xxx ~~ (xxx) & xxx ~~ (xxx) & $0.017 ~~ (0.00089)$ & $0.017 ~~ (0.0029)$ \\
Model 5 & xxx ~~ (xxx) & xxx ~~ (xxx) & $0.033 ~~ (0.0012)$ & $0.031 ~~ (0.0026)$ \\
\hline
\end{tabular}
\caption{Impact of $\varepsilon_0$ on the quality of estimation of $\sigma_I^2$ on $I = (l, +\infty)$ with $n=100000$. }
\label{tab:varepsilon_impact}
\end{table}

For Models 1, 3, and 5, we chose $\varepsilon_0 = 10^{-4}$, and we can see from Table~\ref{tab:varepsilon_impact} that the value of this numerical parameter is highly sensible as it can dramatically affect the performance of the estimator. The results for Models 3 and 5 confirm the idea that $\varepsilon_0$ should be close enough to $0$ since choosing $\varepsilon \in \{0.01, 0.001\}$ leads to infinite values in the empirical matrix $\widehat{\Phi}_m$.  

\begin{table}
\centering
\begin{tabular}{lccccc}
\hline
 & $d=1$ & $d=5$ & $d=10$ & $d=50$ & d=100 \\
\hline
Model 1 & $r_1 ~~ (s_1)$ & $0.0030 ~~ (0.0054)$ & $0.0013 ~~ (0.00059)$ & $0.0043 ~~ (0.00039)$ & $0.015 ~~ (0.00086)$ \\
Model 2 & $0.06 ~~ (0.23)$ & $0.0014 ~~ (0.0013)$ & $0.0018 ~~ (0.00055)$ & $0.19 ~~ (0.015)$ & $1.20 ~~ (0.046)$ \\
Model 3 & $r_3 ~~ (s_3)$ & $11.19 ~~ (19.80)$ & $0.85 ~~ (1.34)$ & $0.060 ~~ (0.0059)$ & $0.016 ~~ (0.00083)$ \\
Model 4 & $r_4 ~~ (s_4)$ & $0.0053 ~~ (0.0021)$ & $0.033 ~~ (0.0064)$ & $2.095 ~~ (0.096)$ & $17.96 ~~ (0.55)$ \\
Model 5 & xxx ~~ (xxx) & xxx ~~ (xxx) & $0.13 ~~ (0.11)$ & $0.082 ~~ (0.0039)$ & $0.033 ~~ (0.0012)$ \\
\hline
\end{tabular}
\caption{Impact of values of $d$ on the numerical performance of adaptive estimators of $\sigma_I^2$ on $I = (l, +\infty)$ with $n=100000$, where $r_1 = 781.69$, $s_1 = 7295.84$, $r_3 \simeq 1.4 \times 10^4$, $s_3 \simeq 4.4 \times 10^4$, $r_4 \simeq 2.3 \times 10^6$ and $s_4 \simeq 2.3 \times 10^7$.}
\label{tab:d_impact}
\end{table}

Note, however, that the performance of estimators of $\sigma_I^2$ is influenced not only by the nature of the diffusion coefficient, but also by the parameter $d$, which quantifies the balance or imbalance between the dissipative effect and the diffusive behavior of the process. From the numerical results reported in Table~\ref{tab:d_impact}, when $d=1$, which leads to a weak dissipative effect, the estimation risk explodes for Models 1, 3 and 4 whose respective diffusion coefficients are either unbounded (Models 1 and 3), or present a particularly disturbing behavior (Model 4 with sinusoidal $\sigma$ taking values in $[1,\sqrt{3}]$). The case of Model 2 is an example of a stable regime characterized by a weak dissipative effect coupled with a weakly diffusive regime ($\sigma(x) \in [1/3, 4/3]$). A theoretical proof of our main results under this condition goes beyond the scope of the present paper as the parameter $d$ is required to be large enough for the definition of moments of an order sufficiently high (See proofs of Theorems~\ref{thm:upper-limit-NAE}, \ref{thm:upper-limit2-NAE} and \ref{thm:adaptation}). For $d \in \{5,10\}$ the regime is generally stable except for Models 3 and 5, which require a stronger dissipative effect with larger $d$ ($d \in \{50, 100\}$), unless the deterioration of the estimation risks is due to the presence of outliers. Finally, Figure~\ref{fig:courbes} displays the projection estimators of $\sigma_I^2$ for each of the five models. In each case, we observe a good quality of estimation in compact intervals. This phenomenon is the result of the dissipative effect induced by the drift function under Assumption~\ref{ass:StrongAssDrift} or Assumption~\ref{ass:limit-case-elliptic}. In fact, we observe that for Model 1, more than $99\%$ of data points lie within the compact interval $[-1, 1]$, which explains the poor quality of estimation outside $[-1, 1]$. A similar observation is made for Models 2 with an average of $95\%$ of data points within $[-1,1]$, Models 3 (more than $98\%$ within $(0,2]$), Model 4 (more than $96\%$ of data points within $[-1, 1]$) and Models 5 (more than $98\%$ within $(0,2]$).For Models 3 and 5, the estimators are built with d=10 for simplicity, since choosing d=100 leads to the appearance of outliers that must be removed before representing the estimators. However, these outliers were not removed for the computation of the estimation risks reported in Tables~\ref{tab:estimation_risk}, \ref{tab:varepsilon_impact} and \ref{tab:d_impact}. 
 \begin{figure}
\centering
\includegraphics[width=0.7\textwidth]{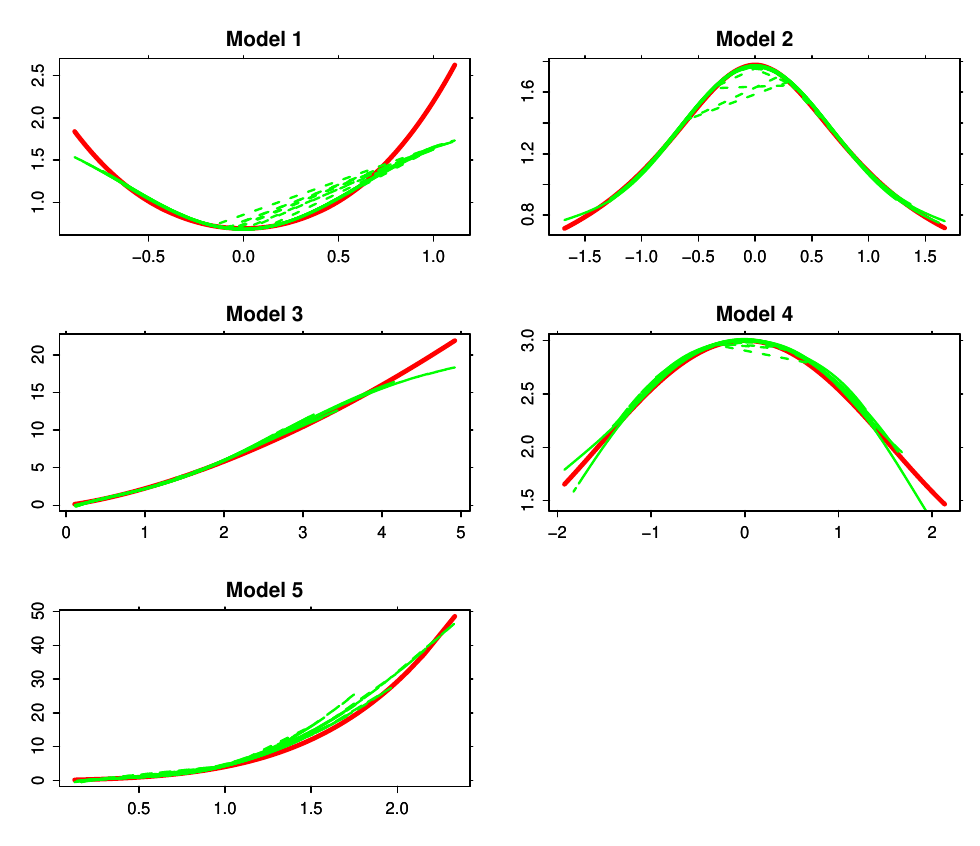}
\caption{Curves of the diffusion coefficients (in red) and a set of $10$ estimators (in green) built from a sample path of size $n+1$ with $n=1000000$.}
\label{fig:courbes}
\end{figure}

\section{Proofs}
\label{sec:proofs}

In this section, Assumptions~\ref{ass:drift}, \ref{ass:diffusion}, \ref{ass:NonDegeneracy}, \ref{ass:stationary} are always satisfied. As we consider both exponential ergodicity and polynomial ergodicity, we will just mention, in most cases,  Assumption~\ref{ass:StrongAssDrift} for the exponential ergodicity and Assumption~\ref{ass:limit-case-elliptic} for the polynomial one. In addition, constants are generally denoted by $C,c>0$ and their values may change from line to line.

\subsection{Proof of Proposition~\ref{prop:Exp-Holder}}

\begin{proof}
Throughout the proof, we set \( l = -\infty \), so that \( (l,+\infty) = \mathbb{R} \).  The results obtained extend readily to any interval \( (l,+\infty) \) with \( l \in \mathbb{R} \).
Dealing with stochastic differential equations with unbounded diffusion coefficients requires one to prove that Lemma 2 in \cite{denis2024nonparametric} and Lemma 2.2 in \cite{ella2024nonparametric} still hold. More precisely, for all $p \in (0, d/D]$ and for all $h \in (0,1/2)$,
\begin{equation}\label{eq:prop1-0}
    \begin{aligned}
        \forall~T>0, ~~ \mathbb{E}\left[\underset{t \in [T,T + h]}{\sup}{|X_t|^{2p}}\right] < \infty ~~ \mathrm{and} ~~ \exists~C_p>0, ~ \forall~t\geq 0: ~ \mathbb{E}\left[\left|X_{t+h} - X_t\right|^{2p}\right] \leq C_ph^{p}.
    \end{aligned} 
\end{equation}
Under Assumption~\ref{ass:drift}, since $\kappa: (0, \infty) \rightarrow (0, \infty)$ is a positive increasing concave function, for all $x>0, ~ \kappa(x) \leq \kappa(1)x$. Consequently, there exists a constant $C>0$ such that for all $x \in \mathbb{R}, ~ |b(x)| \leq C(1 + \kappa(|x|)) \leq C(1 + \kappa(1)|x|)$. Under Assumptions~\ref{ass:drift} and \ref{ass:diffusion} and from \cite{ella2024nonparametric}, combining the H\"older inequality with the Doob inequality and the Burkholder-Davis-Goundy inequality, for all $p \in (0, d/D)$, with $d > 8D$, and for all $(T, h) \in (0, \infty) \times (0,1/2)$, there exists $C_{p} > 0$ depending on $p$ such that
\begin{equation}\label{eq:prop1-1}
    \begin{aligned}
        \mathbb{E}\left[\underset{t \in [T,T+h]}{\sup}{|X_t|^{2p}}\right] \leq &~ C_{p}\left(\mathbb{E}\left[|X_0|^{2p}\right] + h^{2p-1}\int_{T}^{T+h}(1 + \mathbb{E}\left[|X_s|^{2p}\right])ds + \mathbb{E}\left[\left(\int_{T}^{T+h}\sigma^{2}(X_s)ds\right)^p\right]\right)\\
        \leq &~ C_{p}\left(\mathbb{E}\left[|X_0|^{2p}\right] + h^{2p-1}\int_{T}^{T+h}(1 + \mathbb{E}\left[|X_s|^{2p}\right])ds + h^{p-1}\int_{T}^{T+h}\mathbb{E}\left[\sigma^{2p}(X_s)\right]ds\right)\\
        \leq &~ C_{p}\left(\mathbb{E}\left[|X_0|^{2p}\right] + \int_{T}^{T+h}(1 + \mathbb{E}\left[|X_s|^{2p}\right])ds + \int_{T}^{T+h}(1 + \mathbb{E}\left[\rho^{2p}(|X_s|)\right])ds\right),
    \end{aligned}
\end{equation}
since $h < 1$. Finally, under Assumption~\ref{ass:NonDegeneracy}, $\dfrac{xb(x)}{\sigma^2(x)}\longrightarrow -d ~~ \mathrm{as} ~~ |x| \rightarrow \infty,$
with $d > 8D$, there exists $A \geq R > 0$ such that for all $|x| \geq A, ~ \left||xb(x)/\sigma^2(x)| - d\right| \leq 1/2$, and for all $x \notin (-A,A), ~ \sigma^2(x) \geq c|x|^{q+1}$ where $c>0$ is a constant, with $q \geq 1$ and $R>0$ given in Assumption~\ref{ass:StrongAssDrift}. Then, under Assumption~\ref{ass:StrongAssDrift}, for all $s \geq 0$ and for all $p \in (0, d/D)$,
\begin{equation}\label{eq:prop1-2}
    \begin{aligned}
        \mathbb{E}\left[|X_s|^{2p}\right] = &~ \int_{(-\infty, -A)}|x|^{2p}\pi_X(dx) + \int_{[-A, A]}|x|^{2p}\pi_X(dx) + \int_{(A, +\infty)}|x|^{2p}\pi_X(dx).
    \end{aligned}
\end{equation}
Then, by Equation~\eqref{eq:InvariantDensity} and for all $p \in (0, d/D)$
\begin{equation}\label{eq:prop1-3}
    \begin{aligned}
        \int_{(A, +\infty)}|x|^{2p}\pi_X(dx) = &~ \int_{(A, +\infty)}\dfrac{2x^{2p}dx}{M\sigma^2(x)}\exp\left(2\int_{0}^{x}\dfrac{b(z)}{\sigma^2(z)}dz\right)\\
        \leq &~ \int_{(A, +\infty)}\dfrac{2C_Ax^{2p}dx}{M\sigma^2(x)}\exp\left(-2\int_{A}^{x}\dfrac{1}{z}.\left|\dfrac{zb(z)}{\sigma^2(z)}\right|dz\right)\\
        \leq &~ \int_{(A, +\infty)}\dfrac{2C_Ax^{2p}dx}{M\sigma^2(x)}\exp\left((-2d+1)\int_{A}^{x}\dfrac{dz}{z}\right)\\
        \leq &~ \int_{(A, +\infty)}\dfrac{2C_AA^{2d-1}dx}{Mx^{2d-2p+q}} < \infty,
    \end{aligned}
\end{equation}
where $C_A = \exp\left(2\int_{[0,A]}|b(u)|/\sigma^2(u)du\right) < \infty$ and $2d - 2p + q > 1$. On the other hand, for all $p \in (0, d/D)$,
\begin{equation}\label{eq:prop1-4}
    \begin{aligned}
        \int_{(-\infty, -A)}|x|^{2p}\pi_X(dx) = &~ \int_{(A, +\infty)}\dfrac{2x^{2p}dx}{M\sigma^2(-x)}\exp\left(-2\int_{0}^{x}\dfrac{b(-z)}{\sigma^2(-z)}dz\right)\\
        \leq &~ \int_{(A, +\infty)}\dfrac{2\widetilde{C}_Ax^{2p}dx}{M\sigma^2(-x)}\exp\left(-2\int_{A}^{x}\dfrac{1}{z}.\left|\dfrac{zb(-z)}{\sigma^2(-z)}\right|dz\right)\\
        \leq &~ \int_{(A, +\infty)}\dfrac{2\widetilde{C}_Ax^{2p}dx}{M\sigma^2(-x)}\exp\left((-2d+1)\int_{A}^{x}\dfrac{dz}{z}\right)\\
        \leq &~ \int_{(A, +\infty)}\dfrac{2\widetilde{C}_AA^{2d-1}dx}{Mx^{2d-2p+q}} < \infty,
    \end{aligned}
\end{equation}
where $\widetilde{C}_A = \exp\left(2\int_{[0,A]}|b(-u)|/\sigma^2(-u)du\right) < \infty$. We deduce from Equations~\eqref{eq:prop1-4}, \eqref{eq:prop1-3} and \eqref{eq:prop1-2},
\begin{equation}\label{eq:prop1-5}
    \forall~s\geq 0, ~ \forall~p \in (0, d/D), ~~ \mathbb{E}\left[|X_s|^{2p}\right] < \infty.
\end{equation}
Moreover, under Assumption~\ref{ass:diffusion}, there exists $B > 0$ such that for all $p \in (0, d/D)$ and for all $s \geq 0$,
\begin{equation*}
    \begin{aligned}
        \mathbb{E}\left[\rho^{2p}(|X_s|)\right] \leq &~ \mathbb{E}\left[\rho^{2p}(|X_s|)\mathds{1}_{|X_s|\leq B}\right] + C\mathbb{E}\left[|X_s|^{2pD}\right]\\
        \leq &~ \rho^{2p}(B) + \int_{[-A, A]}|x|^{2pD}\pi_X(dx) + \int_{(A, +\infty)}\dfrac{2C_AA^{2d-1}dx}{Mx^{2d-2pD+q}} + \int_{(A, +\infty)}\dfrac{2\widetilde{C}_AA^{2d-1}dx}{Mx^{2d-2pD+q}}.
    \end{aligned}
\end{equation*}
Since $q \geq 1$, for all $p \in (0, d/D)$, we have $2d-2pD+q > 1$ and obtain the following: 
\begin{equation}\label{eq:prop1-6}
    \forall~p \in (0, d/D), ~ \forall~s \geq 0, ~~ \mathbb{E}\left[\rho^{2p}(|X_s|)\right] < \infty.
\end{equation}
Thus, from Equations~\eqref{eq:prop1-6}, \eqref{eq:prop1-5} and \eqref{eq:prop1-1}, we deduce that for all $p \in (0, d/D)$, 
\begin{equation}\label{eq:prop1-7}
    \forall~T>0, ~ \forall~h \in (0,1/2), ~~\mathbb{E}\left[\underset{t \in [T,T+h]}{\sup}{|X_t|^{2p}}\right] < \infty.
\end{equation}
Focusing on the second inequality in Equation~\eqref{eq:prop1-0}, from \cite{denis2024nonparametric}, \textit{Proof of Lemma 2} and Equation~\eqref{eq:prop1-7}, for all $p \in (0, d/D)$ and for all $t\geq 0$ and $h \in (0,1/2)$, there exists $C_p > 0$ such that
\begin{equation}\label{eq:prop1-elliptic}
    \begin{aligned}
       \mathbb{E}\left[\left|X_{t+h} - X_t\right|^{2p}\right] \leq &~ C_p\left(h^{2p} + \mathbb{E}\left[\left(\int_{t}^{t+h}\sigma^2(X_s)ds\right)^p\right]\right).   
    \end{aligned}
\end{equation}
Using the H\"older inequality and under Assumption~\ref{ass:diffusion} and Equation~\eqref{eq:prop1-6}, we obtain
\begin{equation}\label{eq:prop1-8}
    \begin{aligned}
        \mathbb{E}\left[\left|X_{t+h} - X_t\right|^{2p}\right] \leq &~ C_p\left(h^{2p} + h^{p-1}\int_{t}^{t+h}\mathbb{E}\left[\rho^{2p}(|X_s|)\right]ds\right) \leq C_ph^p.
    \end{aligned}
\end{equation}
Focusing on polynomially ergodic processes, it suffices to note that under Assumption~\ref{ass:limit-case-elliptic}, from the first line of Equation~\ref{eq:prop1-1} and from Equations~\eqref{eq:prop1-3} and \eqref{eq:prop1-4}, we obtain for all $p \in (0, d-1)$ and $s \geq 0$,
$$ \mathbb{E}\left[|X_s|^{2p}\right] \leq \int_{[-A, A]}|x|^{2p}\pi_X(dx) + \int_{(A, +\infty)}\dfrac{2(C_A+\widetilde{C}_A)A^{2d-1}dx}{M\sigma_{+}^2x^{2d-2p-1}} < \infty.$$
Then, Equation~\eqref{eq:prop1-7} holds for all $p \in (0, d-1)$, with $d > 4$. In addition, from Equation~\eqref{eq:prop1-elliptic} and Assumption~\ref{ass:limit-case-elliptic}, we have for all $p \in (0, d-1)$,
$$ \forall~(t,h) \in (0, \infty) \times (0, 1/2), ~~ \mathbb{E}\left[\left|X_{t+h} - X_t\right|^{2p}\right] = \mathrm{O}(h^p),$$
We can now focus on the main results and we start with the drift coefficient. Under Assumptions~\ref{ass:drift} and \ref{ass:diffusion}, we deduce that for all $t \geq 0$ and for all $h \in (0,1/2)$,
 \begin{equation*}
     \begin{aligned}
         \mathbb{E}\left[\left|b(X_{t+h}) - b(X_t)\right|^{2p}\right] \leq &~ \mathbb{E}\left[\kappa^{2p}(|X_{t+h} - X_t|)\right] \leq \kappa^{2p}(1)\mathbb{E}\left[\left|X_{t+h} - X_t\right|^{2p}\right] \leq C_p\kappa^{2p}(1)h^p,
     \end{aligned}
 \end{equation*}
 where $p \in (0, d/D)$ with $d > 8D$ under Assumption~\ref{ass:StrongAssDrift}, and $p \in (0, d-1)$ with $d>4$ under Assumption~\ref{ass:limit-case-elliptic}.
 
 Focusing on the diffusion coefficient, from Equation~\eqref{eq:prop1-8} with $p=2$ and the Kolmogorov-\v{C}entsov theorem (see \cite{karatzas2014brownian}, \textit{Chapter 2, Theorem 2.8, p.53}), the diffusion process $(X_t)_{t \geq 0}$ admits a modification that is almost surely continuous. We deduce that for all $\eta \in (0,1/2)$ there exists $h_0 \in (0,1/2)$ such that for all $t \geq 0$ and for all $h \leq h_0, ~ |X_{t+h} - X_t| \leq \eta$ a.s. Then, under Assumption~\ref{ass:diffusion}, there exist $C>0$ and $h_0 \in (0, 1/2)$ such that for all $h \in (0, h_0)$,
$$ \rho(|X_{t+h} - X_t|) \leq C|X_{t+h} - X_t|^{\alpha} ~~ a.s., ~~ \alpha \in [1/2, 1].$$ 
 It follows that under Assumptions~\ref{ass:drift}, \ref{ass:diffusion} and from Equation~\eqref{eq:prop1-8}, for all $p \in (0, d/D)$ and for all $h \in (0, h_0)$,
 \begin{equation*}
     \begin{aligned}
         \mathbb{E}\left[\left|\sigma(X_{t+h}) - \sigma(X_t)\right|^{2p}\right] \leq &~ \mathbb{E}\left[\rho^{2p}(|X_{t+h} - X_t|)\right] \leq C^{2p}\mathbb{E}\left[|X_{t+h} - X_t|^{2\alpha p}\right] \leq C_ph^{\alpha p},
     \end{aligned}
 \end{equation*}
 where $C_p>0$ is a new constant. A similar reasoning leads to the same result for polynomially ergodic processes with $p \in (0, d-1)$ and $d > 4$, which concludes the proof. 
\end{proof}

\subsection{Proof of Proposition~\ref{prop:operator-norm-identity}}

\begin{proof}
For $I \in \{\mathbb{R}, [0, +\infty)\}$ and for all $u \in \mathbb{R}^m$ such that $\|u\|_{2,m} = 1$, we have the following:
    \begin{equation*}
        \begin{aligned}
            u^{\prime}\Psi_m u = &~ \sum_{i = 0}^{m-1}\sum_{j=0}^{m-1}u_iu_j\int_{I}\phi_i(x)\phi_j(x)\pi_X(dx) =  \int_{I}\left(\sum_{j=0}^{m-1}u_j\phi_j(x)\right)^2\pi_X(dx).
        \end{aligned}
    \end{equation*}
    
\subsubsection*{Case of the Hermite basis}

Focusing on the Hermite basis, the estimation interval is $I = \mathbb{R}$ and for all $j \in [\![0,m-1]\!], ~ \phi_j = h_j$. Moreover, from \cite{thangavelu1993lectures}, \textit{Chapter 1, Lemma 1.5.1, page 26} (see also \cite{askey1965mean}), there exist constants $c_0, \gamma > 0$ such that for all $x \notin [-b_m, b_m]$ with $b_m=\sqrt{2(2m + 1)}$, $h_j(x) \leq c_0\exp(-\gamma x^2)$. Then, for all $u \in \mathbb{R}^{m}$ such that $\|u\|_{2,m} = 1$,
\begin{equation*}
    \begin{aligned}
        u^{\prime}\Psi_m u \geq &~ \int_{|x| \leq b_m}\left(\sum_{j=0}^{m-1}u_j\phi_j(x)\right)^2\pi_X(dx) \geq \underset{|y| \leq b_m}{\inf}{\pi_X(y)}\int_{|x| \leq b_m}\left(\sum_{j=0}^{m-1}u_jh_j(x)\right)^2dx\\
        = &~ \underset{|y| \leq b_m}{\inf}{\pi_X(y)}\left[\int_{\mathbb{R}}\left(\sum_{j=0}^{m-1}u_jh_j(x)\right)^2dx - \int_{|x| > b_m}\left(\sum_{j=0}^{m - 1}u_jh_j(x)\right)^2dx\right]\\
        \geq &~ \underset{|y| \leq b_m}{\inf}{\pi_X(y)}\left[1 - \sum_{j=0}^{m-1}\int_{|x| > b_m}h_j^2(x)dx\right] \geq \underset{|y| \leq b_m}{\inf}{\pi_X(y)}\left[1 - c_0^2m\int_{b_m}^{+\infty}\exp(-2\gamma x^2)dx\right].
    \end{aligned}
\end{equation*}
Since $\int_{b_m}^{+\infty}\exp(-2\gamma x^2)dx \leq \dfrac{1}{4\gamma b_m}\exp\left(-2\gamma b_m^2\right)$, for $b_m = \sqrt{2(2m+1)}$ for $m$ large enough, we obtain
\begin{equation}\label{eq:prop2-1}
    \begin{aligned}
         u^{\prime}\Psi_m u \geq &~ \dfrac{1}{2}\underset{|y| \leq \sqrt{2(2m + 1)}}{\inf}{\pi_X(y)} > 0,
    \end{aligned}
\end{equation}
and the Gram matrix $\Psi_m$ is invertible. In addition, we have, on the one hand,
\begin{equation}\label{eq:prop2-2}
    \mathcal{L}(m) = \underset{x \in \mathbb{R}}{\sup}{\sum_{j=0}^{m-1}}h_j^2(x) \leq \underset{j \in [\![0,m-1]\!]}{\max}{\left\|h_j\right\|_{\infty}^2} m.
\end{equation}
On the other hand, from Assumption~\ref{ass:NonDegeneracy} and Equation~\eqref{eq:InvariantDensity}, there exists $A > 1$ such that $|x| \geq A \Rightarrow \left||xb(x)|/\sigma^2(x) - d\right| \leq 1/2$ and we obtain for all $x \in (A, +\infty)$,
\begin{equation*}
    \begin{aligned}
       \pi_X(x) = &~ \dfrac{2C_A}{M\sigma^2(x)}\exp\left(-2\int_{A}^{x}\dfrac{1}{z}\dfrac{|zb(z)|}{\sigma^2(z)}dz\right) \geq \dfrac{2C_A}{M\sigma^2(x)}\exp\left((-2d-1)\int_{A}^{x}\dfrac{dz}{z}\right) = \dfrac{2C_A}{M\sigma^2(x)}\left(\dfrac{A}{x}\right)^{2d+1},
    \end{aligned}
\end{equation*}
and for all $x \in (-\infty, -A)$,
\begin{equation*}
    \begin{aligned}
        \pi_X(x) = &~ \dfrac{2\widetilde{C}_A}{M\sigma^2(x)}\exp\left(-2\int_{A}^{-x}\dfrac{1}{z}.\left|\dfrac{zb(-z)}{\sigma^2(-z)}\right|dz\right)\\
        \geq &~ \dfrac{2\widetilde{C}_A}{M\sigma^2(x)}\exp\left(-(2d+1)\int_{A}^{-x}\dfrac{dz}{z}\right) = \dfrac{2\widetilde{C}_A}{M\sigma^2(x)}\left(\dfrac{A}{-x}\right)^{2d+1},
    \end{aligned}
\end{equation*}
where 
$$C_A = \exp\left(2\int_{[0,A]}|b(u)|/\sigma^2(u)du\right) \in (0, \infty) ~~ \mathrm{and} ~~ \widetilde{C}_A = \exp\left(2\int_{[0,A]}|b(-u)|/\sigma^2(-u)du\right) \in (0, \infty).$$ 
We deduce that for all $x \in \mathbb{R}$ such that $|x| \geq A$,
\begin{equation}\label{eq:inf-densite}
    \pi_X(x) \geq \dfrac{2\min(C_A, \widetilde{C}_A)}{M\sigma^2(x)}\left(\dfrac{A}{|x|}\right)^{2d+1}.
\end{equation}
For exponentially ergodic diffusion processes, under Assumption~\ref{ass:diffusion}, from Equation~\eqref{eq:inf-densite} and for $m$ large enough, there exists a constant $C>0$ such that
\begin{equation}\label{eq:prop2-3}
    \begin{aligned}
        \underset{|y| \leq \sqrt{2(2m + 1)}}{\inf}{\pi_X(y)} \geq &~ \dfrac{2\min(C_A, \widetilde{C}_A)}{M\left[\sigma^2(\sqrt{4m+2}) + \sigma^2(-\sqrt{4m+2})\right]}\left(\dfrac{A}{\sqrt{4m+2}}\right)^{2d+1}\\
        \geq &~ \dfrac{2\min(C_A, \widetilde{C}_A)}{MCm^{D}}\left(\dfrac{A}{\sqrt{4m+2}}\right)^{2d+1}.
    \end{aligned}
\end{equation}
Finally, from Equations~\eqref{eq:prop2-3}, \eqref{eq:prop2-2} and \eqref{eq:prop2-1}, there exist constants $C,C^{\prime} > 0$ depending on $b$, $\sigma^2$, $D$, and $d$ such that for $m$ large enough,
\begin{equation}\label{eq:Gram1}
    \left\|\Psi_m^{-1}\right\|_{\mathrm{op}} \leq C^{\prime}m^{D+d+1/2} ~~ \mathrm{and} ~~ \mathcal{L}(m)\left\|\Psi_m^{-1}\right\|_{\mathrm{op}} \leq C\left(\underset{j \in [\![0,m-1]\!]}{\max}{\left\|h_j\right\|_{\infty}^2}\right)m^{D+d+3/2}.
\end{equation}
For polynomially ergodic processes, under Assumption~\ref{ass:limit-case-elliptic}, and from Equation~\eqref{eq:inf-densite}, we obtain
\begin{equation*}
    \begin{aligned}
        \underset{|y| \leq \sqrt{2(2m + 1)}}{\inf}{\pi_X(y)} 
        \geq &~ \dfrac{2\min(C_A, \widetilde{C}_A)}{M\sigma_{+}^2}\left(\dfrac{A}{\sqrt{4m+2}}\right)^{2d+1},
    \end{aligned}
\end{equation*}
which implies, for $m$ large enough, there exist constants $C,C^{\prime} > 0$ such that
\begin{equation}\label{eq:Gram1.2}
    \left\|\Psi_m^{-1}\right\|_{\mathrm{op}} \leq C^{\prime}m^{d+1/2} ~~ \mathrm{and} ~~ \mathcal{L}(m)\left\|\Psi_m^{-1}\right\|_{\mathrm{op}} \leq C\left(\underset{j \in [\![0,m-1]\!]}{\max}{\left\|h_j\right\|_{\infty}^2}\right)m^{d+3/2}.
\end{equation}

\subsubsection*{Case of the Laguerre basis}

In this case, the estimation interval is $I = [0, +\infty)$ and for all $j \in [\![0,m-1]\!]$, $\phi_j = \ell_j$. From \cite{askey1965mean} or \cite{thangavelu1993lectures}, \textit{Chapter 1, Lemma 1.5.3, page 27}, there exist constants $c_0, \gamma > 0$ such that for all $x \geq 3(2m + 1)$, $\ell_j(x) \leq c_0\exp(-\gamma x)$. Setting $a_m = \sqrt{3(2m + 1)}$ and using a similar reasoning, we obtain
\begin{equation*}
    \begin{aligned}
        u^{\prime}\Psi_m u \geq &~ \underset{|y| \leq a_m}{\inf}{\pi_X(y)}\left[1 - c_0^2m\int_{a_m}^{+\infty}\exp(-2\gamma x)dx\right] = \underset{|y| \leq a_m}{\inf}{\pi_X(y)}\left[1 - \dfrac{c_0^2m}{2\gamma}\exp\left(-\gamma a_m\right)\right].
    \end{aligned}
\end{equation*}
Then, for $a_m = \sqrt{3(2m+1)}$ and $m$ large enough, we have $1 - \dfrac{c_0^2m}{2\gamma}\exp\left(-\gamma\sqrt{3(2m+1)}\right) \geq \dfrac{1}{2}$, 
and under Assumption~\ref{ass:diffusion}, from Equation~\eqref{eq:inf-densite} and for $m$ large enough,
\begin{equation*}
    \begin{aligned}
        \underset{|y| \leq \sqrt{3(2m + 1)}}{\inf}{\pi_X(y)} \geq &~ \dfrac{2\min(C_A, \widetilde{C}_A)}{M\left[\sigma^2(\sqrt{6m+3}) + \sigma^2(-\sqrt{6m+3})\right]}\left(\dfrac{A}{\sqrt{6m+3}}\right)^{2d+1}\\
        \geq &~ \dfrac{2\min(C_A, \widetilde{C}_A)}{MCm^{D}}\left(\dfrac{A^2}{6m+3}\right)^{d+1/2}.
    \end{aligned}
\end{equation*}
We find that there exist constants $C, C^{\prime}>0$ depending on $b, \sigma^2, D$ and $d$ such that
\begin{equation}\label{eq:Gram2}
     \left\|\Psi_m^{-1}\right\|_{\mathrm{op}} \leq C^{\prime}m^{D+d+1/2} ~~ \mathrm{and} ~~ \mathcal{L}(m)\left\|\Psi_m^{-1}\right\|_{\mathrm{op}} \leq C\left(\underset{j \in [\![0,m-1]\!]}{\max}{\left\|\ell_j\right\|_{\infty}^2}\right)m^{D+d+3/2}.
\end{equation}
Under Assumption~\ref{ass:limit-case-elliptic}, since $\sigma^2(x) \leq \sigma_{+}^2$ for all $x \in \mathbb{R}$, there exist constants $C,C^{\prime}>0$ such that
\begin{equation}\label{eq:Gram2-2} 
    \left\|\Psi_m^{-1}\right\|_{\mathrm{op}} \leq C^{\prime}m^{d+1/2} ~~ \mathrm{and} ~~ \mathcal{L}(m)\left\|\Psi_m^{-1}\right\|_{\mathrm{op}} \leq C\left(\underset{j \in [\![0,m-1]\!]}{\max}{\left\|\ell_j\right\|_{\infty}^2}\right)m^{d+3/2}.
\end{equation}
\end{proof}

\subsection{Proof of Theorem~\ref{thm:upper-limit-NAE}}

\begin{proof}
Denote by $P_m(\sigma^2)$ the orthogonal projection of $\sigma^2$ onto $\mathcal{S}_m, ~ m \in \mathcal{M}_n$ with respect to the norm $\|.\|_{\pi}$, and we have $\left\|P_m(\sigma^2) - \sigma^2\right\|_{\pi}^2 = \underset{f \in \mathcal{S}_m}{\inf}{\left\|f - \sigma^2\right\|_{\pi}^2} \leq \underset{f \in \mathcal{S}_{m,L}}{\inf}{\left\|f - \sigma^2\right\|_{\pi}^2}$.
By Equation~\eqref{eq:NA-estimator} and for all $f \in \mathcal{S}_m, ~~ m \in \mathcal{M}_n$, we have the following:
\begin{equation}\label{eq:thm31-1}
    \gamma_n(\w{\sigma}_m^2) - \gamma_n(\sigma^2) \leq \gamma_n(f) - \gamma_n(\sigma^2),
\end{equation}
and by Equation~\eqref{eq:gamma-function} and for all $f \in \mathcal{S}_m, ~ m \in \mathcal{M}_n$,
\begin{equation}\label{eq:thm31-2}
    \gamma_n(f) - \gamma_n(\sigma^2) = \left\|f - \sigma^2\right\|_n^2 + 2\nu_n(\sigma^2 - f) + 2\mu_n(\sigma^2 - f),
\end{equation}
where 
$$\nu_n(\sigma^2 - f) := \dfrac{1}{n}\sum_{k=0}^{n-1}\xi_{k\Delta_n}\left[\sigma^2(X_{k\Delta_n}) - f(X_{k\Delta_n})\right] ~~ \mathrm{and} ~~ \mu_n(\sigma^2 - f) := \dfrac{1}{n}\sum_{k=0}^{n-1}R_{k\Delta_n}\left[\sigma^2(X_{k\Delta_n}) - f(X_{k\Delta_n})\right].$$
We deduce from Equations~\eqref{eq:thm31-1} and \eqref{eq:thm31-2} that for all $m \in \mathcal{M}_n$,
\begin{equation}\label{eq:thm31-3}
    \begin{aligned}
        \left\|\w{\sigma}_m^2 - \sigma^2\right\|_n^2 \leq \left\|P_m(\sigma^2) - \sigma^2\right\|_n^2 + 2\mu_n(\w{\sigma}_m^2 - P_m(\sigma^2)) + 2\nu_n(\w{\sigma}_m^2 - P_m(\sigma^2)).
    \end{aligned}
\end{equation}
Using inequality $2xy \leq rx^2 + r^{-1}y^2$ for all $x,y \in \mathbb{R}$ and for all $r>0$, we obtain the following:
\begin{equation*}
    \begin{aligned}
        \left\|\w{\sigma}_m^2 - \sigma^2\right\|_n^2 \leq &~ \left\|P_m(\sigma^2) - \sigma^2\right\|_n^2 + \dfrac{1}{r}\left\|\w{\sigma}_m^2 - P_m(\sigma^2)\right\|_{\pi}^2 + r\underset{f \in \mathcal{S}_m, ~ \|f\|_{\pi}^2 = 1}{\sup}{\nu_n^2(f)} + \dfrac{1}{r}\left\|\w{\sigma}_m^2 - P_m(\sigma^2)\right\|_n^2\\
        & + \dfrac{r}{n}\sum_{k=0}^{n-1}R_{k\Delta_n}^2.
    \end{aligned}
\end{equation*}
In the event $\Omega_m$, we have $(1/2)\|\w{\sigma}_m^2 - f\|_{\pi}^2 \leq \|\w{\sigma}_m^2 - f\|_n^2 \leq (3/2)\|\w{\sigma}_m^2 - f\|_{\pi}^2$. Then, for all $m \in \mathcal{M}_n$,
$$\left\|\w{\sigma}_m^2 - \sigma^2\right\|_n^2\mathds{1}_{\Omega_m} \leq \left\|P_m(\sigma^2) - \sigma^2\right\|_n^2 + \dfrac{3}{r}\left\|\w{\sigma}_m^2 - P_m(\sigma^2)\right\|_{n}^2 + r\underset{f \in \mathcal{S}_m, ~ \|f\|_{\pi}^2 = 1}{\sup}{\nu_n^2(f)}\mathds{1}_{\Omega_m} + \dfrac{r}{n}\sum_{k=0}^{n-1}R_{k\Delta_n}^2,$$
and then
$$\left(1 - \dfrac{6}{r}\right)\left\|\w{\sigma}_m^2 - \sigma^2\right\|_n^2\mathds{1}_{\Omega_m} \leq \left(1 + \dfrac{6}{r}\right)\left\|P_m(\sigma^2) - \sigma^2\right\|_n^2 + r\underset{f \in \mathcal{S}_m, ~ \|f\|_{\pi}^2 = 1}{\sup}{\nu_n^2(f)} + \dfrac{r}{n}\sum_{k=0}^{n-1}R_{k\Delta_n}^2.$$
For $r = 12$, we deduce that for all $m \in \mathcal{M}_n$,
\begin{equation}\label{eq:risk}
    \begin{aligned}
        \mathbb{E}\left[\left\|\w{\sigma}_m^2 - \sigma^2\right\|_n^2\right] = &~ \mathbb{E}\left[\left\|\w{\sigma}_m^2 - \sigma^2\right\|_n^2\mathds{1}_{\Omega_n}\right] + \mathbb{E}\left[\left\|\w{\sigma}_m^2 - \sigma^2\right\|_n^2\mathds{1}_{\Omega_n^c}\right]\\
        \leq &~ 3\underset{f \in \mathcal{S}_{m,L}}{\inf}\left\|f - \sigma^2\right\|_{\pi}^2 + 24\mathbb{E}\left[\underset{f \in \mathcal{S}_m, ~ \|f\|_{\pi}^2 = 1}{\sup}{\nu_n^2(f)}\right] + \dfrac{24}{n}\sum_{k=0}^{n-1}\mathbb{E}\left[R_{k\Delta_n}^2\right]\\
        & + 2mL\mathbb{P}\left(\Omega_m^c\right) + 2\sqrt{\mathbb{E}\left[\sigma^4(X_0)\right]\mathbb{P}\left(\Omega_m^{c}\right)}.
    \end{aligned}
\end{equation}

\subsection*{Upper bound of $\mathbb{E}\left[R_{k\Delta_n}^2\right]$ for each $k \in \{0, \ldots, n-1\}$}

For all $k \in [\![0, n-1]\!]$, we have
\begin{equation}\label{eq:R}
    \begin{aligned}
        \mathbb{E}\left[R_{k\Delta_n}^2\right] \leq &~ 2\mathbb{E}\left[\left(R_{k\Delta_n}^{(1)}\right)^2\right] + 2\mathbb{E}\left[\left(R_{k\Delta_n}^{(2)}\right)^2\right] + 2\mathbb{E}\left[\left(R_{k\Delta_n}^{(3)}\right)^2\right].
    \end{aligned}
\end{equation}
From Equation~\eqref{eq:Residual} and using the Cauchy Schwarz inequality, the Burkholder-Davis-Gundy inequality, and Equation~\eqref{eq:prop1-7}, there exists a constant $C>0$ such that for each $k \in \{0, \ldots, n-1\}$,
\begin{equation}\label{eq:R1}
    \begin{aligned}
        \mathbb{E}\left[\left(R_{k\Delta_n}^{(1)}\right)^2\right] \leq &~  \dfrac{2}{\Delta_n^2}\mathbb{E}\left[\left(\int_{k\Delta_n}^{(k+1)\Delta_n}b(X_s)ds\right)^4\right] \leq 2\Delta_n\int_{k\Delta_n}^{(k+1)\Delta_n}\mathbb{E}\left[b^4(X_s)\right]ds\\
        \leq &~ 2C\Delta_n^2\left(1 + \mathbb{E}\left[\underset{s \in [k\Delta_n, (k+1)\Delta_n]}{\sup}|X_s|^4\right]\right) \leq C\Delta_n^2.
    \end{aligned}
\end{equation}
For the second term on the right-hand side of Equation~\eqref{eq:R}, from Equation~\eqref{eq:Residual}, note that since $\Gamma = b\sigma\sigma^{\prime} + [\sigma\sigma^{\prime\prime} + (\sigma^{\prime})^2]\sigma^2$,  from Assumption~\ref{ass:Regular-Sigma} and by Proposition~\ref{prop:Exp-Holder} with $p=8, ~ d > 8D$, we obtain $\mathbb{E}[\Gamma^2(X_0)] < \infty$. Then, using Cauchy-Schwarz's inequality, there exists a constant $C>0$ such that
\begin{equation}\label{eq:R2}
    \begin{aligned}
        \mathbb{E}\left[\left(R_{k\Delta_n}^{(2)}\right)^2\right] \leq &~ \dfrac{1}{\Delta_n}\int_{k\Delta_n}^{(k+1)\Delta_n}((k+1)\Delta_n - s)^2\mathbb{E}\left[\Gamma^2(X_{k\Delta_n})\right]ds \leq \Delta_n^2\mathbb{E}\left[\Gamma^2(X_0)\right] \leq C\Delta_n^2.
    \end{aligned}
\end{equation}
For the third term on the right-hand side of Equation~\eqref{eq:R} and from Equation~\eqref{eq:Residual}, using Cauchy-Schwarz's inequality, the H\"older inequality and the Burkholder-Davis-Gundy inequality, there exists a constant $C>0$ such that for each $k \in \{0, \ldots, n-1\}$,
\begin{equation*}
    \begin{aligned}
        \mathbb{E}\left[\left(R_{k\Delta_n}^{(3)}\right)^2\right] \leq &~ \dfrac{4}{\Delta_n^2}\left(\mathbb{E}\left[\left(\int_{k\Delta_n}^{(k+1)\Delta_n}(b(X_s) - b(X_{k\Delta_n}))ds\right)^4\right]\right)^{1/2}\left(\mathbb{E}\left[\left(\int_{k\Delta_n}^{(k+1)\Delta_n}\sigma(X_s)dW_s\right)^4\right]\right)^{1/2}\\
        \leq &~ C\left(\int_{k\Delta_n}^{(k+1)\Delta_n}\mathbb{E}\left[(b(X_s) - b(X_{k\Delta_n}))^4\right]ds\right)^{1/2}\left(\int_{k\Delta_n}^{(k+1)\Delta_n}\mathbb{E}\left[\sigma^4(X_s)\right]ds\right)^{1/2}.
    \end{aligned}
\end{equation*}
Under Assumption~\ref{ass:StrongAssDrift} and by Proposition~\ref{prop:Exp-Holder} with $p = 2 < d/D$, where $d > 8D$ and $D \geq 1$, or under Assumption~\ref{ass:limit-case-elliptic} and by Proposition~\ref{prop:Exp-Holder} with $p = 2 < d-1$ where $d > 4$, we obtain
\begin{equation}\label{eq:R3}
    \mathbb{E}\left[\left(R_{k\Delta_n}^{(3)}\right)^2\right] \leq C\Delta_n^2.
\end{equation}
Finally, from Equations~\eqref{eq:R3}, \eqref{eq:R2}, \eqref{eq:R1} and \eqref{eq:R}, there exists a constant $C>0$ such that for all $k \in \{0, \ldots, n-1\}$,
\begin{equation}\label{eq:R-bis}
    \mathbb{E}\left[\left(R_{k\Delta_n}\right)^2\right] \leq C\Delta_n^{2},
\end{equation}

\subsection*{Upper bound of $\mathbb{E}\left[\underset{f \in \mathcal{S}_m, ~ \|f\|_{\pi}^2 = 1}{\sup}{\nu_n^2(f)}\right]$}

Fix $m \in \mathcal{M}_n$.  For all $f \in \mathcal{S}_m$, there exists $\mathbf{a} = (a_0, \ldots, a_{m-1}) \in \mathbb{R}^m$ such that $f = \sum_{i \in [\![0, m-1]\!]}a_i\phi_i$. Moreover, we have $\left\|f\right\|_{\pi}^2 = \mathbf{a}^{\prime}\Psi_m\mathbf{a} = 1  = \left\|\Psi_m^{1/2}\mathbf{a}\right\|_2^2 \iff \mathbf{a} = \Psi_{m}^{-1/2}\mathbf{u}$, where $\mathbf{u} = (u_0, \ldots, u_{m-1}) \in \mathbb{R}^m$ such that $\|\mathbf{u}\|_{2,m} = 1$. Then, for all $f = \sum_{i=0}^{m-1}a_i\phi_i \in \mathcal{S}_m$, we have the following:
\begin{equation}\label{eq:term00}
    \nu_n^2(f) = \left(\dfrac{1}{n}\sum_{k=0}^{n-1}\xi_{k\Delta_n}\sum_{i=0}^{m-1}a_i\phi_i(X_{k\Delta_n})\right)^2 = \left(\sum_{i=0}^{m-1}a_i\nu(\phi_i)\right)^2 \leq \|\mathbf{a}\|_{2,m}^2\sum_{i=0}^{m-1}\nu_n^2(\phi_i) \leq \left\|\Psi_m^{-1}\right\|_{\mathrm{op}}\sum_{i=0}^{m-1}\nu_n^2(\phi_i).
\end{equation}
We now focus on the upper bound of each term $\nu_n^2(\phi_i), ~ i \in \{0, \ldots, m-1\}$. We have the following:
\begin{equation}\label{eq:term0}
    \begin{aligned}
       \mathbb{E}\left[\nu_n^2(\phi_i)\right] = &~ \dfrac{1}{n^2}\sum_{k,k^{\prime} = 0}^{n-1}\mathbb{E}\left[\xi_{k\Delta_n}\xi_{k^{\prime}\Delta_n}\phi_i(X_{k\Delta_n})\phi_i(X_{k^{\prime}\Delta_n})\right]\\
        = &~ \dfrac{1}{n^2}\sum_{k=0}^{n-1}\mathbb{E}\left[\xi_{k\Delta_n}^2\phi_i^{2}(X_{k\Delta_n})\right] + \dfrac{1}{n^2}\sum_{k \neq k^{\prime}}\mathbb{E}\left[\xi_{k\Delta_n}\xi_{k^{\prime}\Delta_n}\phi_i(X_{k\Delta_n})\phi_i(X_{k^{\prime}\Delta_n})\right].
    \end{aligned}
\end{equation}
On the one hand, by Equation~\eqref{eq:Error-term} and using Cauchy-Schwaz's inequality, for all $k \in \{0, \ldots, n-1\}$,
\begin{equation*}
  \begin{aligned}
      \mathbb{E}\left[\zeta_{k\Delta_n}^2\phi_i^2(X_{k\Delta_n})\right] = &~ 3\|\phi_i\|_{\infty}^2\left(\mathbb{E}\left[\left(\xi_{k\Delta_n}^{(1)}\right)^2\right] + \mathbb{E}\left[\left(\xi_{k\Delta_n}^{(2)}\right)^2\right] + \mathbb{E}\left[\left(\xi_{k\Delta_n}^{(3)}\right)^2\right]\right).
  \end{aligned}  
\end{equation*}
By Proposition~\ref{prop:Exp-Holder} with $p=8 < d/D$ and using the Burkholder-Davis-Gundy inequality, we obtain the following: 
\begin{equation*}
    \begin{aligned}
        \exists C>0, ~ \mathbb{E}\left[(\xi_{k\Delta_n}^{(1)})^2\right] \leq &~ C\mathbb{E}\left[\sigma^4(X_0)\right] < \infty,\\
        \exists C>0, ~ \mathbb{E}\left[(\xi_{k\Delta_n}^{(2)})^2\right] \leq &~ \dfrac{4}{\Delta_n^2}\int_{k\Delta_n}^{(k+1)\Delta_n}((k+1)\Delta_n - s)^2\mathbb{E}\left[((\sigma^{\prime})^2\sigma^4)(X_s)\right]ds \leq \Delta_n\mathbb{E}\left[((\sigma^{\prime})^2\sigma^4)(X_0)\right]\\
        \leq &~ C\Delta_n,\\
        \exists C>0, ~ \mathbb{E}\left[(\xi_{k\Delta_n}^{(3)})^2\right] \leq &~ C\Delta_n\left(\mathbb{E}\left[b^4(X_0)\right]\mathbb{E}\left[\sigma^4(X_0)\right]\right)^{1/2} \leq C\Delta_n.
    \end{aligned}
\end{equation*}
We deduce from the above results that there exists $C>0$ such that for all $k \in \{0, \ldots, n-1\}$,
\begin{equation}\label{eq:term1}
    \mathbb{E}\left[\xi_{k\Delta_n}^2\phi_i^{2}(X_{k\Delta_n})\right] \leq C\|\phi_i\|_{\infty}^2.
\end{equation}
On the other hand, note that from Equation~\ref{eq:Error-term} and for all $k \in \{0, \ldots, n-1\}$, 
\begin{equation*}
    \begin{aligned}
        \mathbb{E}\left[\xi_{k\Delta_n} \biggm\vert \mathcal{F}_{k\Delta_n}^X\right] = \mathbb{E}\left[\xi_{k\Delta_n}^{(1)} \biggm\vert \mathcal{F}_{k\Delta_n}^X\right] + \mathbb{E}\left[\xi_{k\Delta_n}^{(2)} \biggm\vert \mathcal{F}_{k\Delta_n}^X\right] + \mathbb{E}\left[\xi_{k\Delta_n}^{(3)} \biggm\vert \mathcal{F}_{k\Delta_n}^X\right] = 0.
    \end{aligned}
\end{equation*}
Then, for all $k,k^{\prime} \in \{0 \ldots, n-1\}$ such that $k \neq k^{\prime}$, we have the following:
\begin{equation}\label{eq:term2}
    \begin{aligned}
        & \mathbb{E}\left[\xi_{k\Delta_n}\xi_{k^{\prime}\Delta_n}\phi_i(X_{k\Delta_n})\phi_i(X_{k^{\prime}\Delta_n})\right]\\
        & = \mathbb{E}\left[\xi_{(k \land k^{\prime})\Delta_n}\phi_i(X_{(k\land k^{\prime})\Delta_n})\phi_i(X_{(k \vee k^{\prime})\Delta_n})\mathbb{E}\left(\xi_{(k \vee k^{\prime})\Delta_n} \biggm\vert \mathcal{F}_{(k \vee k^{\prime})\Delta_n}^X\right)\right] = 0.
    \end{aligned}
\end{equation}
We deduce from Equations~\eqref{eq:term2}, \eqref{eq:term1}, \eqref{eq:term0} and \eqref{eq:term00} that there exists a constant $C>0$ such that
\begin{equation}\label{eq:kappa}
    \mathbb{E}\left[\underset{f \in \mathcal{S}_m, ~ \|f\|_{\pi}^2 = 1}{\sup}{\nu_n^2(f)}\right] \leq C\dfrac{\left\|\Psi_m^{-1}\right\|_{\mathrm{op}}}{n}\sum_{i=0}^{m-1}\|\phi_i\|_{\infty}^2 \leq C\underset{i\in \{0, \ldots, m-1\}}{\max}{\|\phi_i\|_{\infty}^2} \dfrac{m\left\|\Psi_m^{-1}\right\|_{\mathrm{op}}}{n}.
\end{equation}
Under Assumption~\ref{ass:StrongAssDrift} and from Propositions~\ref{prop:Exp-Holder}, Equations~\eqref{eq:Gram1} and \eqref{eq:Gram1.2} in the proof of Proposition~\ref{prop:operator-norm-identity}, from Lemma~\ref{lm:omega-comp} and  Equations~\eqref{eq:kappa}, \eqref{eq:R-bis} and \eqref{eq:risk}, we deduce that there exists a constant $C>0$ such that for all $m \in \mathcal{M}_n$,
\begin{equation*}
    \begin{aligned}
        \mathbb{E}\left[\left\|\w{\sigma}_m^2 - \sigma^2\right\|_n^2\right] \leq &~ 3\underset{f \in \mathcal{S}_{m,L}}{\inf}{\left\|f - \sigma^2\right\|_{\pi}^2} + C\left(\dfrac{m\left\|\Psi_m^{-1}\right\|_{\mathrm{op}}}{n} + mL\mathbb{P}\left(\Omega_m^c\right) + \sqrt{\mathbb{P}\left(\Omega_m^{c}\right)} + \Delta_n^{2}\right)\\
        \leq &~ 3\underset{f \in \mathcal{S}_{m,L}}{\inf}{\left\|f - \sigma^2\right\|_{\pi}^2} + C\left(\dfrac{m^{k_0}}{n} + m\log(n)\exp\left(-\frac{\gamma}{8}\sqrt{\varepsilon_0}\log^2(n)\right) + C\Delta_n^{2}\right),
    \end{aligned}
\end{equation*}
where $k_0 = D+d+3/2$ and $L = \log(n)$. Under Assumption~\ref{ass:limit-case-elliptic}, from Equations~\eqref{eq:Gram1.2} and \eqref{eq:Gram2-2} in the proof of Proposition~\ref{prop:operator-norm-identity}, from Lemma~\ref{lm:omega-comp} and  Equations~\eqref{eq:kappa}, \eqref{eq:R-bis} and \eqref{eq:risk}, we deduce that for all $m \in \mathcal{M}_n$,
\begin{equation*}
    \begin{aligned}
        \mathbb{E}\left[\left\|\w{\sigma}_m^2 - \sigma^2\right\|_n^2\right] \leq &~ 3\underset{f \in \mathcal{S}_{m,L}}{\inf}{\left\|f - \sigma^2\right\|_{\pi}^2} + C\left(\dfrac{m^{d_0}}{n} + \dfrac{m\log(n)}{n^{2}} + \dfrac{1}{n} + C\Delta_n^{2}\right),
    \end{aligned}
\end{equation*}
where $d_0 = d+3/2$, which concludes the proof.
\end{proof}

\subsection{Proof of Theorem~\ref{thm:upper-limit2-NAE}}

From Equations~\eqref{eq:risk} in the proof of Theorem~\ref{thm:upper-limit-NAE}, we have
\begin{equation*}
    \begin{aligned}
        \mathbb{E}\left[\left\|\w{\sigma}_m^2 - \sigma^2\right\|_n^2\right] \leq &~ 3\underset{f \in \mathcal{S}_{m,L}}{\inf}\left\|f - \sigma^2\right\|_{\pi}^2 + 24\mathbb{E}\left[\underset{f \in \mathcal{S}_m, ~ \|f\|_{\pi}^2 = 1}{\sup}{\nu_n^2(f)}\right] + \dfrac{24}{n}\sum_{k=0}^{n-1}\mathbb{E}\left[R_{k\Delta_n}^2\right]\\
        & + 2mL\mathbb{P}\left(\Omega_m^c\right) + 2\sqrt{\mathbb{E}\left[\sigma^4(X_0)\right]\mathbb{P}\left(\Omega_m^{c}\right)}.
    \end{aligned}
\end{equation*}
For all $f \in \mathcal{S}_m, ~ m \in \mathcal{M}_n$, $\nu_n(f) = n^{-1}\sum_{k = 0}^{n-1}\zeta_{k\Delta_n}f(X_{k\Delta_n})$ with $\zeta_{k\Delta_n} = \Delta_n^{-1}\sigma^2(X_{k\Delta_n})[(W_{(k+1)\Delta_n} - W_{k\Delta_n})^2 - \Delta_n]$. For all $k,k^{\prime} \in \{0, \ldots, n-1\}$ such that $k \neq k^{\prime}$ and for all $f \in \mathcal{S}_m, ~ m \in \mathcal{M}_n$, we have 
\begin{align*}
    & \mathbb{E}\left[\zeta_{k\Delta_n}f(X_{k\Delta_n})\zeta_{k\Delta_n}f(X_{k\Delta_n}) \biggm\vert \mathcal{F}_{k\Delta_n}^X\right]\\
    & = \zeta_{(k \land k^{\prime})\Delta_n}f(X_{(k \land k^{\prime})\Delta_n})f(X_{(k \vee k^{\prime})\Delta_n}) \mathbb{E}\left[\zeta_{(k \vee k^{\prime})\Delta_n} \biggm\vert \mathcal{F}_{(k \vee k^{\prime})}^X\right] = 0.
\end{align*}
In addition, for all $(k,i) \in \{0, \ldots, n-1\} \times \{0, \ldots, m-1\}$, we have
\begin{align*}
    \mathbb{E}\left[\zeta_{k\Delta_n}^2\phi_i^2(X_{k\Delta_n})\right] \leq &~ 2\Delta_n^{-2}\mathbb{E}\left[\sigma^4(X_{k\Delta_n})\phi_i^2(X_{k\Delta_n})\left\{\Delta_n^2 + \mathbb{E}\left((W_{(k+1)\Delta_n} - W_{k\Delta_n})^4\right)\right]\right\}\\
    = &~  \mathcal{O}\left(\|\phi_i\|_{\infty}^2\mathbb{E}\left[\sigma^4(X_0)\right]\right).
\end{align*}
We deduce from Equations~\eqref{eq:term00} and \eqref{eq:term0} that there exists a constant $C>0$ such that
$$\mathbb{E}\left[\underset{f \in \mathcal{S}_m, ~ \|f\|_{\pi}^2 = 1}{\sup}{\nu_n^2(f)}\right] \leq C \dfrac{m}{n}\left\|\Psi_m^{-1}\right\|_{\mathrm{op}}.$$
It follows that 
\begin{equation}\label{eq:risk-bound}
    \begin{aligned}
        \mathbb{E}\left[\left\|\w{\sigma}_m^2 - \sigma^2\right\|_n^2\right] \leq &~ 3\underset{f \in \mathcal{S}_{m,L}}{\inf}\left\|f - \sigma^2\right\|_{\pi}^2 + 24C \dfrac{m}{n}\left\|\Psi_m^{-1}\right\|_{\mathrm{op}} + \dfrac{24}{n}\sum_{k=0}^{n-1}\mathbb{E}\left[R_{k\Delta_n}^2\right]\\
        & + 2mL\mathbb{P}\left(\Omega_m^c\right) + 2\sqrt{\mathbb{E}\left[\sigma^4(X_0)\right]\mathbb{P}\left(\Omega_m^{c}\right)}.
    \end{aligned}
\end{equation}
Now, focusing on the third term on the right-hand side of the above equation, for all $k \in [\![0, n-1]\!]$, we have
$$ \mathbb{E}\left[\widetilde{R}_{k\Delta_n}^2\right] \leq 3\mathbb{E}\left[\left(\widetilde{R}_{k\Delta_n}^{(1)}\right)^2\right] + 3\mathbb{E}\left[\left(\widetilde{R}_{k\Delta_n}^{(2)}\right)^2\right] + 3\mathbb{E}\left[\left(\widetilde{R}_{k\Delta_n}^{(3)}\right)^2\right].$$
From Equation~\eqref{eq:Residual} and using the Cauchy Schwarz inequality and the Burkholder-Davis-Gundy inequality, there exists a constant $C>0$ such that for each $k \in \{0, \ldots, n-1\}$,
\begin{equation*}
    \begin{aligned}
        \mathbb{E}\left[\left(\widetilde{R}_{k\Delta_n}^{(1)}\right)^2\right] \leq &~  \dfrac{2}{\Delta_n^2}\mathbb{E}\left[\left(\int_{k\Delta_n}^{(k+1)\Delta_n}b(X_s)ds\right)^4\right] + \dfrac{C}{\Delta_n^2}\mathbb{E}\left[\left(\int_{k\Delta_n}^{(k+1)\Delta_n}\left(\sigma(X_s) - \sigma(X_{k\Delta_n})\right)^2ds\right)^2\right]\\
        \leq &~ 2C\Delta_n^2\left(1 + \mathbb{E}\left[\underset{s \in [k\Delta_n, (k+1)\Delta_n]}{\sup}|X_s|^4\right]\right) + \dfrac{C}{\Delta_n}\int_{k\Delta_n}^{(k+1)\Delta_n}\mathbb{E}\left[\left(\sigma(X_s) - \sigma(X_{k\Delta_n})\right)^4\right]ds.
    \end{aligned}
\end{equation*}
Under Assumption~\ref{ass:StrongAssDrift} and by Equation~\eqref{eq:prop1-7} and Proposition~\ref{prop:Exp-Holder} with $p = 2 < d/D$, where $d > 8D$ and $D \geq 1$, or under Assumption~\ref{ass:limit-case-elliptic} and by Equation~\eqref{eq:prop1-7} and Proposition~\ref{prop:Exp-Holder} with $p = 2 < d-1$, where $d > 4$, we obtain for each $k \in \{0, \ldots, n-1\}$, $\mathbb{E}[(\widetilde{R}_{k\Delta_n}^{(1)})^2] = \mathcal{O}\left(\Delta_n^{2\alpha}\right), ~~ \alpha \in [1/2, 1]$. For the second term on the right-hand side of Equation~\eqref{eq:Residual}, we have $\widetilde{R}_{k\Delta_n}^{(2)} = R_{k\Delta_n}^{(3)}$, then $\mathbb{E}[(\widetilde{R}_{k\Delta_n}^{(2)})^2] = \mathcal{O}\left(\Delta_n^2\right)$. 
For the third term on the right-hand side of Equation~\eqref{eq:Residual}, using once again Cauchy-Schwarz's inequality, H\"older's inequality and Burkholder-Davis-Gundy's inequality, for each $k \in \{0, \ldots, n-1\}$, the following holds:
\begin{equation*}
    \begin{aligned}
        &\mathbb{E}\left[\left(\widetilde{R}_{k\Delta_n}^{(3)}\right)^2\right]\\
        &\leq \dfrac{C}{\Delta_n^{3/2}}\left(\mathbb{E}\left[\sigma^4(X_{k\Delta_n})\left(W_{(k+1)\Delta_n} - W_{k\Delta_n}\right)^4\right]\right)^{1/2} \left(\int_{k\Delta_n}^{(k+1)\Delta_n}\mathbb{E}\left[(\sigma(X_s) - \sigma(X_{k\Delta_n}))^4\right]ds\right)^{1/2}\\
        &\leq \dfrac{C}{\Delta_n^{1/2}}\left(\mathbb{E}\left[\sigma^4(X_{k\Delta_n})\right]\right)^{1/2}\left(\int_{k\Delta_n}^{(k+1)\Delta_n}\mathbb{E}\left[(\sigma(X_s) - \sigma(X_{k\Delta_n}))^4\right]ds\right)^{1/2},
    \end{aligned}
\end{equation*}
where $C>0$ is a constant. Using a similar reasoning as in the previous cases, we deduce that under Assumption~\ref{ass:StrongAssDrift} or Assumption~\ref{ass:limit-case-elliptic}, we have $\mathbb{E}[(\widetilde{R}_{k\Delta_n}^{(3)})^2] \leq C\Delta_n^{\alpha}, ~~ \alpha \in [1/2, 1]$.
Finally, combining the three terms and their respective bounds, we obtain for all $k \in \{0, \ldots, n-1\}$, $\mathbb{E}\left[\left(\widetilde{R}_{k\Delta_n}\right)^2\right] \leq C\Delta_n^{\alpha}, ~~ \alpha \in [1/2, 1]$. Then, back to Equation~\eqref{eq:risk-bound}, there exists a constant $C>0$ such that
\begin{equation*}
    \begin{aligned}
        \mathbb{E}\left[\left\|\w{\sigma}_m^2 - \sigma^2\right\|_n^2\right] \leq &~ 3\underset{f \in \mathcal{S}_{m,L}}{\inf}\left\|f - \sigma^2\right\|_{\pi}^2 + C\left(\dfrac{m}{n}\left\|\Psi_m^{-1}\right\|_{\mathrm{op}} + \dfrac{1}{n}\sum_{k=0}^{n-1}\mathbb{E}\left[R_{k\Delta_n}^2\right] + mL\mathbb{P}\left(\Omega_m^c\right) + \sqrt{\mathbb{P}\left(\Omega_m^{c}\right)}\right).
    \end{aligned}
\end{equation*}
The above result together with Proposition~\ref{prop:operator-norm-identity} and its proof and Lemma~\ref{lm:omega-comp} lead to the expected risk bounds.

\subsection{Proof of Theorem~\ref{thm:adaptation}}

\begin{proof}
    Recall that $$\w{m} := \underset{m \in \mathcal{M}_n}{\inf}{\left\{\gamma_n(\w{\sigma}_m^2) + \mathrm{pen}(m)\right\}},$$ 
    where $\mathcal{M}_n = \left\{1, \ldots, m_{\max}\right\}$ with $m_{\max} = \left\lceil(n\Delta_n)^{2/(2s_0+1)}\right\rceil$ and $s_0 = D + d + 2$ under Assumption~\ref{ass:StrongAssDrift} and $s_0 = 2d + 3$ under Assumption~\ref{ass:limit-case-elliptic}. Then, for all $m \in \mathcal{M}_n$,
    $$\gamma_n(\w{\sigma}_{\w{m}}^2) + \mathrm{pen}(\w{m}) \leq \gamma_n(\w{\sigma}_m^2) + \mathrm{pen}(m) \leq \gamma_n(P_m(\sigma^2)) + \mathrm{pen}(m),$$
    where $P_m(\sigma^2)$ is the orthogonal projection of $\sigma_I^2$ onto the approximation space $\mathcal{S}_m$. It follows that
    $$\gamma_n(\w{\sigma}_{\w{m}}^2) - \gamma_n(\sigma_I^2) \leq \gamma_n(P_m(\sigma^2)) - \gamma_n(\sigma_I^2) + \mathrm{pen}(m) - \mathrm{pen}(\w{m})$$
    and from Equation~\eqref{eq:thm31-3} and for all $m \in \mathcal{M}_n$,
    \begin{equation*}
    \begin{aligned}
        \left\|\w{\sigma}_{\w{m}}^2 - \sigma_I^2\right\|_n^2 \leq \left\|P_m(\sigma^2) - \sigma_I^2\right\|_n^2 + 2\mu_n(\w{\sigma}_{\w{m}}^2 - P_m(\sigma^2)) + 2\nu_n(\w{\sigma}_{\w{m}}^2 - P_m(\sigma^2)) + \mathrm{pen}(m) - \mathrm{pen}(\w{m}).
    \end{aligned}
\end{equation*}
Recall that $\nu_n = \nu_{n,1} + \nu_{n,2} + \nu_{n,3}$, where, for all $f \in \mathcal{S}_m$, $m \in \mathcal{M}_n$, 
\begin{equation*}
    \begin{aligned}
        \nu_{n,1}(f) := &~ \dfrac{1}{n}\sum_{k=0}^{n-1}\xi_{k\Delta_n}^{(1)}f(X_{k\Delta_n}), ~~ \nu_{n,2}(f) := \dfrac{1}{n}\sum_{k=0}^{n-1}\xi_{k\Delta_n}^{(2)}f(X_{k\Delta_n}) ~~ \mathrm{and} ~~ \nu_{n,3}(f) := \dfrac{1}{n}\sum_{k=0}^{n-1}\xi_{k\Delta_n}^{(3)}f(X_{k\Delta_n}).
    \end{aligned}
\end{equation*}
Using the inequality $2xy \leq x^2/r + ry^2$ for all $(x,y,r) \in \mathbb{R}^2 \times \mathbb{R}_+$, we obtain for all $m \in \mathcal{M}_n$,
\begin{equation*}
    \begin{aligned}
        \left\|\w{\sigma}_{\w{m}}^2 - \sigma_I^2\right\|_n^2 \leq &~ \left\|P_m(\sigma^2) - \sigma_I^2\right\|_n^2 + r^{-1}\left\|\w{\sigma}_{\w{m}}^2 - P_m(\sigma^2)\right\|_{\pi}^2 + 3r\underset{f \in \mathcal{S}_{m} + \mathcal{S}_{\w{m}}, ~ \|f\|_{\pi}^2 = 1}{\sup}{\nu_{n,1}^2(f)}\\
        & + 3r\underset{f \in \mathcal{S}_{m_{\max}}, ~ \|f\|_{\pi}^2 = 1}{\sup}{\nu_{n,2}^2(f)} + 3r\underset{f \in \mathcal{S}_{m_{\max}}, ~ \|f\|_{\pi}^2 = 1}{\sup}{\nu_{n,3}^2(f)} + \dfrac{1}{r}\left\|\w{\sigma}_{\w{m}}^2 - P_m(\sigma^2)\right\|_n^2\\
        & + \dfrac{r}{n}\sum_{k=0}^{n-1}R_{k\Delta_n}^2 + \mathrm{pen}(m) - \mathrm{pen}(\w{m}).
    \end{aligned}
\end{equation*}
From the proof of Theorem~\ref{thm:upper-limit-NAE} (Equations~\eqref{eq:term00}, \eqref{eq:term0}, \eqref{eq:term1} and \eqref{eq:term2}), there exists a constant $C>0$ such that
$$\mathbb{E}\left[\underset{f \in \mathcal{S}_{m_{\max}}, ~ \|f\|_{\pi}^2 = 1}{\sup}{\nu_{n,j}^2(f)}\right] \leq C\underset{i \in \{0, \ldots, m-1\}}{\max}{\|\phi_i\|_{\infty}^2}\dfrac{m\left\|\Psi_m^{-1}\right\|_{\mathrm{op}}\Delta_n}{n} = \mathcal{O}\left(\dfrac{m^{k_0}\Delta_n}{n}\right), ~~~ j\in \{2, 3\}.$$
Then, in the event $\Omega_{m_{\max}}$ and since $\mathcal{S}_{m} + \mathcal{S}_{\w{m}} \subset \mathcal{S}_{m_{\max}}$ for all $m \in \mathcal{M}_n$, for $r=12$, and by Equations~\eqref{eq:risk} and \eqref{eq:R-bis}, for all $m \in \mathcal{M}_n$, there exists a constant $C>0$ such that
\begin{equation*}
    \begin{aligned}
        \mathbb{E}\left[\left\|\w{\sigma}_{\w{m}}^2 - \sigma^2\right\|_n^2\mathds{1}_{\Omega_{m_{\max}}}\right] \leq &~ 3\left\{\underset{f \in \mathcal{S}_{m,L}}{\inf}{\left\|f - \sigma^2\right\|_{\pi}^2} + \mathrm{pen}(m)\right\} + C\left(\dfrac{m_{\max}^{k_0}\Delta_n}{n} + \Delta_n^{2}\right)\\
        & + 12\mathbb{E}\left[\left(\underset{f \in \mathcal{S}_{m} + \mathcal{S}_{\w{m}}, ~ \|f\|_{\pi}^2 = 1}{\sup}{\nu_{n,1}^2(f)} - Q(m,\w{m})\right)_{+}\mathds{1}_{\Omega_{m_{\max}}}\right],
    \end{aligned}
\end{equation*}
where for all $m,m^{\prime} \in \mathcal{M}_n, ~~  12Q(m,m^{\prime}) \leq \mathrm{pen}(m) + 3\mathrm{pen}(m^{\prime})$. It follows that for all $m \in \mathcal{M}_n$, 
\begin{equation}\label{eq:adaptive-1}
    \begin{aligned}
        \mathbb{E}\left[\left\|\w{\sigma}_{\w{m}}^2 - \sigma^2\right\|_n^2\right] \leq &~ 3\left\{\underset{f \in \mathcal{S}_{m,L}}{\inf}{\left\|f - \sigma^2\right\|_{\pi}^2} + \mathrm{pen}(m)\right\} + 2m_{\max}L\mathbb{P}\left(\Omega_{m_{\max}}^c\right) + 2\sqrt{\mathbb{E}\left[X_0^4\right]\mathbb{P}\left(\Omega_{m_{\max}}^c\right)}\\
        & + 12\sum_{m^{\prime} \in \mathcal{M}_n}\mathbb{E}\left[\left(\underset{f \in \mathcal{S}_{m \vee m^{\prime}}, ~ \|f\|_{\pi}^2 = 1}{\sup}{\nu_{n,1}^2(f)} - Q(m, m^{\prime})\right)_{+}\right] + C\left(\dfrac{m_{\max}^{k_0}\Delta_n}{n} + \Delta_n^{2}\right).
    \end{aligned}
\end{equation}

\paragraph{\textbf{Case of an exponentially ergodic diffusion processes.}}

Consider the following lemma.
\begin{lemma}\label{lm:lemma-exponential}
    Under Assumptions~\ref{ass:drift}, \ref{ass:diffusion}, \ref{ass:NonDegeneracy}, \ref{ass:StrongAssDrift}, \ref{ass:stationary} and \ref{ass:Regular-Sigma}, and assuming that $d>8D$ and $n\Delta_n^2 = \varepsilon_0\log^4(n)$, there exists a constant $C>0$ such that
    $$\sum_{m^{\prime} \in \mathcal{M}_n}\mathbb{E}\left[\left(\underset{f \in \mathcal{S}_{m \vee m^{\prime}}, ~ \|f\|_{\pi}^2 = 1}{\sup}{\nu_{n,1}^2(f)} - Q(m, m^{\prime})\right)_{+}\right] \leq \dfrac{C}{n}, ~~~~ m \in \mathcal{M}_n,$$
    where, for all $m,m^{\prime} \in \mathcal{M}_n, ~~ Q(m,m^{\prime}) = \mathfrak{c}\dfrac{(m \vee m^{\prime})\left\|\Psi_{m \vee m^{\prime} - 1}^{-1}\right\|_{\mathrm{op}}}{n}$ with $\mathfrak{c} > 0$ a constant.
\end{lemma}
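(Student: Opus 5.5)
Fix $m\in\mathcal{M}_n$ and, for each $m'\in\mathcal{M}_n$, set $m^\ast=m\vee m'$. The plan is to apply Talagrand's inequality to the centred empirical process $f\mapsto\nu_{n,1}(f)$ over the unit ball $\mathcal{B}_{m^\ast}=\{f\in\mathcal{S}_{m^\ast},\ \|f\|_\pi=1\}$. Since the observations are $\beta$-mixing and not independent, I would first use Berbee's coupling lemma. With a block length $q_n$, split $\{0,\dots,n-1\}$ into $2p_n$ blocks with $n=2p_nq_n$. Then build independent copies $(X^\ast_{k\Delta_n})$ of the even and odd blocks, together with the associated Brownian increments (so that $\xi^{(1)}$ is coupled as well). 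The coupling agrees with the original path outside an event of probability at most $2p_n\beta_X(q_n\Delta_n)\le Cp_n\exp(-\gamma q_n\Delta_n)$ by \eqref{eq:expo-beta-mixing}.

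I would choose $q_n\Delta_n = c\log n$ with $c$ large enough (say $c=4/\gamma$), so that this error, multiplied by the crude bound on $\sup\nu_{n,1}^2$, contributes $O(n^{-2})$. For that crude bound, use $\sup_{\mathcal{B}_{m^\ast}}\nu_{n,1}^2\le\|\Psi_{m^\ast}^{-1}\|_{\mathrm{op}}\sum_i\nu_{n,1}^2(\phi_i)$ as in \eqref{eq:term00}, with Cauchy--Schwarz and the moments from Proposition~\ref{prop:Exp-Holder}. Summing over the at most $m_{\max}$ values of $m'$ keeps this term at $O(1/n)$. Note that $n\Delta_n^2=\varepsilon_0\log^4 n$ gives $n\Delta_n\asymp\sqrt n\log^2 n$, so $p_n\asymp\sqrt n\log n$ blocks, which is ample.

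On the coupled independent blocks, $\nu_{n,1}^\ast(f)=\frac{1}{2}\big(\nu^{\ast,\mathrm{even}}+\nu^{\ast,\mathrm{odd}}\big)$ is a sum of $p_n$ i.i.d. block variables. Talagrand's inequality needs three quantities.
\begin{itemize}
\item[(a)] $H^2$: by \eqref{eq:kappa} and the martingale-increment orthogonality \eqref{eq:term2}, $\mathbb{E}\sup_{\mathcal{B}_{m^\ast}}(\nu^\ast)^2\le H^2:=C\,m^\ast\|\Psi_{m^\ast}^{-1}\|_{\mathrm{op}}/n$, which is the form of $Q$.
\item[(b)] $v$: the variance term, bounded with the same orthogonality by $\sup_f\mathrm{Var}(\text{block})\le Cq_n\sup_{\|f\|_\pi=1}\mathbb{E}[\xi^{(1)2}f^2]$. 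I would control this via $\|f\|_\infty^2\le\mathcal{L}(m^\ast)\|\Psi_{m^\ast}^{-1}\|_{\mathrm{op}}$ only inside an $L^{p}$–Hölder split: use $\mathbb{E}[\xi^{(1)4}]<\infty$ together with $\mathbb{E}f^4\le\|f\|_\infty^2$. This gives $v\le Cq_n\sqrt{\mathcal{L}(m^\ast)\|\Psi_{m^\ast}^{-1}\|_{\mathrm{op}}}$.
\item[(c)] $M_1$: the uniform bound $M_1$ on a block.
\end{itemize}

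The main obstacle is (c), because $\xi^{(1)}_{k\Delta_n}$ is unbounded, being a squared stochastic integral with an unbounded $\sigma$. I would truncate: write $\xi^{(1)}=\xi^{(1)}\mathds{1}_{|\xi^{(1)}|\le K_n}+\xi^{(1)}\mathds{1}_{|\xi^{(1)}|>K_n}$ with $K_n=\log^2 n$, re-centering the truncated part. The tail part is handled by Markov's inequality with high moments: $\mathbb{E}|\xi^{(1)}|^{2r}\le C\mathbb{E}\sigma^{4r}(X_0)<\infty$ for $r<d/(2D)$, available since $d>8D$ by Proposition~\ref{prop:Exp-Holder} and BDG. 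Combined with $\|\Psi^{-1}\|$ and the polynomial growth in $m$, this gives $O(n^{-1})$ after summation over $m'$. For the truncated part, $M_1\le Cq_nK_n\sqrt{\mathcal{L}(m^\ast)\|\Psi_{m^\ast}^{-1}\|_{\mathrm{op}}}$ by Proposition~\ref{prop:operator-norm-identity} and \eqref{eq:operator-norm-identity}.

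Talagrand's inequality with $\varepsilon=1$ then bounds
\[
\mathbb{E}\big(\sup(\nu^\ast)^2-4H^2\big)_+\le C\Big(\frac{v}{p_n}e^{-c\,p_nH^2/v}+\frac{M_1^2}{p_n^2}e^{-c\,p_nH/M_1}\Big).
\]
Take $\mathfrak{c}$ in $Q$ large, at least $4C$; the index shift $\Psi_{m\vee m'-1}$ is harmless since the bounds are monotone up to constants. It remains to check that $p_nH/M_1\gtrsim \sqrt{n}\,/(q_n\sqrt{n}K_n\cdot n^{\beta/2})$-type exponents grow like a power of $n$. This follows from $\beta(s)<1$ in \eqref{eq:operator-norm-identity} together with $m_{\max}=\lceil (n\Delta_n)^{2/(2s_0+1)}\rceil$ and the $\log$ factors coming from $\varepsilon_0\log^4 n$. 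With that, each exponential term is $o(n^{-2})$, and summing over $m'\in\mathcal{M}_n$ yields the claimed $C/n$.
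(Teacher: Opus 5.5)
Your scheme (Berbee coupling, truncation and recentring of $\xi^{(1)}$, Talagrand's inequality on the coupled blocks) is the one the paper uses, and your $H^2$ and $v$ are right. The gap is in the two steps that depend on the tuning $(q_n,K_n)$, and neither closes.

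\textbf{The truncation remainder.} With $K_n=\log^2 n$, Markov's inequality only gives a bound $C\,m^*\|\Psi_{m^*}^{-1}\|_{\mathrm{op}}\,\mathbb{E}|\xi^{(1)}|^{2r}/(nK_n^{2r-2})$, i.e.\ a gain of a power of $\log n$. On the other hand, $\sum_{m'\in\mathcal{M}_n}m^*\|\Psi^{-1}_{m^*}\|_{\mathrm{op}}\ge\|\phi_0\|_\infty^{-2}\sum_{m'\le m_{\max}}m'$, which grows like a positive power of $n$. Since $\sigma^2(X_0)$ has only polynomial moments under $\pi_X$, no better tail is available, so the truncation level must be a power of $n$. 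The paper truncates $\xi^{(1)}$ at $a_n/\Delta_n=n^{1/4}$ and uses an eighth moment.

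\textbf{The $M$-term of Talagrand's inequality (decisive).} In one consistent normalisation, $H^2\asymp m^*\|\Psi^{-1}_{m^*}\|_{\mathrm{op}}/n$ and $M\asymp K_n(\mathcal{L}(m^*)\|\Psi^{-1}_{m^*}\|_{\mathrm{op}})^{1/2}$. So $\|\Psi^{-1}_{m^*}\|_{\mathrm{op}}$ cancels and $p_nH/M\asymp p_n/(\sqrt n\,K_n)$; the property $\beta(s)<1$ plays no role here. Your choice $q_n\Delta_n=c\log n$ with $\Delta_n\asymp\log^2(n)/\sqrt n$ gives $p_n\asymp\sqrt n\log n$, hence an exponent of order $\log n/K_n\to 0$. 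Your own displayed quantity $\sqrt n/(q_n\sqrt n K_n n^{\beta/2})$ also tends to $0$ rather than growing. The exponential factor therefore gives no decay. The term $M^2/p_n^2$ is at least of order $K_n^2/(n\log^2 n)$ for each $m'$, so summed over $m'\in\mathcal{M}_n$ it is not $O(1/n)$, and neither exponential term is $o(n^{-2})$ as you claim.

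\textbf{Retuning does not help.} Within this scheme the three requirements are incompatible:
\begin{itemize}
\item a polynomially small coupling error needs $q_n\Delta_n\gtrsim\log n$, i.e.\ $p_n\lesssim\sqrt n\log n$;
\item a negligible $M$-term then needs $p_n/(\sqrt n K_n)\gtrsim\log n$, i.e.\ $K_n=O(1)$;
\item the truncation remainder needs $K_n\ge n^{c}$.
\end{itemize}
Adopting the paper's parameters ($p_n\asymp\sqrt n$, $a_n=n^{1/4}\Delta_n$) does not help either. The paper's growing exponent $p_n/(\sqrt n\,a_n)$ comes from the bound $\|\Gamma_{n,f}\|_\infty\le a_n\sup_f\|f\|_\infty$. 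With the factor $1/(2q_n\Delta_n)$ in $\Gamma_{n,f}$ and $|Z_k^{(1)*}|\le 2a_n$, the correct bound is $(a_n/\Delta_n)\sup_f\|f\|_\infty$, and the exponent is then again of order $n^{-1/4}$. Handling the heavy-tailed increments $\xi^{(1)}_{k\Delta_n}$ needs an ingredient beyond truncation combined with the crude bound $\sup_f\|f\|_\infty^2\le\mathcal{L}(m)\|\Psi_m^{-1}\|_{\mathrm{op}}$.

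\textbf{A smaller point.} The index shift in $Q$ is not harmless in the direction you need. Eigenvalue interlacing gives $\|\Psi^{-1}_{m-1}\|_{\mathrm{op}}\le\|\Psi^{-1}_{m}\|_{\mathrm{op}}$, so a $Q$ built with $\Psi_{m\vee m'-1}$ is smaller than your $H^2$-based threshold. You would need the reverse comparison, which is not established.
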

From the above lemma, the penalty function is derived from the function $(m,m^{\prime}) \mapsto Q(m,m^{\prime})$ and from Equations~\eqref{eq:Gram1} and \eqref{eq:Gram2} in the proof of Proposition~\ref{prop:operator-norm-identity}, we obtain $\mathrm{pen}(m) = \mathrm{pen}_1(m) = \tau_1 \frac{m^{k_0}}{n}$, where $\tau_1>0$ is a constant. Moreover, since $L = \log(n)$, we deduce from Proposition~\ref{prop:Exp-Holder} and Lemma~\ref{lm:omega-comp} that
$$m_{\max}L\mathbb{P}\left(\Omega_{\max}^c\right) \leq C(n\Delta_n)^{\frac{2}{2s_0+1}}\log(n)\exp\left(-\frac{\gamma}{4}\log^2(n)\right)$$
and $$\sqrt{\mathbb{E}\left[\sigma^4(X_0)\right]\mathbb{P}\left(\Omega_{m_{\max}}^c\right)} \leq C\exp\left(-\dfrac{\gamma}{8}\log^2(n)\right).$$
Then, from Lemma~\ref{lm:lemma-exponential} and Equation~\eqref{eq:adaptive-1} there exists a constant $C>0$ such that
$$\mathbb{E}\left[\left\|\w{\sigma}_{\w{m}}^2 - \sigma^2\right\|_n^2\right] \leq 3\underset{m \in \mathcal{M}_n}{\inf}\left\{\underset{f \in \mathcal{S}_{m,L}}{\inf}{\left\|f - \sigma^2\right\|_{\pi}^2} + \mathrm{pen}_1(m)\right\} + C\frac{\log^4(n)}{n}.$$
To conclude, from Equation~\eqref{eq:pi-norm-risk} in the proof of Corollary~\ref{cor:upper-limit-NAE}, we obtain
$$\mathbb{E}\left[\left\|\w{\sigma}_{\w{m}}^2 - \sigma^2\right\|_{\pi}^2\right] \leq 9\underset{m \in \mathcal{M}_n}{\inf}\left\{\underset{f \in \mathcal{S}_{m,L}}{\inf}{\left\|f - \sigma^2\right\|_{\pi}^2} + \mathrm{pen}_1(m)\right\} + C\frac{\log^4(n)}{n},$$
where $C>0$ is a new constant.

\paragraph{\textbf{Case of a polynomially ergodic diffusion processes.}}

We have the following lemma:
\begin{lemma}\label{lm:lemma-polynomial}
    Under Assumptions~\ref{ass:drift}, \ref{ass:diffusion}, \ref{ass:NonDegeneracy}, \ref{ass:stationary} \ref{ass:limit-case-elliptic} and \ref{ass:Regular-Sigma}, and assuming that $d > 8D$ and $n\Delta_n^2 = 1$, there exists a constant $C>0$ such that
    $$\sum_{m^{\prime} \in \mathcal{M}_n}\mathbb{E}\left[\left(\underset{f \in \mathcal{S}_{m \vee m^{\prime}}, ~ \|f\|_{\pi}^2 = 1}{\sup}{\nu_{n,1}^2(f)} - Q(m, m^{\prime})\right)_{+}\right] \leq \dfrac{C}{n}, ~~~~ m \in \mathcal{M}_n$$
    where, for all $m,m^{\prime} \in \mathcal{M}_n, ~ Q(m,m^{\prime}) = \mathfrak{c}\dfrac{(m \vee m^{\prime})\left\|\Psi_{m \vee m^{\prime} - 1}^{-1}\right\|_{\mathrm{op}}}{n}$ and $\mathfrak{c} > 0$ is a constant.
\end{lemma}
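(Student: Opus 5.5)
We need to bound $\sum_{m'}\mathbb{E}[(\sup_{f\in\mathcal{S}_{m\vee m'},\|f\|_\pi=1}\nu_{n,1}^2(f)-Q(m,m'))_+]$ in the polynomially ergodic case. The plan is to follow the usual Talagrand route for martingale-type empirical processes. The first step is to write $\nu_{n,1}(f)=n^{-1}\sum_k\xi^{(1)}_{k\Delta_n}f(X_{k\Delta_n})$ with $\xi^{(1)}_{k\Delta_n}$ a centered martingale increment given $\mathcal{F}^X_{k\Delta_n}$. Because $\sigma$ is bounded under Assumption~\ref{ass:limit-case-elliptic}, $\sigma_-^2\le\sigma^2\le\sigma_+^2$, the stochastic integral $\int_{k\Delta_n}^{(k+1)\Delta_n}\sigma(X_s)dW_s$ has Gaussian-type tails. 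I would therefore truncate and set $\xi^{(1)}_{k\Delta_n}=\xi^{(1)}_{k\Delta_n}\mathds{1}_{|\xi^{(1)}_{k\Delta_n}|\le c_n}+\xi^{(1)}_{k\Delta_n}\mathds{1}_{|\xi^{(1)}_{k\Delta_n}|> c_n}$ with $c_n$ of order $\log n$. The remainder term is controlled by Cauchy--Schwarz together with $\|f\|_\infty^2\le\mathcal{L}(m)\|\Psi_m^{-1}\|_{\mathrm{op}}$ and the exponential tail $\mathbb{P}(|\xi^{(1)}|>c_n)\le C\exp(-cc_n)$. This gives a contribution $O(n^{-2})$ per $m'$, hence $O(1/n)$ after summing over $|\mathcal{M}_n|$.

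For the bounded part, I would remove the dependence by the $\beta$-mixing coupling (Berbee's lemma) with blocks of length $q_n$. This produces independent blocks up to an error $n\beta_X(q_n\Delta_n)/q_n$ times a crude bound $\mathcal{L}(m_{\max})\|\Psi^{-1}\|c_n^2$. Under polynomial mixing, $\beta_X(t)\le C(1+t)^{-(k+1)}$ with $k<r_0-3/2$ large since $r_0\ge16$. Choosing $q_n\Delta_n=n^{\epsilon}$ for a small $\epsilon$ makes this error $O(n^{-2})$, because $k$ can be taken above $15$ and $m_{\max}$ is a small power of $n$ (via Proposition~\ref{prop:operator-norm-identity}, $m_{\max}^{d_0}\le (n\Delta_n)^{2d_0/(2s_0+1)}$). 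Alternatively, and more cleanly, the martingale structure allows Bernstein's inequality for martingales on each fixed $f$ combined with a chaining/covering argument on the unit ball of $\mathcal{S}_{m\vee m'}$. I would still phrase the argument through Talagrand as the paper announces.

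Next I apply Talagrand's inequality to the blocked process. The three standard quantities are needed. First, $H^2$ bounds $\mathbb{E}[\sup\nu^2]$, which by \eqref{eq:term00}--\eqref{eq:term1} is $\le C(m\vee m')\|\Psi_{m\vee m'}^{-1}\|_{\mathrm{op}}/n$. Second, the sup-norm bound is $M_1=c_n\sqrt{\mathcal{L}(m\vee m')\|\Psi^{-1}\|_{\mathrm{op}}}$. Third, the variance is $v=\sup_f\mathrm{Var}\le C\sigma_+^4$, using $\|f\|_\pi=1$ and the conditional variance $\mathbb{E}[(\xi^{(1)})^2|\mathcal{F}]\le C\sigma_+^4$. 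Talagrand then yields
\begin{equation*}
\mathbb{E}\big[(\sup\nu_{n,1}^2-6H^2)_+\big]\le C\Big(\frac{v}{n}e^{-c\,nH^2/v}+\frac{M_1^2q_n^2}{n^2}e^{-c\,\sqrt{n}H/(q_nM_1)}\Big).
\end{equation*}
Setting $Q(m,m')=\mathfrak{c}(m\vee m')\|\Psi^{-1}_{m\vee m'}\|_{\mathrm{op}}/n$ with $\mathfrak{c}\ge 6C$, the first exponential is $e^{-c(m\vee m')\|\Psi^{-1}\|}$, which is summable in $m'$ since $\|\Psi^{-1}\|\ge1$ grows. The sum of the first terms is therefore $O(1/n)$.

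The main obstacle is the second term. It involves $M_1^2q_n^2/n^2$ with $M_1^2\lesssim c_n^2 m^{d_0}$, and the exponent $\sqrt{n}H/(q_nM_1)\approx \sqrt{n}\sqrt{m/n}\cdot\ldots/(q_n c_n\sqrt{m})$ does not blow up. I would handle this by choosing $q_n$ as small as mixing permits, namely a small power of $n$. Then $M_1^2q_n^2/n^2\le m_{\max}^{d_0}n^{2\epsilon}\log^2 n/n^2$. Since $m_{\max}^{d_0}\le (n\Delta_n)^{2d_0/(2s_0+1)}\le n^{1/2}$ (as $s_0=2d+3$), summing over at most $m_{\max}$ indices gives $o(1/n)$ for $\epsilon$ small. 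This is where the requirements that $r_0$ be large and $n\Delta_n^2=1$ enter. Combining the truncation, the coupling, and the Talagrand estimates yields the claimed $C/n$.
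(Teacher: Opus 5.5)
Your route is the paper's: truncate $\xi^{(1)}_{k\Delta_n}$, use Berbee coupling into $p_n$ blocks of $q_n$ observations, and apply Talagrand to the blocks. You add two genuine improvements. First, under $\sigma^2\le\sigma_+^2$ you use the sub-exponential tails of $\xi^{(1)}_{k\Delta_n}$, truncating at $c_n\asymp\log n$ instead of $a_n=n^{1/4}\Delta_n$ with an eighth-moment Markov bound. Second, martingale orthogonality inside a block gives a block variance of order $\sigma_+^4/q_n$, sharper than the paper's $v$.

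The gap is in the step you yourself flag as the main obstacle: you use $q_n$ both as the time length of a block and as its number of observations. The coupling needs $q_n\Delta_n=n^{\epsilon}\to\infty$. Since $n\Delta_n^2=1$ means $\Delta_n=n^{-1/2}$, this forces $q_n=n^{1/2+\epsilon}$ and $p_n\asymp n^{1/2-\epsilon}$, not $q_n=n^{\epsilon}$. Two consequences follow:
$M_1^2q_n^2/n^2\asymp c_n^2n^{2\epsilon}\mathcal{L}(m\vee m')\|\Psi^{-1}_{m\vee m'}\|_{\mathrm{op}}/n$, which is the penalty order inflated by $c_n^2n^{2\epsilon}$;
and $p_nH/M_1\asymp n^{-\epsilon}c_n^{-1}\sqrt{(m\vee m')/\mathcal{L}(m\vee m')}\to0$ on $\mathcal{M}_n$.
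So the exponential factor is of order one, and the sum over $m'$ is far larger than $C/n$.

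No other blocking rescues this. Any admissible one has $q_n\Delta_n\to\infty$, hence $p_n\le\sqrt{n}/2$. Then $M_1^2/p_n^2\ge4c_n^2\mathcal{L}(m\vee m')\|\Psi^{-1}_{m\vee m'}\|_{\mathrm{op}}/n$, while $p_nH/M_1\le\sqrt{\mathfrak{c}(m\vee m')/\mathcal{L}(m\vee m')}/(2c_n)$ stays bounded for bounded $m\vee m'$. With a sup-norm bound on the block sums, the second Talagrand term is never negligible in this high-frequency regime.

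The paper's own proof does not supply the missing step either. Its factor $\exp(-cn^{1/4-5/(2(r_0-1))})$ comes from $M=Ca_n\sqrt{(m\vee m')\|\Psi^{-1}_{m\vee m'}\|_{\mathrm{op}}}$, which drops the normalisation $1/(2q_n\Delta_n)$ in $\Gamma_{n,f}$. Since $|Z^{(1)*}_k|\le 2a_n$, the correct bound is $a_n\Delta_n^{-1}\|f\|_\infty=n^{1/4}\|f\|_\infty$, and with it $p_nH/M\to0$ as well.

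What does close the argument is the alternative you mention only in passing, sketched here:
\begin{enumerate}
\item The $\xi^{(1)}_{k\Delta_n}$ are martingale increments with $\mathbb{E}[|\xi^{(1)}_{k\Delta_n}|^p\mid\mathcal{F}^X_{k\Delta_n}]\le p!\,C^p$, by BDG and $\sigma^2\le\sigma_+^2$.
\item Bernstein's inequality for martingales then gives, for fixed $f$, $\mathbb{P}(\nu_{n,1}(f)\ge x,\ \|f\|_n^2\le 3/2)\le\exp(-nx^2/(C(1+\|f\|_\infty x)))$.
\item The lemma is only needed on $\Omega_{m_{\max}}$, where $\|f\|_n^2\le 3/2$ holds for all $f\in\mathcal{S}_{m_{\max}}$ with $\|f\|_\pi=1$.
\item Writing $f=\sum_i(\Psi^{-1/2}_{m\vee m'}\mathbf{u})_i\phi_i$, a $1/2$-net of the unit sphere of $\mathbb{R}^{m\vee m'}$ costs a factor $5^{m\vee m'}$. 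This is absorbed by $nQ(m,m')$ once $\mathfrak{c}$ is large, since $\|\Psi_m^{-1}\|_{\mathrm{op}}\ge 1/\max_j\|\phi_j\|_\infty^2$.
\item Finally, $\mathcal{L}(m_{\max})\|\Psi^{-1}_{m_{\max}}\|_{\mathrm{op}}=o(\sqrt{n})$ makes the linear part of the Bernstein exponent harmless.
\end{enumerate}
No coupling, hence no $p_n$, enters at all.
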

Under Assumption~\ref{ass:limit-case-elliptic} and from Equations~\eqref{eq:Gram1.2} and \eqref{eq:Gram2-2} in the proof of Proposition~\ref{prop:operator-norm-identity}, the penalty function is derived from $Q(m,m^{\prime})$ by $\mathrm{pen}(m) = \mathrm{pen}_2(m) = \tau_2\frac{m^{d_0}}{n}$ where $\tau_2 > 0$ is a constant. In addition, from Lemma~\ref{lm:omega-comp} and the assumptions therein, we have $\sqrt{\mathbb{E}[X_0^4]\mathbb{P}\left(\Omega_{m_{\max}}^c\right)} = \mathcal{O}(n^{-1})$ and since $s_0 > 2$,
$$ m_{\max} L \mathbb{P}\left(\Omega_{\max}^c\right) \leq C\dfrac{(n\Delta_n)^{2/(2s_0+1)}\log(n)}{n^{2}} = \dfrac{C\log(n)}{n^{(4s_0+1)/(2s_0+1)}} = o\left(\dfrac{\log(n)}{n^{3/2}}\right) ~~ \mathrm{as} ~~ n \rightarrow \infty.$$
Then, from Lemma~\ref{lm:lemma-polynomial} and Equation~\eqref{eq:adaptive-1} there exists a constant $C>0$ such that
$$\mathbb{E}\left[\left\|\w{\sigma}_{\w{m}}^2 - \sigma^2\right\|_n^2\right] \leq 3\underset{m \in \mathcal{M}_n}{\inf}\left\{\underset{f \in \mathcal{S}_{m,L}}{\inf}{\left\|f - \sigma^2\right\|_{\pi}^2} + \mathrm{pen}_2(m)\right\} + Cn^{-1}.$$
Finally, from Equation~\eqref{eq:pi-norm-risk} in the proof of Corollary~\ref{cor:upper-limit-NAE}, there exists a constant $C>0$ such that
$$\mathbb{E}\left[\left\|\w{\sigma}_{\w{m}}^2 - \sigma^2\right\|_{\pi}^2\right] \leq 9\underset{m \in \mathcal{M}_n}{\inf}\left\{\underset{f \in \mathcal{S}_{m,L}}{\inf}{\left\|f - \sigma^2\right\|_{\pi}^2} + \mathrm{pen}_2(m)\right\} + Cn^{-1}.$$
\end{proof}

\begin{proof}[\textbf{Proof of Lemma~\ref{lm:lemma-exponential}}]
Recall that for all $f \in \mathcal{S}_{m \vee m^{\prime}}$ with $m,m^{\prime} \in \mathcal{M}_n$, we have 
$$\nu_{n,1}(f) = \dfrac{1}{n}\sum_{k=0}^{n-1}\xi_{k\Delta_n}^{(1)}f(X_{k\Delta_n}),$$ where 
$$\xi_{k\Delta_n}^{(1)} = \dfrac{1}{\Delta_n}\left[\left(\int_{k\Delta_n}^{(k+1)\Delta_n}\sigma(X_s)dW_s\right)^2 - \int_{k\Delta_n}^{(k+1)\Delta_n}\sigma^2(X_s)ds\right].$$
Consider a sequence $(a_n)$ of real numbers to be chosen later. For each $k \in \{0, \ldots, n-1\}$, set
\begin{equation}\label{eq:Z-1-2}
    \begin{aligned}
        Z_{k}^{(1)} := &~ \Delta_n\xi_{k\Delta_n}^{(1)}\mathds{1}_{\left|\Delta_n\xi_{k\Delta_n}^{(1)}\right| \leq a_n} - \mathbb{E}\left[\Delta_n\xi_{k\Delta_n}^{(1)}\mathds{1}_{\left|\Delta_n\xi_{k\Delta_n}^{(1)}\right| \leq a_n} \biggm\vert \mathcal{F}_{k\Delta_n}^{X}\right]\\
        Z_{k}^{(2)} := &~ \Delta_n\xi_{k\Delta_n}^{(1)} - Z_k^{(1)} = \Delta_n\xi_{k\Delta_n}^{(1)}\mathds{1}_{\left|\Delta_n\xi_{k\Delta_n}^{(1)}\right| > a_n} - \mathbb{E}\left[\Delta_n\xi_{k\Delta_n}^{(1)}\mathds{1}_{\left|\Delta_n\xi_{k\Delta_n}^{(1)}\right| > a_n}  \biggm\vert \mathcal{F}_{k\Delta_n}^{X}\right].
    \end{aligned}
\end{equation}
Then, for all $f \in \mathcal{S}_m$, we obtain $\nu_{n,1}(f) = \Lambda_{n,1}(f) + \Lambda_{n,2}(f)$, where
\begin{equation}\label{eq:Lambda-1-2}
    \Lambda_{n,1}(f) := \dfrac{1}{n\Delta_n}\sum_{k=0}^{n-1}Z_{k}^{(1)}f(X_{k\Delta_n}) ~~ \mathrm{and} ~~ \Lambda_{n,2}(f) = \dfrac{1}{n\Delta_n}\sum_{k=0}^{n-1}Z_{k}^{(2)}f(X_{k\Delta_n}).
\end{equation}
we deduce that
\begin{equation}\label{eq:adaptive2-2}
    \begin{aligned}
        \mathbb{E}\left[\left(\underset{f \in \mathcal{S}_{m \vee m^{\prime}}, ~ \|f\|_{\pi}^2 = 1}{\sup}{\nu_{n,1}^2(f)} - Q(m, m^{\prime})\right)_{+}\right] \leq &~ 2\mathbb{E}\left[\left(\underset{f \in \mathcal{S}_{m \vee m^{\prime}}, ~ \|f\|_{\pi}^2 = 1}{\sup}{\Lambda_{n,1}^2(f)} - \dfrac{1}{2}Q(m, m^{\prime})\right)_{+}\right]\\
        & + 2\mathbb{E}\left[\underset{f \in \mathcal{S}_{m \vee m^{\prime}}, ~ \|f\|_{\pi}^2 = 1}{\sup}{\Lambda_{n,2}^2(f)}\right].
    \end{aligned}
\end{equation}
We start with the second term on the right-hand side of Equation~\eqref{eq:adaptive2-2}. Recall that for all $f = \sum_{i=0}^{m \vee m^{\prime} - 1}a_i\phi_i \in \mathcal{S}_{m \vee m^{\prime}}$ such that $\|f\|_{\pi}^2 = 1$, we have $\sum_{i=0}^{m \vee m^{\prime}-1}a_i^2 \leq \left\|\Psi_{m \vee m^{\prime}}^{-1}\right\|_{\mathrm{op}}$. We deduce that
\begin{equation*}
    \begin{aligned}
        \Lambda_{n,2}^2(f) = \left(\sum_{i=0}^{m\vee m^{\prime}-1}a_i\Lambda_{n,2}(\phi_i)\right)^2 \leq \sum_{i = 0}^{m \vee m^{\prime} - 1}a_i^2\sum_{i=0}^{m \vee m^{\prime} - 1}\Lambda_{n,2}^2(\phi_i) \leq \left\|\Psi_{m \vee m^{\prime}}^{-1}\right\|_{\mathrm{op}}\sum_{i=0}^{m \vee m^{\prime} - 1}\Lambda_{n,2}^2(\phi_i).
    \end{aligned}
\end{equation*}
It follows that
\begin{multline*}
     \mathbb{E}\left[\underset{f \in \mathcal{S}_{m \vee m^{\prime}}, ~ \|f\|_{\pi}^2 = 1}{\sup}{\Lambda_{n,2}^2(f)}\right] \leq \left\|\Psi_{m \vee m^{\prime}}^{-1}\right\|_{\mathrm{op}}\sum_{i=0}^{m \vee m^{\prime} - 1}\mathbb{E}\left[\Lambda_{n,2}^2(\phi_i)\right]\\
     = \dfrac{\left\|\Psi_{m \vee m^{\prime}}^{-1}\right\|_{\mathrm{op}}}{(n\Delta_n)^2}\sum_{i=0}^{m \vee m^{\prime} - 1}\sum_{k=0}^{n-1}\mathbb{E}\left[\left(Z_k^{(2)}\right)^2\phi_i^{2}(X_{k\Delta_n})\right]\\
     + \dfrac{2\left\|\Psi_{m \vee m^{\prime}}^{-1}\right\|_{\mathrm{op}}}{(n\Delta_n)^2}\sum_{i=0}^{m \vee m^{\prime} - 1}\sum_{k > k^{\prime}}\mathbb{E}\left[Z_k^{(2)}\phi_i(X_{k\Delta_n})Z_{k^{\prime}}^{(2)}\phi_i(X_{k^{\prime}\Delta_n})\right].
\end{multline*}
For all $k,k^{\prime} \in \{0, \ldots, n-1\}$ such that $k \neq k^{\prime}$, we have
\begin{equation*}
    \begin{aligned}
        \mathbb{E}\left[Z_k^{(2)}\phi_i(X_{k\Delta_n})Z_{k^{\prime}}^{(2)}\phi_i(X_{k^{\prime}\Delta_n}) \biggm\vert \mathcal{F}_{k\Delta_n}^{X}\right] = \phi_i(X_{(k \land k^{\prime})\Delta_n})Z_{(k \land k^{\prime})}^{(2)}\phi_i(X_{(k \vee k^{\prime})\Delta_n})\mathbb{E}\left[Z_{k \vee k^{\prime}}^{(2)}\biggm\vert \mathcal{F}_{(k \vee k^{\prime})\Delta_n}^{X}\right] = 0.
    \end{aligned}
\end{equation*}
We deduce that
\begin{equation*}
    \begin{aligned}
        \mathbb{E}\left[\underset{f \in \mathcal{S}_{m \vee m^{\prime}}, ~ \|f\|_{\pi}^2 = 1}{\sup}{\Lambda_{n,2}^2(f)}\right] \leq &~ \dfrac{\left\|\Psi_{m \vee m^{\prime}}^{-1}\right\|_{\mathrm{op}}}{(n\Delta_n)^2}\sum_{i=0}^{m \vee m^{\prime} - 1}\left\|\phi_i\right\|_{\infty}^2\sum_{k=0}^{n-1}\mathbb{E}\left[\left(Z_k^{(2)}\right)^2\right]\\
        = &~ \dfrac{\left\|\Psi_{m \vee m^{\prime}}^{-1}\right\|_{\mathrm{op}}}{(n\Delta_n)^2}\sum_{i=0}^{m \vee m^{\prime} - 1}\left\|\phi_i\right\|_{\infty}^2\sum_{k=0}^{n-1}\mathbb{E}\left[\mathbb{E}\left\{\left|\Delta_n\xi_{k\Delta_n}^{(1)}\right|^2\mathds{1}_{\left|\Delta_n\xi_{k\Delta_n}^{(1)}\right| > a_n} \biggm\vert \mathcal{F}_{k\Delta_n}^{X}\right\}\right]\\
        = &~ \dfrac{\left\|\Psi_{m \vee m^{\prime}}^{-1}\right\|_{\mathrm{op}}}{(n\Delta_n)^2}\sum_{i=0}^{m \vee m^{\prime} - 1}\left\|\phi_i\right\|_{\infty}^2\sum_{k=0}^{n-1}\mathbb{E}\left[\left|\Delta_n\xi_{k\Delta_n}^{(1)}\right|^2\mathds{1}_{\left|\Delta_n\xi_{k\Delta_n}^{(1)}\right| > a_n}\right].
    \end{aligned}
\end{equation*}
Using Cauchy-Schwarz's inequality, we obtain
\begin{equation*}
    \begin{aligned}
        \mathbb{E}\left[\underset{f \in \mathcal{S}_{m \vee m^{\prime}}, ~ \|f\|_{\pi}^2 = 1}{\sup}{\Lambda_{n,2}^2(f)}\right] \leq &~ \dfrac{\left\|\Psi_{m \vee m^{\prime}}^{-1}\right\|_{\mathrm{op}}}{(n\Delta_n)^2}\sum_{i=0}^{m \vee m^{\prime} - 1}\left\|\phi_i\right\|_{\infty}^2\sum_{k=0}^{n-1}\left(\mathbb{E}\left[\left|\Delta_n\xi_{k\Delta_n}^{(1)}\right|^4\right]\right)^{1/2}\sqrt{\mathbb{P}\left(\left|\Delta_n\xi_{k\Delta_n}^{(1)}\right| > a_n\right)}.
    \end{aligned}
\end{equation*}
On the one hand, using Burkholder-Davis-Gundy's inequality combined with H\"older's inequality, for all $q \geq 1$, there exists a constant $C_q>0$ such that
$$\mathbb{E}\left[\left|\Delta_n\xi_{k\Delta_n}^{(1)}\right|^q\right] \leq C_q\Delta_n^{q-1}\int_{k\Delta_n}^{(k+1)\Delta_n}\mathbb{E}\left[\sigma^{2q}(X_s)\right]ds \leq C_q\Delta_n^q\mathbb{E}\left[\sigma^{2q}(X_0)\right].$$
On the other hand, using Markov's inequality, for all $q \geq 1$, there exists a constant $C_p>0$ depending on $p$ such that 
$$ \mathbb{P}\left(\left|\Delta_n\xi_{k\Delta_n}^{(1)}\right| > a_n\right) \leq \dfrac{1}{a_n^q}\mathbb{E}\left[\left|\Delta_n\xi_{k\Delta_n}^{(1)}\right|^q\right] \leq C_p\dfrac{\Delta_n^q}{a_n^q}\mathbb{E}\left[\sigma^{2q}(X_0)\right].$$
Then, there exists a constant $C>0$ such that for all $m, m^{\prime} \in \mathcal{M}_n$,
$$\mathbb{E}\left[\underset{f \in \mathcal{S}_{m \vee m^{\prime}}, ~ \|f\|_{\pi}^2 = 1}{\sup}{\Lambda_{n,2}^2(f)}\right]  \leq \dfrac{C\Delta_n^q}{na_n^q}\left\|\Psi_{m \vee m^{\prime}}^{-1}\right\|_{\mathrm{op}}\sqrt{\mathbb{E}\left[\sigma^8(X_0)\right]\mathbb{E}\left[\sigma^{2q}(X_0)\right]}\sum_{i=0}^{m \vee m^{\prime} - 1}\left\|\phi_i\right\|_{\infty}^2.$$
For $q = 8$ and by Proposition~\ref{prop:Exp-Holder} with $p = 8 < d/D$, there exists a constant $C>0$such that
\begin{equation}\label{eq:adaptive2-3}
    \begin{aligned}
        \mathbb{E}\left[\underset{f \in \mathcal{S}_{m \vee m^{\prime}}, ~ \|f\|_{\pi}^2 = 1}{\sup}{\Lambda_{n,2}^2(f)}\right]  \leq \dfrac{C\Delta_n^8}{na_n^8}(m \vee m^{\prime})\left\|\Psi_{m \vee m^{\prime}}^{-1}\right\|_{\mathrm{op}}.
    \end{aligned}
\end{equation}
For the first term on the right-hand side of Equation~\eqref{eq:adaptive2-2}, we apply the Talagrand inequality (see \cite{comte2021drift}, \textit{Theorem A.2}). For this purpose, we assume for simplicity that $n = 2p_nq_n$, where $p_n \geq 1$ and $q_n \geq 1$ are integers such that $p_n, q_n \rightarrow \infty$ as $n \rightarrow \infty$. Then, using Berbee's coupling lemma and the method used in \cite{comte2021drift} and references therein, there exist random couples $\left(X_{k\Delta_n}^*, \xi_{k\Delta_n}^{(1)*}\right), ~ k = 0, \ldots, n-1$ such that:
\begin{itemize}
    \item [(i)] For each $\delta \in \{0,1\}$ and for $i = 0, \ldots, p_n - 1$, the random vectors $\vec{Y}_{i,\delta} = \left(\vec{X}_{i,\delta}, \vec{\xi}_{i,\delta}^{(1)}\right)$ and $\vec{Y}_{i,\delta}^* = \left(\vec{X}_{i,\delta}^*, \vec{\xi}_{i,\delta}^{(1)*}\right)$ have the same probability distribution, where 
    \begin{equation*}
        \begin{aligned}
            \vec{X}_{i,\delta} := &~ \left(X_{(2i + \delta)q_n\Delta_n}, \ldots, X_{[(2i + \delta)q_n +q_n-1]\Delta_n}\right)^{\prime}, ~~ \vec{X}_{i,\delta}^* := \left(X_{(2i + \delta)q_n\Delta_n}^*, \ldots, X_{[(2i + \delta)q_n + q_n - 1]\Delta_n}^*\right)^{\prime}\\
            \vec{\xi}_{i,\delta}^{(1)} = &~ \left(\xi_{(2i + \delta)q_n\Delta_n}^{(1)}, \ldots, \xi_{[(2i + \delta)q_n + q_n - 1]\Delta_n}^{(1)}\right)^{\prime}, ~~ \vec{\xi}_{i,\delta}^{(1)*} = \left(\xi_{(2i + \delta)q_n\Delta_n}^{(1)*}, \ldots, \xi_{[(2i + \delta)q_n + q_n - 1]\Delta_n}^{(1)*}\right)^{\prime}.
        \end{aligned}
    \end{equation*}
    \item [(ii)] For each $\delta \in \{0,1\}$ and for $i = 0, \ldots, p_n - 1, ~ \mathbb{P}\left(\vec{Y}_{i,\delta} \neq \vec{Y}_{i,\delta}^*\right) \leq \beta_X(q_n\Delta_n) \leq \exp(-\gamma q_n\Delta_n)$.
    \item [(iii)] For each $\delta \in \{0,1\}$, the random vectors $\vec{Y}_{0,\delta}^*, \ldots, \vec{Y}_{p_n-1,\delta}^*$ are independent.
\end{itemize}
Set $\Omega^* := \left\{\left(X_{k\Delta_n}, \xi_{k\Delta_n}^{(1)}\right) = \left(X_{k\Delta_n}^*, \xi_{k\Delta_n}^{(1)*}\right), ~~ k = 0, \ldots, n-1\right\}$. We obtain $\mathbb{P}\left((\Omega^{*})^c\right) \leq 2p_n\beta_X(q_n\Delta_n)$. Then, from observations $\left\{\vec{Y}_{i,\delta}^*, ~ i= 0, \ldots, p_n - 1, ~ \delta = 0, 1\right\}$, we define $\Lambda_{n,1}^*(f) := \sum_{\delta \in \{0,1\}}\Lambda_{n,1}^{*,\delta}(f)$, where for each $\delta \in \{0,1\}$ and for all $f \in \mathcal{F}$, 
$\Lambda_{n,1}^{*,\delta}(f) = (1/p_n)\sum_{i=0}^{p_n-1}\Gamma_{n,f}(\vec{Y}_{i,\delta}^*)$, where
$$\Gamma_{n,f}(\vec{Y}_{i,\delta}^*) = \dfrac{1}{2q_n\Delta_n}\sum_{j=0}^{q_n-1}Z_{(2i+\delta)q_n + j}^{(1)*}f(X_{[(2i+\delta)q_n + j]\Delta_n}^*),$$
with $Z_k^{(1)*} = \Delta_n\xi_{k\Delta}^{(1)*}\mathds{1}_{\left|\Delta_n\xi_{k\Delta_n}^{(1)*}\right| \leq a_n} - \mathbb{E}\left[\Delta_n\xi_{k\Delta}^{(1)*}\mathds{1}_{\left|\Delta_n\xi_{k\Delta_n}^{(1)*}\right| \leq a_n}\right]$ for each $k \in \{0, \ldots, n-1\}$. Moreover, since $\mathbb{E}[Z_{(2i+\delta)q_n + j}^{(1)*}] = 0$, we deduce that for any $i \in \{0, \ldots, p_n-1\}$, $\mathbb{E}[\Gamma_{n,f}(\vec{Y}_{i,\delta}^*)] = 0$.\\
We now verify the conditions for applying Talagrand's inequality. Fix $m,m^{\prime} \in \mathcal{M}_n$ and set $$\mathcal{F} = \left\{f \in \mathcal{S}_{m \vee m^{\prime}}, ~~ \|f\|_{\pi}^2 = 1\right\}.$$ 
First, there exists a constant $C>0$ such that 
$$\underset{f \in \mathcal{F}}{\sup}{\left\|\Gamma_{n,f}\right\|_{\infty}} \leq a_n\underset{f \in \mathcal{F}}{\sup}\left\|f\right\|_{\infty} \leq Ca_n\left((m \vee m^{\prime})\left\|\Psi_{m \vee m^{\prime}}^{-1}\right\|_{\mathrm{op}}\right)^{1/2} := M.$$
Second, following the proof of Theorem~\ref{thm:upper-limit-NAE}, for all $f \in \mathcal{F}$ and $\delta \in \{0,1\}$,
\begin{equation*}
    \begin{aligned}
        \left(\Lambda_{n,1}^{*,\delta}\right)^2(f) = &~ \dfrac{1}{4p_n^2q_n^2\Delta_n^2}\left(\sum_{\ell = 0}^{m \vee m^{\prime} - 1}a_{\ell}\sum_{i=0}^{p_n-1}\sum_{j=0}^{q_n-1}Z_{(2i+\delta)q_n+j}^{(1)*}\phi_{\ell}(X_{[(2i+\delta)q_n + j]\Delta_n})\right)^2\\
        \leq &~ \dfrac{\left\|\Psi_{m \vee m^{\prime}}^{-1}\right\|_{\mathrm{op}}}{4p_n^2q_n^2\Delta_n^2}\sum_{\ell = 0}^{m \vee m^{\prime} - 1}\left(\sum_{i=0}^{p_n-1}\sum_{j=0}^{q_n-1}Z_{(2i+\delta)q_n+j}^{(1)*}\phi_{\ell}(X_{[(2i+\delta)q_n + j]\Delta_n})\right)^2.
    \end{aligned}
\end{equation*}
It follows, by Proposition~\ref{prop:Exp-Holder} with $p=2<d/D$, that
\begin{equation}\label{eq:adaptive-5}
    \begin{aligned}
        \mathbb{E}\left[\underset{f \in \mathcal{F}}{\sup}{\left(\Lambda_{n,1}^{*,\delta}\right)^2(f)}\right] \leq &~ \dfrac{\left\|\Psi_{m \vee m^{\prime}}^{-1}\right\|_{\mathrm{op}}}{4p_n^2q_n^2\Delta_n^2}\sum_{\ell = 0}^{m \vee m^{\prime} - 1}\sum_{i=0}^{p_n-1}\sum_{j=0}^{q_n-1}\mathbb{E}\left[\left(Z_{(2i+\delta)q_n+j}^{(1)*}\right)^2\phi_{\ell}^2(X_{[(2i+\delta)q_n + j]\Delta_n})\right]\\
        \leq &~ \dfrac{\left\|\Psi_{m \vee m^{\prime}}^{-1}\right\|_{\mathrm{op}}}{4p_n^2q_n^2\Delta_n^2}\sum_{\ell = 0}^{m \vee m^{\prime} - 1}\left\|\phi_i\right\|_{\infty}^2\sum_{i=0}^{p_n-1}\sum_{j=0}^{q_n-1}\mathbb{E}\left[\left|\Delta_n\xi_{[(2i+\delta)q_n+j]\Delta_n}\right|^2\right]\\
         \leq &~ C\mathbb{E}\left[\sigma^4(X_{0})\right]\dfrac{(m \vee m^{\prime})\left\|\Psi_{m \vee m^{\prime} - 1}^{-1}\right\|_{\mathrm{op}}}{n} \leq \mathfrak{c}\dfrac{(m \vee m^{\prime})\left\|\Psi_{m \vee m^{\prime} - 1}^{-1}\right\|_{\mathrm{op}}}{n}:= H^2, 
    \end{aligned}
\end{equation}
where $\mathfrak{c}>0$ is a constant depending on $\sigma$. Third, there exists a constant $C>0$ such that
\begin{equation*}
    \begin{aligned}
        \underset{f \in \mathcal{F}}{\sup}\dfrac{1}{p_n}\sum_{i=0}^{p_n-1}\mathrm{Var}\left[\Gamma_{n,f}(\vec{Y}_{i,\delta}^*)\right] = &~ \underset{f \in \mathcal{F}}{\sup}\dfrac{1}{4p_nq_n^2\Delta_n^2}\sum_{i=0}^{p_n-1}\sum_{j=0}^{q_n-1}\mathbb{E}\left[\left(Z_{(2i+\delta)q_n+j}^{(1)*}\right)^2f^2(X_{[(2i+\delta)q_n+j]\Delta_n}^*)\right]\\
        \leq &~ C\left(\mathbb{E}\left[\sigma^8(X_0)\right]\right)^{1/2}\underset{f \in \mathcal{F}}{\sup}\left\{\left(\mathbb{E}\left[f^2(X_0)\right]\right)^{1/2}\dfrac{\|f\|_{\infty}}{2q_n}\right\}\\
        \leq &~ C\dfrac{\sqrt{(m \vee m^{\prime})\left\|\Psi_{m \vee m^{\prime}}^{-1}\right\|_{\mathrm{op}}}}{q_n} := v,
    \end{aligned}
\end{equation*}
where $C > 0$ is a new constant. Thus, applying the Talaprand inequality (see Theorem A.2 in \cite{comte2021drift}), for all $r > 0$ with $C(r) = \left(\sqrt{1+r} - 1\right) \wedge 1$ and $b = 1/6$,
\begin{equation}\label{eq:talagrand1}
    \begin{aligned}
        \mathbb{E}\left[\left(\left\{\underset{f \in \mathcal{F}}{\sup}{\left|\Lambda_{n,1}^{*,\delta}(f)\right|^2} - 2(1+2r)H^2\right\}\mathds{1}_{\Omega^*}\right)_{+}\right] \leq &~ \dfrac{4}{b}\left(\dfrac{v}{p_n}\mathrm{e}^{-br \frac{p_nH^2}{v}} + \dfrac{49M^2}{bC^{2}(r)p_n^2}\mathrm{e}^{-\frac{\sqrt{2}b\mathfrak{C}(r)\sqrt{r}}{7}\frac{p_nH}{M}}\right).
    \end{aligned}
\end{equation}
Since $(m \vee m^{\prime})\left\|\Psi_{m \vee m^{\prime}}^{-1}\right\|_{\mathrm{op}} \rightarrow \infty$ as $m,m^{\prime}\rightarrow \infty$, there exists $m_0 \geq 1$ such that for all $m \vee m^{\prime} \geq m_0$,
\begin{equation}\label{eq:talagrand2}
    \begin{aligned}
        \dfrac{v}{p_n}\mathrm{e}^{-br\frac{p_nH^2}{v}} = &~ \mathcal{O}\left(\dfrac{\sqrt{(m \vee m^{\prime})\left\|\Psi_{m \vee m^{\prime}}^{-1}\right\|_{\mathrm{op}}}}{n}\exp\left(-br\sqrt{(m \vee m^{\prime})\left\|\Psi_{m \vee m^{\prime}}^{-1}\right\|_{\mathrm{op}}}\right)\right)\\
        = &~ \mathcal{O}\left(\dfrac{1}{n}\exp\left(-\dfrac{br}{2}\sqrt{(m \vee m^{\prime})\left\|\Psi_{m \vee m^{\prime}}^{-1}\right\|_{\mathrm{op}}}\right)\right) = \mathcal{O}\left(\dfrac{1}{n}\exp\left(-\dfrac{br}{2}\sqrt{m^{\prime}}\right)\right).
    \end{aligned}
\end{equation}
On the other hand, for $p_n = n/(2q_n) = \lfloor \sqrt{n} \rfloor$ and $a_n = n^{1/4}\Delta_n = \varepsilon_0\log^2(n)n^{-1/4}$, we have the following:
\begin{equation}\label{eq:talagrand3}
    \begin{aligned}
        \dfrac{49M^2}{bC^2(r)p_n^2}\mathrm{e}^{-\frac{\sqrt{2}bC(r)\sqrt{r}}{7}\frac{p_nH}{M}} = &~ \mathcal{O}\left(\dfrac{(m \vee m^{\prime})\left\|\Psi_{m \vee m^{\prime}}^{-1}\right\|_{\mathrm{op}}\log^2(n)}{n^{3/2}}\exp\left(-c\dfrac{p_n}{\sqrt{n}a_n}\right)\right)\\
        = &~ \mathcal{O}\left(\dfrac{\log^4(n)}{n^{3/2}}\exp\left(-c\dfrac{n^{1/4}}{\log^2(n)}\right)\right),
    \end{aligned}
\end{equation}
where $c>0$ is a constant depending on $r$ and  $\varepsilon_0$. We obtain from Equations~\eqref{eq:talagrand3}, \eqref{eq:talagrand2} and \eqref{eq:talagrand1} that 
$$\mathbb{E}\left[\left(\left\{\underset{f \in \mathcal{F}}{\sup}{\left|\Lambda_{n,1}^{*,\delta}(f)\right|^2} - 2(1+2r)H^2\right\}\mathds{1}_{\Omega^*}\right)_{+}\right] \leq C\left[\dfrac{1}{n}\exp\left(-\dfrac{br}{2}\sqrt{m^{\prime}}\right) + \dfrac{\log^4(n)}{n^{3/2}}\exp\left(-c\dfrac{n^{1/4}}{\log^2(n)}\right)\right].$$
For $n\Delta_n^2 = \varepsilon_0\log^{4}(n)$, we obtain $\mathbb{P}(\Omega^{*c}) \leq 2p_n\beta_X(q_n\Delta_n) = \mathcal{O}\left(n^{1/2}\exp\left(-0.5\varepsilon_0\gamma\log^2(n)\right)\right)$, and we obtain
\begin{equation*}
    \begin{aligned}
       &\sum_{m^{\prime} \in \mathcal{M}_n}\mathbb{E}\left[\left(\underset{f \in \mathcal{S}_{m \vee m^{\prime}}, ~ \|f\|_{\pi}^2 = 1}{\sup}{\Lambda_{n,1}^2(f)} - 2(1+2r)H^2\right)_{+}\right]\\
       & = \mathcal{O}\left(\dfrac{1}{n}\sum_{m^{\prime} \in \mathcal{M}_n}\exp\left(-\dfrac{br}{2}\sqrt{m^{\prime}}\right)  + m_{\max}\left(M^2+H^2\right)\mathbb{P}\left(\Omega^{* c}\right) + \dfrac{m_{\max}\log^4(n)}{n^{3/2}}\exp\left(-c\dfrac{n^{1/4}}{\log^2(n)}\right)\right)\\
       & =  \mathcal{O}\left(n^{-1}\right).
    \end{aligned}
\end{equation*}
We set $Q(m,m^{\prime}) = 2(1+2r)H^2$, from the above equation together with Equations~\eqref{eq:adaptive2-3} and \eqref{eq:adaptive2-2}, there exists a constant $C>0$ such that for $m \in \mathcal{M}_n = \{1, \ldots, m_{\max}\}$ with $m_{\max} = \left\lceil (n\Delta_n)^{2/(2s_0+1)}\right\rceil, ~ s_0 = D + d + 2$, and for $a_n = n^{1/4}\Delta_n$,
\begin{equation*}
    \begin{aligned}
        \sum_{m^{\prime} \in \mathcal{M}_n}\mathbb{E}\left[\left(\underset{f \in \mathcal{S}_{m \vee m^{\prime}}, ~ \|f\|_{\pi}^2 = 1}{\sup}{\nu_{n,1}^2(f)} - Q(m, m^{\prime})\right)_{+}\right] \leq &~ C\left(\dfrac{1}{n} + \dfrac{\Delta_n^8}{na_n^8}(m \vee m^{\prime})\left\|\Psi_{m \vee m^{\prime}}^{-1}\right\|_{\mathrm{op}}\right) \leq Cn^{-1}.
    \end{aligned}
\end{equation*}
\end{proof}
\begin{proof}[\textbf{Proof of Lemma~\ref{lm:lemma-polynomial}}]
    The proof of this lemma is identical to the one given for Lemma~\ref{lm:lemma-exponential}. We refer to it for clarity, thereby avoiding repetition. Then, we have the following:
\begin{multline}\label{eq:adaptive2}
    \mathbb{E}\left[\left(\underset{f \in \mathcal{S}_{m \vee m^{\prime}}, ~ \|f\|_{\pi}^2 = 1}{\sup}{\nu_{n,1}^2(f)} - Q(m, m^{\prime})\right)_{+}\right]\\
    \leq 2\mathbb{E}\left[\left(\underset{f \in \mathcal{S}_{m \vee m^{\prime}}, ~ \|f\|_{\pi}^2 = 1}{\sup}{\Lambda_{n,1}^2(f)} - \dfrac{1}{2}Q(m, m^{\prime})\right)_{+}\right] + 2\mathbb{E}\left[\underset{f \in \mathcal{S}_{m \vee m^{\prime}}, ~ \|f\|_{\pi}^2 = 1}{\sup}{\Lambda_{n,2}^2(f)}\right],
\end{multline}
where for each $f \in \mathcal{S}_{m \vee m^{\prime}}$,  $\Lambda_{n,1}(f)$ and $\Lambda_{n,2}(f)$ are given in Equation~\eqref{eq:Lambda-1-2}. For $a_n = n^{1/4}\Delta_n$ and by Equation~\eqref{eq:adaptive2-3},
\begin{equation}\label{eq:adaptive3}
    \begin{aligned}
        \mathbb{E}\left[\underset{f \in \mathcal{S}_{m \vee m^{\prime}}, ~ \|f\|_{\pi}^2 = 1}{\sup}{\Lambda_{n,2}^2(f)}\right]  \leq \dfrac{C\Delta_n^8}{na_n^8}(m \vee m^{\prime})\left\|\Psi_{m \vee m^{\prime}}^{-1}\right\|_{\mathrm{op}} \leq C\dfrac{(m \vee m^{\prime})\left\|\Psi_{m \vee m^{\prime}}^{-1}\right\|_{\mathrm{op}}}{n^3}.
    \end{aligned}
\end{equation}
Moreover, we set $Q(m,m^{\prime}) = 4(1+2r)H^2, ~~ m,m^{\prime} \in \mathcal{M}_n$, there exists a constant $C>0$ such that
\begin{multline}\label{eq:adaptive4}
    \mathbb{E}\left[\left(\underset{f \in \mathcal{S}_{m \vee m^{\prime}}, ~ \|f\|_{\pi}^2 = 1}{\sup}{\Lambda_{n,1}^2(f)} - \dfrac{1}{2}Q(m, m^{\prime})\right)_{+}\right] \\
    \leq \sum_{\delta \in \{0,1\}}\mathbb{E}\left[\left(\left\{\underset{f \in \mathcal{S}_{m \vee m^{\prime}}, ~ \|f\|_{\pi}^2 = 1}{\sup}{\left|\Lambda_{n,1}^{* \delta}(f)\right|^2} - 2(1+2r)H^2\right\}\mathds{1}_{\Omega^*}\right)_{+}\right] + C(M^2+H^2)\mathbb{P}\left(\Omega^{* c}\right).
\end{multline}
Using Talagrand's inequality on the first term on the right-hand side of Equation~\eqref{eq:adaptive4}, for $m \vee m^{\prime}$ large enough and for $m \in \mathcal{M}_n$, $q_n = \left\lceil (n\Delta_n)^{5/(r_0-1)}\Delta_n^{-1}\right\rceil, ~ p_n = n/(2q_n) \geq (1/4)(n\Delta_n)^{1 - 5/(r_0-1)}$ with $r_0 \geq 16$, and $n\Delta_n^2 = 1$, there exist constants $C, c>0$ such that
\begin{multline}\label{eq:adaptive5}
    \sum_{\delta \in \{0,1\}}\mathbb{E}\left[\left(\left\{\underset{f \in \mathcal{S}_{m \vee m^{\prime}}, ~ \|f\|_{\pi}^2 = 1}{\sup}{\left|\Lambda_{n,1}^{* \delta}(f)\right|^2} - 2(1+2r)H^2\right\}\mathds{1}_{\Omega^*}\right)_{+}\right] \leq \dfrac{C}{n}\exp\left(-\dfrac{br}{2}\sqrt{m^{\prime}}\right)\\
    + C\dfrac{(m \vee m^{\prime})\left\|\Psi_{m \vee m^{\prime}}^{-1}\right\|_{\mathrm{op}}}{n^{\frac{3}{2} - \frac{5}{r_0-1}}}\exp\left(-cn^{\frac{1}{4} - \frac{5}{2(r_0-1)}}\right).
\end{multline}
For the second term on the right-hand side of Equation~\eqref{eq:adaptive4} and from the proof of Lemma~\ref{lm:omega-comp}, 
\begin{equation}\label{eq:adaptive6}
    (M^2+H^2)\mathbb{P}\left(\Omega^{*c}\right) \leq \dfrac{C}{n^2}\left(\dfrac{(m \vee m^{\prime})\left\|\Psi_{m \vee m^{\prime}}^{-1}\right\|_{\mathrm{op}}}{\sqrt{n}} + \dfrac{(m \vee m^{\prime})\left\|\Psi_{m \vee m^{\prime}}^{-1}\right\|_{\mathrm{op}}}{n} \right).
\end{equation}
We deduce from Equations~\eqref{eq:adaptive6}, \eqref{eq:adaptive5}, \eqref{eq:adaptive4}, \eqref{eq:adaptive3} and \eqref{eq:adaptive2}, there exists a constant $C>0$ such that
$$\sum_{m^{\prime} \in \mathcal{M}_n}\mathbb{E}\left[\left(\underset{f \in \mathcal{S}_{m \vee m^{\prime}}, ~ \|f\|_{\pi}^2 = 1}{\sup}{\nu_{n,1}^2(f)} - Q(m, m^{\prime})\right)_{+}\right] \leq Cn^{-1}.$$
\end{proof}

\subsection{Proof of Theorem~\ref{thm:adaptation2}}

\begin{proof}
    Following the proofs of Theorem~\ref{thm:upper-limit2-NAE} and \ref{thm:adaptation}, we obtain
    \begin{equation*}
    \begin{aligned}
        \mathbb{E}\left[\left\|\w{\sigma}_{\w{m}}^2 - \sigma^2\right\|_n^2\right] \leq &~ 3\left\{\underset{f \in \mathcal{S}_{m,L}}{\inf}{\left\|f - \sigma^2\right\|_{\pi}^2} + \mathrm{pen}(m)\right\} + 2m_{\max}L\mathbb{P}\left(\Omega_{m_{\max}}^c\right) + 2\sqrt{\mathbb{E}\left[X_0^4\right]\mathbb{P}\left(\Omega_{m_{\max}}^c\right)}\\
        & + 12\sum_{m^{\prime} \in \mathcal{M}_n}\mathbb{E}\left[\left(\underset{f \in \mathcal{S}_{m \vee m^{\prime}}, ~ \|f\|_{\pi}^2 = 1}{\sup}{\nu_{n}^2(f)} - Q(m, m^{\prime})\right)_{+}\right] + C \Delta_n^{\alpha},
    \end{aligned}
    \end{equation*}
    where for all $f \in \mathcal{S}_{m \vee m^{\prime}}$, $\nu_n(f) = n^{-1}\sum_{k = 0}^{n-1}\zeta_{k\Delta_n}f(X_{k\Delta_n})$ with $\zeta_{k\Delta_n} = \Delta_n^{-1}\sigma^2(X_{k\Delta_n})[(W_{(k+1)\Delta_n} - W_{k\Delta_n})^2 - \Delta_n]$. Remark that
    $$\zeta_{k\Delta_n} = \dfrac{1}{\Delta_n}\left[\left(\int_{k\Delta_n}^{(k+1)\Delta_n}\sigma(X_{\eta(s)})dW_s\right)^2 - \int_{k\Delta_n}^{(k+1)\Delta_n}\sigma^2(X_{\eta(s)})ds\right],$$
    where $\eta(s) = k\Delta_n$ for all $s \in [k\Delta_n, (k+1)\Delta_n)$. Consequently, Lemmas~\ref{lm:lemma-exponential}, \ref{lm:lemma-polynomial} and \ref{lm:omega-comp} and Proposition~\ref{prop:Exp-Holder} lead to the final results.
\end{proof}

\appendix
\section*{Appendix}

\section{Proof of Lemma~\ref{lm:omega-comp}}

\begin{proof}
Recall that for all $f = \sum_{j=0}^{m-1}a_j\phi_j = \left<\mathbf{a}, (\phi_0, \dots, \phi_{m-1})\right> \in \mathcal{S}_m$, with $\mathbf{a} = (a_0, \ldots, a_{m-1}) \in \mathbb{R}^m$ and $m \geq 1$,
\begin{equation*}
    \|f\|_n^2 = \mathbf{a}^{\prime}\widehat{\Psi}_m\mathbf{a}, ~~ \|f\|_{\pi}^2 = \mathbf{a}^{\prime}\Psi_m\mathbf{a} = \left\|\Psi_m^{1/2}\mathbf{a}\right\|_2^2 = 1 \iff \mathbf{a} = \Psi_m^{-1/2}\mathbf{u}, 
\end{equation*}
where $\mathbf{u} = (u_0, \ldots, u_{m-1}) \in \mathbb{R}^m$ and $\|\mathbf{u}\|_2 = 1$. We deduce that 
\begin{equation*}
    \underset{f \in \mathcal{S}_m, ~ \|f\|_{\pi}^2 = 1}{\sup}{\left|\|f\|_n^2 - 1\right|} = \underset{\mathbf{u} \in \mathbb{R}^m}{\sup}{\left|\mathbf{u}^{\prime}\left(\Psi_m^{-1/2}\widehat{\Psi}_m\Psi_m^{1/2} - \mathrm{Id}_m\right)\mathbf{u}\right|} = \left\|\Psi_m^{-1/2}\widehat{\Psi}_m\Psi_m^{1/2} - \mathrm{Id}_m\right\|_{\mathrm{op}}.
\end{equation*}
From \cite{comte2021drift}, \textit{Proof of Proposition 6.1}, we have
\begin{equation}\label{eq:omega-comp}
    \mathbb{P}\left(\Omega_m^c\right) \leq 4m\exp\left(-\dfrac{c(1/2)p_n}{\mathcal{L}(m)\left\|\Psi_m^{-1}\right\|_{\mathrm{op}}}\right) + 2p_n\beta_X(q_n\Delta_n),
\end{equation}
where, for simplicity, $p_n$ and $q_n$ are two integers such that $n = 2p_nq_n$, and $c(u) = u + (1-u)\log(1-u)$.\\
Under Assumptions~\ref{ass:StrongAssDrift}, the diffusion process $X$ is exponentially ergodic. Moreover, for $m = (n\Delta_n)^{2/(2s+k_0)}$ with $s \geq s_0 = D + d + 2$ and by Proposition~\ref{prop:operator-norm-identity}, there exists a constant $C>0$ such that $\mathcal{L}(m)\left\|\Psi_m^{-1}\right\|_{\mathrm{op}} \leq C(n\Delta_n)^{\beta(s)}$, where $\beta(s) < 1$ (see also Equation~\eqref{eq:operator-norm-identity}). Setting $p_n = \sqrt{n}, ~ q_n = \sqrt{n}/2$ and given that $n\Delta_n^2 = \varepsilon_0\log^{4}(n)$ with $\varepsilon_0 \in (0,10^{-1})$, we obtain
$$ \dfrac{p_n}{\mathcal{L}(m)\left\|\Psi_m^{-1}\right\|_{\mathrm{op}}} \geq \dfrac{\sqrt{n}}{Cn^{\beta(s)/2}(n\Delta_n^2)^{\beta(s)/2}} = \dfrac{n^{(1-\beta(s))/2}}{C\varepsilon_0^{\beta(s)/2}\log^{2\beta(s)}(n)} \rightarrow \infty ~~ \mathrm{as} ~~ n \rightarrow \infty.$$
On the other hand, $q_n\Delta_n = (n\Delta_n^2)^{1/2}/2 = \varepsilon_0^{1/2}\log^2(n)/2$. From Equation~\eqref{eq:omega-comp}, we obtain the following:
\begin{equation*}
    \begin{aligned}
        \mathbb{P}(\Omega_m^c) = &~  \mathcal{O}\left((n\Delta_n)^{\frac{2}{2s+k_0}}\exp\left(-\dfrac{c(1/2)n^{(1-\beta(s))/2}}{C\sqrt{\varepsilon_0^{\beta(s)}}\log^{2\beta(s)}(n)}\right) + \sqrt{n}\exp\left(-\dfrac{\gamma}{2}\sqrt{\varepsilon_0}\log^2(n)\right)\right)\\
        = &~ \mathcal{O}\left(\exp\left(-\dfrac{\gamma}{4}\sqrt{\varepsilon_0}\log^2(n)\right)\right),
    \end{aligned}
\end{equation*}
Under Assumptions~\ref{ass:limit-case-elliptic}, the process $X$ is polynomially ergodic and for all $t \geq 0, ~ \beta_X(t) \leq C(1+t)^{1 - r_0}$ for $k = r_0 - 2$, where $C>0$ is a constant and $r_0 \geq 16$ is defined in Assumption~\ref{ass:limit-case-elliptic}. We set $q_n = \left\lceil (n\Delta_n)^{5/(r_0 - 1)}\Delta_n^{-1} \right\rceil$, then 
$p_n = \frac{n}{2q_n} \leq \frac{n\Delta_n}{2(n\Delta)^{5/(r_0-1)}} = \frac{1}{2}(n\Delta_n)^{(r_0-6)/(r_0-1)}$. Since $n\Delta_n^2 = 1$, we have $n\Delta_n = n^{1/2}$. It follows that
\begin{equation}\label{eq:omega-comp2}
    2p_n\beta_X(q_n\Delta_n) = \dfrac{C(n\Delta_n)^{\frac{r_0-6}{r_0-1}}}{(1+q_n\Delta_n)^{r_0 - 1}} \leq \dfrac{C(n\Delta_n)^{\frac{r_0-6}{r_0-1}}}{(n\Delta_n)^{5}} = \dfrac{C}{(n\Delta_n)^{\frac{4r_0+1}{r_0-1}}} < \dfrac{C}{(n\Delta_n)^{4}} = \dfrac{C}{n^{2}}.
\end{equation}
In addition, for $m = (n\Delta_n)^{2/(2s+d_0)}$ with $s \geq s_0 = 2d + 3$ and $\beta(s) < 1/2$, we obtain
\begin{equation}\label{eq:omega-comp3}
    \dfrac{p_n}{\mathcal{L}(m)\left\|\Psi_m^{-1}\right\|_{\mathrm{op}}} \geq \dfrac{(n\Delta_n)^{\frac{r_0-6}{r_0-1}}}{2C(n\Delta_n)^{\beta(s)}} > \dfrac{(n\Delta_n)^{\frac{r_0-6}{r_0-1}}}{2C(n\Delta_n)^{\frac{1}{2}}} = \dfrac{1}{2C}(n\Delta_n)^{\frac{r_0-11}{2(r_0-1)}} \rightarrow \infty ~~ \mathrm{as} ~~ n \rightarrow \infty.
\end{equation}
We deduce from Equations~\eqref{eq:omega-comp3}, \eqref{eq:omega-comp2} and \eqref{eq:omega-comp} that there exists a constant $C>0$ such that $\mathbb{P}(\Omega_m^c) \leq Cn^{-2}$.    
\end{proof}

\subsection{Proof of Corollary~\ref{cor:upper-limit-NAE}}

\begin{proof}
    Grant Assumptions~\ref{ass:drift}, \ref{ass:diffusion}, \ref{ass:NonDegeneracy} and \ref{ass:stationary} for the whole proof. For $L=\log(n)$ and for all $m \in \mathcal{M}_n$, there exists a constant $C>0$ such that
\begin{equation}\label{eq:pi-norm-risk}
    \begin{aligned}
        \mathbb{E}\left[\left\|\w{\sigma}_m^2 - \sigma_I^2\right\|_{\pi}^2\right] \leq &~ \mathbb{E}\left[\left\|\w{\sigma}_m^2 - P_m(\sigma^2)\right\|_{\pi}^2\mathds{1}_{\Omega_m}\right] + \left\|P_m(\sigma^2) - \sigma_I^2\right\|_{\pi}^2 + 2mL\mathbb{P}\left(\Omega_m^c\right) + 2\sqrt{\mathbb{E}\left[\sigma^4(X_0)\right]\mathbb{P}\left(\Omega_m^{c}\right)}\\
        \leq &~ \mathbb{E}\left[\left\|\w{\sigma}_m^2 - P_m(\sigma^2)\right\|_{n}^2\right] + \left\|P_m(\sigma^2) - \sigma_I^2\right\|_{\pi}^2 + 2mL\mathbb{P}\left(\Omega_m^c\right) + 2\sqrt{\mathbb{E}\left[\sigma^4(X_0)\right]\mathbb{P}\left(\Omega_m^{c}\right)}\\
        \leq &~ 2\mathbb{E}\left[\left\|\w{\sigma}_m^2 - \sigma_I^2\right\|_{n}^2\right] + 3\underset{f \in \mathcal{S}_{m,L}}{\inf}\left\|f - \sigma_I^2\right\|_{\pi}^2 + 2mL\mathbb{P}\left(\Omega_m^c\right) + 2\sqrt{\mathbb{E}\left[\sigma^4(X_0)\right]\mathbb{P}\left(\Omega_m^{c}\right)},
    \end{aligned}
\end{equation}
where $P_m(\sigma^2)$ is the orthogonal projection of $\sigma_I^2$ onto $\mathcal{S}_m$. Then, under Assumption~\ref{ass:StrongAssDrift} and by Theorem~\ref{thm:upper-limit-NAE}, Proposition~\ref{prop:Exp-Holder} and Lemma~\ref{lm:omega-comp}, there exists a constant $C>0$ such that
\begin{equation*}
    \begin{aligned}
        \mathbb{E}\left[\left\|\w{\sigma}_m^2 - \sigma_I^2\right\|_{\pi}^2\right] \leq &~ 9\underset{f \in \mathcal{S}_{m,L}}{\inf}{\left\|f - \sigma_I^2\right\|_{\pi}^2} + C\left(\dfrac{m^{k_0}}{n} + m\log(n)\exp\left(-\frac{\gamma}{8}\log^2(n)\right) + C\Delta_n^{2}\right),
    \end{aligned}
\end{equation*}
where $k_0 = D+d+3/2$, and under Assumption~\ref{ass:limit-case-elliptic}, by Theorem~\ref{thm:upper-limit-NAE} and Lemma~\ref{lm:omega-comp}, there exists a constant $C>0$ such that
\begin{equation*}
    \begin{aligned}
        \mathbb{E}\left[\left\|\w{\sigma}_m^2 - \sigma_I^2\right\|_{\pi}^2\right] \leq &~ 9\underset{f \in \mathcal{S}_{m,L}}{\inf}{\left\|f - \sigma^2\right\|_{\pi}^2} + C\left(\dfrac{m^{d_0}}{n} + \dfrac{m\log(n)}{n^{2}} + C\Delta_n^{2}\right), ~~ d_0=d+3/2.
    \end{aligned}
\end{equation*}
\end{proof}

\section{Calibration of numerical constants of penalty terms}

Recall that the penalty terms in the risk bounds of adaptive estimators of $\sigma^2$ are $\mathrm{pen}_1(m) = \tau_1 m/n$ for polynomially ergodic diffusion processes and $\mathrm{pen}_2(m) = \tau_2 m^{k_0}/n$ for exponentially ergodic diffusion processes, where $m \in \{1, \ldots, (n\Delta_n)^{2/(2s_0+1)}\}$, $k_0 = D+d+3/2$, and $s_0 = D+d+2$ (exponential ergodicity) or $s_0 = 2d+3$ (polynomial ergodicity). The constants $\tau_1>0$ and $\tau_2>0$ depend on the unknown diffusion coefficient $\sigma$ and are therefore not computable in practice, which justifies the numerical calibration experiment. The dimension $m$ is chosen in the set $\mathcal{M} = \{1, \ldots, 10\}$ and $\tau_1, \tau_2 \in \mathfrak{C} = \{10^{-4}, 10^{-3}, 5 \times 10^{-3}, 10^{-2}, 5 \times 10^{-2}, 0.1, 0.5, 1, 2, 3, 4, 5, 10, 20, 30, 40, 50, 100\}$. Finally, we choose $n$ in the set $\mathcal{N} = \{10 000, 50 000, 100000, 200000\}$.\\
For each chosen model, for each $c \in \mathfrak{C}$ and each $n \in \mathcal{N}$, repeat $100$ times the following steps:
\begin{enumerate}
    \item [(i)] Simulate $X^{\mathrm{train}} = (X_0, \ldots, X_{n\Delta_n})$ and $X^{\mathrm{test}} = (X_0, \ldots, X_{q\Delta_{q}})$ with $q= 10000$.
    \item [(ii)] For each $m \in \mathcal{M}$, compute $\widehat{\sigma}_m^2$ from the diffusion path $X^{\mathrm{train}}$ using Equations~\eqref{eq:Coordinate-a} and \eqref{eq:Coordinate-b}.
    \item [(iii)] Select the optimal dimension $\widehat{m}$ in the set $\mathcal{M}$ using Equation~\eqref{eq:select-dimension}.
    \item [(iv)] From the path $X^{\mathrm{test}}$, compute the empirical estimation risk $\|\widehat{\sigma}_{\widehat{m}}^2 - \sigma_I^2\|_n^2$.
\end{enumerate}
The risks of estimation are then deduced from the average values of $\|\widehat{\sigma}_{\widehat{m}}^2 - \sigma_I^2\|_n^2$ on the $100$ repetitions. In the sequel, we apply the above algorithm for the chosen models.

\subsection{Case of exponentially ergodic processes}

We set $\varepsilon_0 = 0.0001$ and consider the following models:
\begin{itemize}
    \item \textbf{Model 1}: $b(x) = -9x, ~~ \sigma(x) = \sqrt{1+x^2}$, 
    \item \textbf{Model 2}: $b(x) = 5/x - 9x, ~~ \sigma(x) = \sqrt{1+x^2}$,
    \item \textbf{Model 3}: $b(x) = 2-9x, ~~ \sigma(x) = 1+|x|$
\end{itemize}
From the above models, we have $d = 9$ and $D=1$. From Figure~\ref{fig:risk_expo} we can see that the estimation risks generally reach their minimum values for $\widehat{\tau}_1 = 10^{-4}$, which is the optimal value of this numerical constant in the set $\mathfrak{C}$.  

\begin{figure}
\centering
\includegraphics[width=0.5\textwidth]{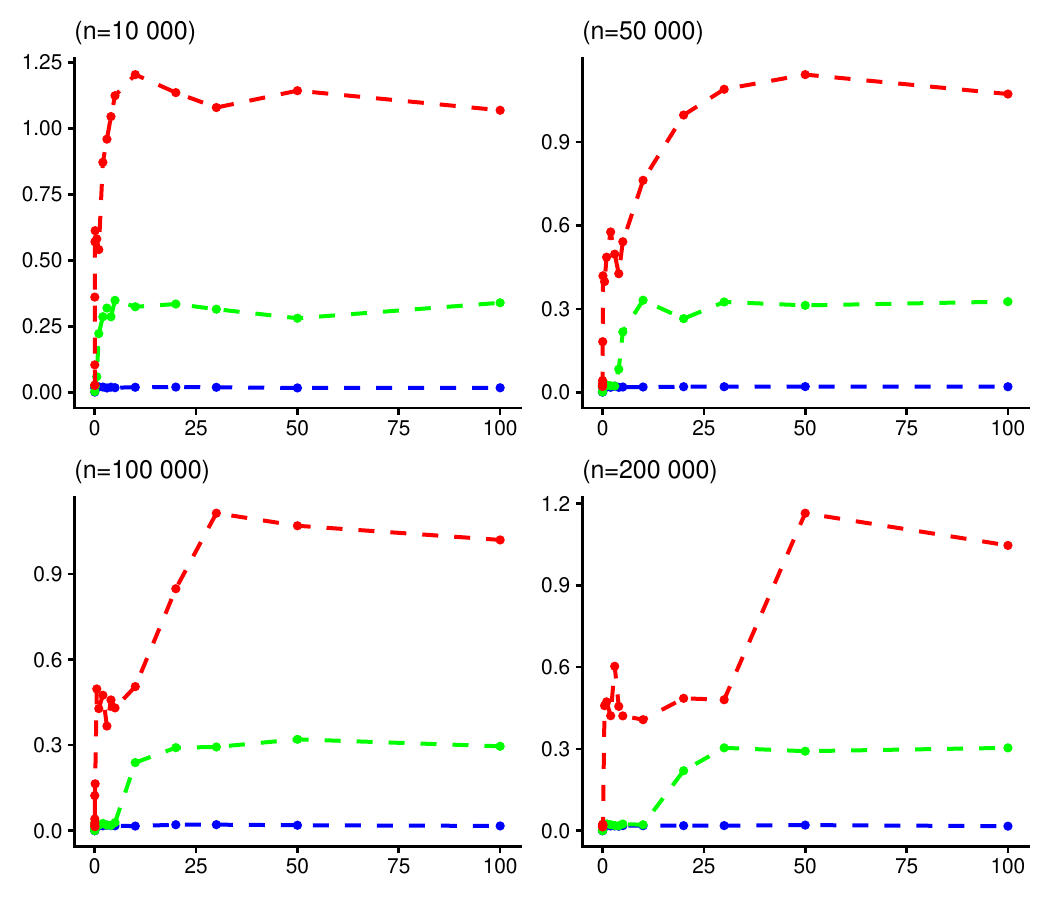}
\caption{The four graphs show the evolution of estimation risks of adaptive estimators $\widehat{\sigma}_{\widehat{m}}^2$ with respect to values of the numerical constant $\tau_1$ when $n \in \mathcal{\mathcal{N}}$.}
\label{fig:risk_expo}
\end{figure}

\subsection{Case of polynomially ergodic processes}

We consider the following models.
\begin{itemize}
    \item \textbf{Model 1}: $b(x) = -4.5x\left(3+\sqrt{1+x^2}\right)^2/9\left(1+x^2\right)^2, ~~ \sigma(x) = 1/3+1/\sqrt{1+x^2}$, 
    \item \textbf{Model 2}: $b(x) = -4.5x(2+\cos x)/(1+x^2), ~~ \sigma(x) = \sqrt{2+\cos x}$,
    \item \textbf{Model 3}: $b(x) = -5x/(3+x^2), ~~ \sigma(x) = 1$
\end{itemize}
From the above models, we have $d \in \{4.5, 5\}$. From the results obtained (see Figure~\ref{fig:risk_poly}), we have $\widehat{\tau}_2 = 100$.

\begin{figure}
\centering
\includegraphics[width=0.5\textwidth]{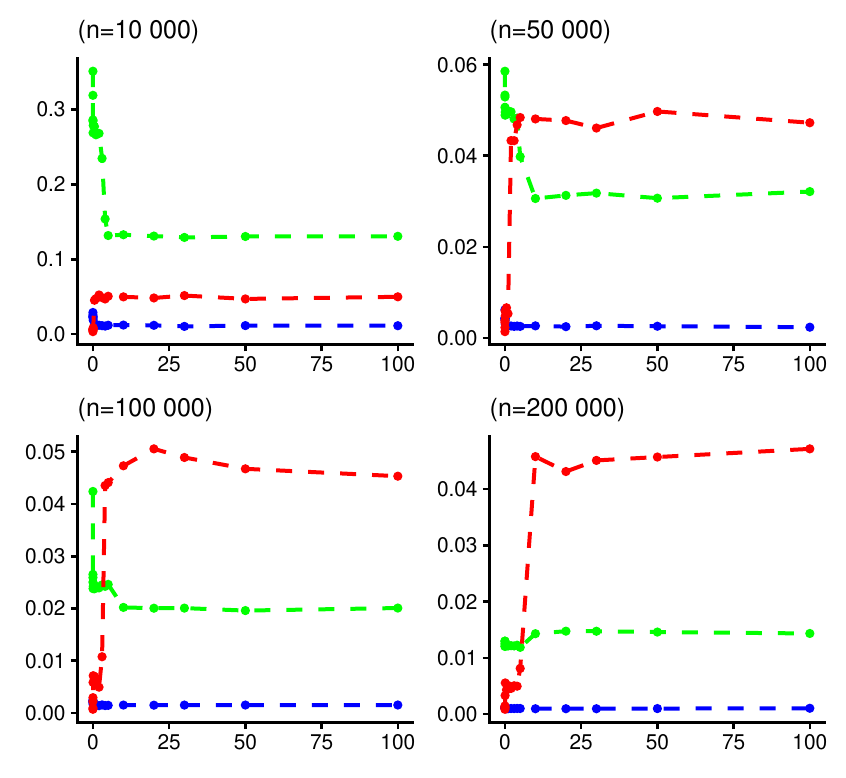}
\caption{The four graphs show the evolution of estimation risks of adaptive estimators $\widehat{\sigma}_{\widehat{m}}^2$ with respect to values of the numerical constant $\tau_2$ when $n \in \mathcal{\mathcal{N}}$.}
\label{fig:risk_poly}
\end{figure}


\section*{Declarations}

\paragraph{Conflict of interest} I have no conflict of interest to declare that is relevant to the content of this article. No funding was received to assist with the preparation of this document.

\bibliographystyle{ScandJStat}
\bibliography{mabiblio.bib}


\end{document}